\documentclass[letterpaper, 11pt,  reqno]{amsart}
\usepackage{comment}
\usepackage{bbm}
\usepackage{amsmath,amssymb,amscd,amsthm,amsxtra, esint}
\usepackage{amsfonts,latexsym}
\usepackage{eucal}
\usepackage{mathabx}
\usepackage{mathtools}
\usepackage{bbm}
\usepackage{cite}
\usepackage[margin=1.2in,marginparwidth=1.5cm, marginparsep=0.5cm]{geometry}

\usepackage[implicit=true]{hyperref}
\allowdisplaybreaks[2]

\newtheorem{theorem}{Theorem} [section]

\newtheorem{remark}[theorem]{Remark}

\newtheorem{definition}[theorem]{Definition}

\DeclareMathOperator*{\supp}{supp}

\newcommand{\Z}{\mathbb{Z}}
\newcommand{\R}{\mathbb{R}}
\newcommand{\N}{\mathbb{N}}
\newcommand{\C}{\mathcal{C}}
\newcommand{\T}{\mathbb{T}}

\let\Re=\undefined\DeclareMathOperator*{\Re}{Re}
\let\Im=\undefined\DeclareMathOperator*{\Im}{Im}

\newcommand{\les}{\lesssim}
\newcommand{\ges}{\gtrsim}

\newcommand{\cu}{\widecheck{u}}

\numberwithin{equation}{section}
\numberwithin{theorem}{section}

\usepackage{color}

\newtheorem{thm}{Theorem}[section]
\newtheorem{lem}[thm]{Lemma}
\newtheorem{prop}[thm]{Proposition}
\newtheorem{cor}[thm]{Corollary}
\newcommand{\abrac}[1]{\left\langle #1 \right\rangle}
\theoremstyle{definition}

\theoremstyle{remark}

\def\supp{\mathop{\rm supp}\nolimits}

\def\ind{\mathbbm{1}}
\def\indh{\mathbbm{1}_{T,R}^{\text{high}}}
\def\indl{\mathbbm{1}_{T,R}^{\text{low}}}

\renewcommand{\bar}[1]{\overline{#1}}

\newcommand{\del}{\partial}

\newcommand{\sgn}{{\mbox{sgn}}}

\newcommand{\Nmax}{N_{\text{max}}}

\renewcommand{\hat}{\widehat}
\renewcommand{\Tilde}{\widetilde}

\mathtoolsset{showonlyrefs}
\numberwithin{equation}{section}

\title{Unconditional well-posedness for the MMT equation  on the torus}

\author{Mahendra Panthee}
\address{Department of Mathematics, University of Campinas, Brazil}
\email{mpanthee@unicamp.br}
\author{James Patterson}
\address{School of Mathematics, University of Birmingham, UK}
\email{jxp277@student.bham.ac.uk}
\author{Yuzhao Wang}
\address{School of Mathematics, University of Birmingham, UK}
\email{y.wang.14@bham.ac.uk}

\begin{document}

\begin{abstract} 
We consider the initial value problem (IVP) for a two-parameter family of
derivative nonlinear Schr\"odinger equations on the torus, known as the Majda--McLaughlin--Tabak (MMT) model arising
in weak wave turbulence theory.  For positive derivative order, we show that
the flow map is not $C^3$ at the origin.  Using an enhanced energy method, we
prove unconditional local well-posedness in Sobolev spaces.  
At the energy regularity, conservation of the Hamiltonian and a
mass-type quantity yields unconditional global well-posedness.
\end{abstract}

\maketitle

\section{Introduction}
In this article, we consider the initial value problem (IVP) for the following two-parameter family of derivative nonlinear Schr\"odinger (NLS) equations
\begin{equation} \label{MMTEq}
\begin{cases}
i\del_t u + (-\partial_x^2)^{\frac{\alpha}{2}}u = \nu D_x^{2\beta} (|u|^2 u),\\
u(x, 0) = u_0(x),
\end{cases}\qquad x \in \T, t\in\R,
\end{equation}
where $u = u(x,t)$ is a complex function, $\alpha > 0$,  $\nu \in \{-1,1\}$, $\beta \geq 0$ and the operator $D_x^{2\beta}$ is defined via Fourier transform $\hat{D_x^{2\beta}u}(k) = |k|^{2\beta}\hat{u}(k)$. This equation is equivalent to the Majda--McLaughlin--Tabak (MMT) model
\begin{equation} \label{MMT}
\begin{cases}
i\del_t v + (-\partial_x^2)^{\frac{\alpha}{2}}v =\nu D_x^{\beta} (|D_x^\beta v|^2 D_x^\beta v),\\
v(x, 0) = v_0(x),
\end{cases}\qquad x \in \T, t\in\R,
\end{equation}
introduced in \cite{MMT-97} to test the predictions for weak wave turbulence theory on a computationally tractable system with a tunable dispersion and nonlinearity, see \cite{ZDP-04, ZGPD-01}. We can observe that the transformation 
\begin{equation} \label{MMTOldToNew}
    u(x,t) = D_x^\beta v(x,t),
\end{equation}
converts \eqref{MMT} into \eqref{MMTEq} with
$u_0=D_x^\beta v_0$.  When $\beta>0$, this transformation produces a
zero-mean function $u$, and the zero-mean subspace is invariant under
\eqref{MMTEq}.  Conversely, every zero-mean solution of \eqref{MMTEq}
determines a solution of \eqref{MMT} after fixing the constant zero mode of
$v$\footnote{In Fourier variables, the inverse is
$\widehat v(k)=|k|^{-\beta}\widehat u(k)$ for $k\ne0$, while
$\widehat v(0)=\widehat v_0(0)$ is constant in time.}.
Thus, for $\beta>0$, the MMT equation is equivalent to the zero-mean sector
of \eqref{MMTEq}, with Sobolev indices shifted by $\beta$ and with an
independent constant mode for $v$; when $\beta=0$, the transformation is the
identity.  We refer to \eqref{MMTEq} as the transformed MMT equation and use
it throughout the analysis.

In our previous work \cite{PPW-26} we considered the IVP \eqref{MMT} posed on $\R$ and obtained sharp well-posedness results for data in classical Sobolev spaces. However, in the periodic case, the method employed in \cite{PPW-26} is not suitable for $\beta > 0$. It is well known that the contraction mapping principle fails for a large class of nonlinear dispersive equations \cite{MST-01}; this includes the derivative nonlinear Schr\"odinger equation on $\T$ in its original form \cite{HT-99}. The same obstruction occurs for \eqref{MMTEq}, as shown by the counterexample in Section \ref{ill-p}. We instead use an enhanced energy method that combines frequency-dependent time localization, Strichartz estimates, and energy estimates; see \cite{IKT-08, KK-03, KT-07, KT-03, MolinetUncondKdV, MPV-18, MV-15, MolinetTanakaUncond} and the references therein.

\begin{definition} \label{DistributionalSolution}
Let $T>0$ and $s>\frac12$. We say that $u\in L^{\infty}([0, T]; H^s(\T))$ is a solution to the IVP \eqref{MMTEq} with initial data $u_0\in H^s(\T)$ if $u$ satisfies \eqref{MMTEq} in the distributional sense, i.e., 
$$\int_0^{T}\int_{\T} (\overline{i\partial_t\varphi+(-\Delta)^{\frac{\alpha}2}\varphi})u -  \nu (\overline{D_x^{2\beta} \varphi})|u|^2u \,dxdt +\int_{\T}\overline\varphi(\cdot, 0) u_0 \,dx = 0,$$
holds for any test function $\varphi\in C_0^{\infty}([0, T] \times \T)$.

\end{definition}

\begin{definition} \label{UUDef}
We  say that the IVP  \eqref{MMTEq} is unconditionally locally well-posed in $H^s(\T)$ if the following hold:
\begin{itemize}
\item For any initial data $u_0 \in H^s(\T)$ there exist $ T=T(\|u_0\|_{H^s})>0$ and a solution $u \in C([0, T]; H^s(\T))$ to \eqref{MMTEq} emanating from $u_0$.
\item The solution $u$ is unique in the class $L^{\infty}([0, T]; H^s(\T))$.
\item  For any $R>0$, the solution-map $u_0\mapsto u$ is continuous from a ball of $H^s(\T)$ with radius $R$ centered at the origin into $C([0, T(R)]; H^s(\T))$.

\end{itemize}

\end{definition}

Now, we are in position to state the first main result of this work.

\begin{thm} \label{MainTheorem}
Let $1 < \alpha \leq 2$, $0 \leq \beta  \leq \alpha/4$, and
\[
s \geq \max\left\{\frac32\beta + \frac{2-\alpha}{4},
\beta + 1 -\frac\alpha2, \frac12+\right\}.
\]
Then the IVP \eqref{MMTEq} is unconditionally locally well-posed for initial
data $u_0\in H^s(\T)$.
\end{thm}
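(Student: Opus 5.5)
We will follow the Molinet--Pilod--Vento / Molinet--Tanaka framework for unconditional uniqueness via an enhanced energy method. The key point is that any $L^\infty_T H^s$ solution, with $s>\frac12$, automatically lies in a short-time Bourgain-type space. We then close a priori estimates in that space, at the level of the solution and of the difference of two solutions.

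\textbf{Step 1: short-time structure.} We first show that if $u\in L^\infty([0,T];H^s)$ solves \eqref{MMTEq} distributionally, then $u\in C([0,T];H^{s-})$. Since $s>\frac12$ and $H^s$ is an algebra, we have $|u|^2u\in L^\infty_T H^s$, so $\partial_t u\in L^\infty_T H^{s-\max(\alpha,2\beta)}$. We then localize dyadically in frequency, $P_N u$, and in time on intervals of length $T_N\sim N^{-\theta}$, with $\theta$ to be chosen (roughly $\theta=2\beta-\dots$). On each such interval, Duhamel's formula shows that $P_Nu$ belongs to $X^{s,1/2}_{N,T_N}$, with norm controlled by $\|u\|_{L^\infty_TH^s}$ plus the nonlinear term. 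Combining this with the periodic Strichartz estimate for the dispersion $|k|^\alpha$ (loss $N^{\frac{2-\alpha}{4}}$-type via Bourgain/Burq--Gérard--Tzvetkov type arguments on intervals of length $N^{-\theta}$), we obtain a refined bound of the form
\[
\|P_N u\|_{L^4_T L^\infty}\lesssim N^{\gamma}\|P_Nu\|_{L^\infty_TL^2}+\dots .
\]
The resulting controlled quantity is $\|u\|_{L^2_TW^{s-\frac12-\epsilon+\dots,\infty}}$, or at least $\|u\|_{L^2_TL^\infty}$ with a gain. This step determines the threshold $\frac32\beta+\frac{2-\alpha}4$.

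\textbf{Step 2: energy estimates.} Next we estimate $\frac{d}{dt}\|P_Nu\|_{L^2}^2$. The symmetric structure of the nonlinearity $D^{2\beta}(|u|^2u)$ yields a commutator. We decompose the quartic term $\sum_{k_1-k_2+k_3=k}|k|^{2\beta}\hat u\bar{\hat u}\hat u\bar{\hat u}$ into two cases. In high$\times$low interactions we symmetrize to move $|k|^{2\beta}$ onto a difference $|k|^{2\beta}-|k_1|^{2\beta}$, using the Hamiltonian/mass structure, which is real-valued and cancels. In high$\times$high interactions we apply a modified energy: we add a correction term obtained by a normal-form integration by parts in time, using the resonance function
\[
\Omega=|k|^\alpha-|k_1|^\alpha+|k_2|^\alpha-|k_3|^\alpha,
\]
which satisfies $|\Omega|\gtrsim |k-k_1||k_1-k_2|N^{\alpha-2}$. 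The resulting quintic/sextic terms are then bounded using Step 1. This step gives the condition $s\ge\beta+1-\frac\alpha2$, and it uses $\beta\le\alpha/4$ so that the gain from $\Omega$ dominates the derivative loss.

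\textbf{Step 3: differences and conclusion.} We repeat Step 2 for $w=u-v$ in $H^{s'}$ with $s'<s$, or in a weighted $H^{-\dots}$ norm. This yields uniqueness in $L^\infty_TH^s$. Existence follows by compactness from smooth solutions, and continuity by the Bona--Smith argument with frequency envelopes.

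The main obstacle is Step 2 for the difference equation. There the symmetry is lost, so the correction term must be chosen asymmetrically, and the low-frequency derivative loss $|k|^{2\beta}$ has to be absorbed by the resonance. We will first try to handle this by working at regularity $s-\beta$ for the difference, exploiting the zero-mean/gauge structure to remove the $k_1=k$ resonant terms.
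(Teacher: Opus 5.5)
There is a genuine gap in Step~2, and Step~3 is left unresolved. Your overall architecture matches the paper's. A normal-form modified energy would also be a legitimate substitute for the paper's splitting $\ind_T=\indh+\indl$ plus modulation decomposition, since both give the same $N^{4\beta-\alpha}$ bookkeeping.

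\textbf{The high--low regime in Step~2.} This is the paper's Case~3: $N_1\sim N_2\gg N_3,N_4$, with a $u$ and a $\bar u$ at the top frequency. Symmetrization does \emph{not} make this contribution cancel. Only the diagonal terms are real ($k_1=k_2$ in the paper's labelling, $k=k_1$ in yours). What survives is the commutator symbol $B(k_1,k_2)=m(k_2)-m(k_1)$, with $m(k)=\phi_N^2(k)|k|^{2s+2\beta}$. It has size $N^{2s+2\beta-1}M$, where $M=|k_1-k_2|\lesssim N_{(3)}$.

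With only this one-derivative gain and $L^\infty_TH^s$ control (low factors in $L^\infty_x$ by Bernstein), the block costs $N^{2\beta-1}N_3^{\frac32-s}N_4^{\frac12-s}$. After summing $N_3\ll N$ this is $N^{2\beta+\frac12-s}$. You would therefore need $s\ge 2\beta+\frac12$, which is strictly above the theorem's threshold for every $\beta>0$. For example, with $\alpha=2$, $\beta=\frac13$ you would need $\frac76$ instead of $\frac12+$.

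The paper closes Case~3 only by combining the commutator with the weak resonance $|\Omega|\gtrsim N_{(1)}^{\alpha-1}M$. It takes $R=N_{(1)}^{\alpha-1-}M^{1+}\ll L=N_{(1)}^{\alpha-1}M$ and uses $u\in X^{s-2\beta,1}_T$ from Lemma~\ref{BourgainEstimates}. This turns $N^{2s+2\beta-1}M$ into $N^{2s+4\beta-\alpha}$. In your language, the correction term must cover these interactions too, not only the high--high ones; here $|B/\Omega|\lesssim N^{2s+2\beta-\alpha}$.

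\textbf{Step~3 (differences).} This step is the core of unconditional uniqueness. It is also the only place where $s\ge\beta+1-\frac\alpha2$ enters. You attribute that condition to the a priori estimate, but in the paper the a priori estimate needs only $s\ge\gamma(\alpha,\beta)$, $s>\frac12$ and $4\beta\le\alpha$. There, $\gamma$ comes from the all-comparable block, bounded by four $L^4$ norms from Corollary~\ref{StrichartzSecondary}.

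The term $D_x^{2\beta}(uv\bar w)$ has no commutator structure. Consider the block where $P_N^2D_x^{2\beta}\bar w$ and $u$ sit at the top frequency $N$ and $v,\bar w$ are low. There $|\Omega|\sim N^{\alpha-1}M$. Measuring $w$ in $H^{s'}$, the top-frequency count is
$N^{2s'+2\beta}\cdot N^{-s-s'}\cdot N^{1-\alpha}\cdot N^{2\beta}=N^{s'-s+4\beta+1-\alpha}$.
The second $N^{2\beta}$ is the price of the $X^{\cdot,1}$ norm (or of $\partial_t$ in a normal form) on a top-frequency factor. The factor $M^{-1}$ is absorbed by the low factors.

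Your tentative choice $s'=s-\beta$ leaves $N^{3\beta+1-\alpha}$, which is unbounded for $\frac{\alpha-1}{3}<\beta\le\frac\alpha4$. The paper instead takes $s'=s-1$. Then this block becomes $N^{4\beta-\alpha}$. The reverse block (weighted $\bar w$ low, $w$ and one of $u,v$ at the top) costs $N^{2\beta+2-\alpha-2s}$, which is exactly the condition $s\ge\beta+1-\frac\alpha2$. No gauge or zero-mean structure is needed: the resonant pieces are either real or bounded directly using $N^{2\beta-1}\lesssim 1$.
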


\begin{remark}
The result of Theorem \ref{MainTheorem} concerns unconditionally unique
solutions according to Definition \ref{UUDef}; there are already both conditional and unconditional
results for fNLS ($\beta = 0$). 
For both the real line $\R$ and torus $\T$
cases, Cho--Hwang--Kwon--Lee \cite{YGSS-WellandIll} (see also
\cite{DET-ExistTheoryfNLS} for the torus) proved conditional well-posedness for
$s \geq (2-\alpha)/4$, with sharpness on $\R$. Their argument uses the Fourier
restriction norm method and a contraction in an $X^{s,b}$ space; see
\eqref{BourgainDef}.
Kishimoto \cite{Kishimoto} proved unconditional well-posedness for $s\ge1/6$ and $s>(2-\alpha)/4$ by an infinite normal-form reduction.

We remark that the situation considered in Theorem \ref{MainTheorem} is much more involved than the real-line case obtained in \cite{PPW-26} as well as the $\beta = 0$ case in \cite{Kishimoto}. 
The argument in \cite{PPW-26} is
based on a contraction and therefore produces a $C^3$ flow map, whereas
Section \ref{ill-p} rules out that mechanism on the torus when $\beta>0$.
For the unconditional result, Kishimoto \cite{Kishimoto} considered the $\beta= 0$ case by an infinite normal-form reduction. 
Extending that argument to $\beta>0$ is substantially more difficult because each normal-form step must account for the multiplier $D_x^{2\beta}$. 
\end{remark}

For sufficiently smooth solutions, \eqref{MMTEq} has the Hamiltonian
\begin{equation}\label{Energy}
    H(u) := \int_\T \Big(|D_x^{\frac\alpha2 - \beta} u |^2 - \frac{\nu}{2}|u|^4 \Big) \,dx.
\end{equation}
It also has the conserved mass-type quantity
\begin{equation}\label{MassType}
\mathcal M_\beta(u):=
|\widehat u(0)|^2+\|D_x^{-\beta}P_{\ne0}u\|_{L^2}^2,
\end{equation}
which is equivalent to $\|u\|_{H^{-\beta}}^2$.  Under the transformation
\eqref{MMTOldToNew}, when $\beta>0$ one has
$\mathcal M_\beta(u)=\|P_{\ne0}v\|_{L^2}^2$; adding the independently
conserved quantity $|\widehat v(0)|^2$ gives the full $L^2$ mass of $v$.
When $\beta=0$, the transformation is the identity and
$\mathcal M_0(u)=\|v\|_{L^2}^2$.
The energy regularity is
\[
s_E:=\frac{\alpha}{2}-\beta.
\]
For this conservation law to imply global well-posedness directly from
Theorem \ref{MainTheorem}, the index $s_E$ must belong to the local
well-posedness range.  This gives
\[
\beta<\frac{\alpha-1}{2},
\qquad
\beta\le\frac{3\alpha-2}{10}.
\]
The argument below follows the standard energy-space continuation framework;
see, in particular, \cite[Section 6]{MolinetTanakaUncond}, where conservation
laws are first extended to energy-class solutions and are then combined with
a local theory whose lifespan is controlled by the energy norm.

\begin{thm}[Global well-posedness at the energy regularity]
\label{GlobalTheorem}
Let $1<\alpha\le2$.  Assume either
\begin{enumerate}
\item $\nu=-1$ and
\[
0\le\beta<\frac{\alpha-1}{2},
\qquad
\beta\le\frac{3\alpha-2}{10};
\]
\item $\nu=1$ and
\[
0\le\beta<\frac{\alpha-1}{4}.
\]
\end{enumerate}
Then the IVP \eqref{MMTEq} is unconditionally globally well-posed in
$H^{s_E}(\T)$, where $s_E=\frac{\alpha}{2}-\beta$.  More precisely, for every
$u_0\in H^{s_E}(\T)$ there is a unique solution
\[
u\in C(\R;H^{s_E}(\T)),
\]
and the solution map is continuous from bounded subsets of
$H^{s_E}(\T)$ into $C([-T,T];H^{s_E}(\T))$ for every $T>0$.
\end{thm}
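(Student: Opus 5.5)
We combine Theorem \ref{MainTheorem} at $s=s_E$ with an a priori bound on $\|u(t)\|_{H^{s_E}}$ coming from conservation of $H$ in \eqref{Energy} and $\mathcal M_\beta$ in \eqref{MassType}.

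\emph{Step 1: $s_E$ is admissible.} We check $s_E=\frac\alpha2-\beta$ lies in the range of Theorem \ref{MainTheorem}:
\begin{itemize}
\item $s_E>\frac12$ is equivalent to $\beta<\frac{\alpha-1}{2}$.
\item $s_E\ge \beta+1-\frac\alpha2$ is equivalent to $\beta\le\frac{\alpha-1}{2}$.
\item $s_E\ge\frac32\beta+\frac{2-\alpha}{4}$ is equivalent to $\beta\le\frac{3\alpha-2}{10}$.
\end{itemize}
Also $\beta\le\alpha/4$ holds, since $\frac{3\alpha-2}{10}\le\frac\alpha4$ for $\alpha\le2$. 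In case (2), $\beta<\frac{\alpha-1}{4}$ implies all three conditions, because $\frac{\alpha-1}{4}\le\frac{3\alpha-2}{10}$ exactly when $\alpha\le 3$. Hence we get unconditional local well-posedness in $H^{s_E}$ with existence time $T=T(\|u_0\|_{H^{s_E}})$, and this time is nonincreasing in the norm.

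\emph{Step 2: conservation for $H^{s_E}$ solutions.} For smooth data, $H$ and $\mathcal M_\beta$ are conserved by direct computation. Both functionals are continuous on $H^{s_E}$:
\begin{itemize}
\item the quadratic parts are continuous since $s_E\ge -\beta$;
\item the quartic part is continuous by $H^{s_E}\subset L^4$, as $s_E>\frac12$.
\end{itemize}
For $u_0\in H^{s_E}$, take smooth $u_{0,n}\to u_0$. By continuous dependence in Theorem \ref{MainTheorem}, $u_n\to u$ in $C([0,T];H^{s_E})$ on a common time interval, so conservation passes to the limit. Unconditional uniqueness guarantees that the limit is \emph{the} solution. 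This follows \cite[Section 6]{MolinetTanakaUncond}.

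\emph{Step 3: a priori bound.} We have $\|u\|_{H^{s_E}}^2\sim |\hat u(0)|^2+\|D_x^{s_E}u\|_{L^2}^2$, and the zero mode is controlled by $\mathcal M_\beta$.
\begin{itemize}
\item In case (1), $\nu=-1$ makes $H$ a sum of nonnegative terms. Thus $\|D_x^{s_E}u(t)\|_{L^2}^2\le H(u_0)$, and the bound is immediate.
\item In case (2) (focusing), we need the Gagliardo--Nirenberg-type inequality on $\T$
\[
\|u\|_{L^4}^4\lesssim \|u\|_{H^{-\beta}}^{4(1-\theta)}\|u\|_{H^{s_E}}^{4\theta},
\qquad \theta=\frac{\frac14+\beta}{s_E+\beta}=\frac{1+4\beta}{2\alpha}.
\]
This follows by interpolating $H^{1/4}\subset L^4$ between $H^{-\beta}$ and $H^{s_E}$. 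Subcriticality requires $4\theta<2$, i.e.\ $1+4\beta<\alpha$, which is exactly $\beta<\frac{\alpha-1}{4}$.
\end{itemize}
With subcriticality in hand, Young's inequality gives
\[
\|u\|_{\dot H^{s_E}}^2\le H(u_0)+\tfrac12\|u\|_{\dot H^{s_E}}^2+C(\mathcal M_\beta(u_0)),
\]
which yields a uniform-in-time bound $\|u(t)\|_{H^{s_E}}\le C(\|u_0\|_{H^{s_E}})$. The interpolation must be done carefully on the nonzero modes, with the zero mode handled separately.

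\emph{Step 4: iteration.} Because the local time depends only on the norm, Step 3 lets us iterate the local result with a fixed step $T_*$. This gives a global solution in $C(\R;H^{s_E})$, and negative times follow by time reversal. Uniqueness in $L^\infty_{loc}H^{s_E}$ follows from the local unconditional uniqueness applied on each subinterval. Continuity of the flow on $[-T,T]$ follows by composing finitely many local flow maps, each continuous on balls of radius $C(R)$.

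I expect the main obstacle to be Step 2. Continuous dependence must hold on a time interval uniform along the approximating sequence, and Theorem \ref{MainTheorem} provides this. A more delicate point is that the smooth data must produce solutions regular enough for the direct conservation computation. This should come from persistence of higher regularity, which the energy method typically provides.
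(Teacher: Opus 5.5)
Your proposal is correct and has the same structure as the paper's proof. Both check that $s_E$ lies in the range of Theorem \ref{MainTheorem}. Both extend conservation of $H$ and $\mathcal M_\beta$ to $H^{s_E}$ solutions by approximation. Both get coercivity from the defocusing sign, or in the focusing case from the subcritical interpolation with exponent $\frac{1+4\beta}{2\alpha}<\frac12$. Both then iterate with a step length fixed by the conserved quantities.

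The one substantive difference is in your Step 2, exactly at the point you flag as delicate. Approximating by smooth data means you must know that the solutions with data $u_{0,n}$ are regular enough on the whole common interval $[0,T(R)]$ to justify the direct computation. Theorem \ref{MainTheorem} applied at high regularity only gives lifespans depending on the high norms of $u_{0,n}$, and these blow up as $n\to\infty$. Persistence of regularity is not stated in the paper. It can be extracted from the frequency-envelope bound \eqref{uniformWeightedBound}, e.g.\ with $\omega_N=N$, but that is an extra argument you have not supplied.

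For $\beta>0$ the paper avoids the issue entirely. The Galerkin solutions of \eqref{Galerkin} are smooth finite-dimensional flows that satisfy both conservation identities exactly. This holds because $u_K$ and $\partial_t u_K$ lie in the range of the self-adjoint projection $\Pi_K$, and $\widehat{u_K}(0)$ is constant. Section \ref{sec-7} already proves $u_K\to u$ in $C([0,T];H^{s_E})$, so conservation passes to the limit with no regularity upgrade. The paper uses your smooth-data route only for $\beta=0$, where the contraction argument makes persistence immediate.

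Your other deviation is harmless. In the defocusing case you control the zero mode by $\mathcal M_\beta$, whereas the paper bounds $\|u\|_{L^2}$ by H\"older and the $L^4$ term of $H$; both work.

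There is also a small arithmetic slip: $\frac{\alpha-1}{4}\le\frac{3\alpha-2}{10}$ is equivalent to $\alpha\ge-1$, not $\alpha\le 3$. The conclusion you draw from it is unaffected.
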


\begin{proof}
We first verify that the energy index is covered by Theorem
\ref{MainTheorem}.  The condition $s_E>1/2$ is equivalent to
$\beta<(\alpha-1)/2$, while
\[
s_E\ge \beta+1-\frac{\alpha}{2}
\]
follows from the same restriction.  Moreover,
\[
s_E\ge \frac32\beta+\frac{2-\alpha}{4}
\quad\Longleftrightarrow\quad
\beta\le\frac{3\alpha-2}{10}.
\]
Moreover, since $1<\alpha\le2$,
\[
\frac{\alpha-1}{2}\le\frac{\alpha}{4}.
\]
Thus the strict condition $\beta<(\alpha-1)/2$ also implies the structural
restriction $\beta\le\alpha/4$ in Theorem \ref{MainTheorem}.  In the focusing
case, the stronger condition $\beta<(\alpha-1)/4$ implies all three
restrictions.
Thus Theorem \ref{MainTheorem} gives unconditional local well-posedness in
$H^{s_E}$.

We next prove conservation of \eqref{Energy} and \eqref{MassType}.  Let $u$
be a smooth solution.
When $\beta>0$, the zero Fourier mode of \eqref{MMTEq} is constant.  On the
nonzero modes, applying $D_x^{-2\beta}$ to the equation gives
\[
iD_x^{-2\beta}\partial_tu+D_x^{\alpha-2\beta}u
=\nu P_{\ne0}(|u|^2u).
\]
Since $\partial_t\widehat u(0)=0$, the projection $P_{\ne0}$ may be omitted
when paired with $\partial_tu$.  Consequently,
\begin{align*}
\frac{d}{dt}H(u(t))
&=2\Re\left\langle
D_x^{\alpha-2\beta}u-\nu|u|^2u,\partial_tu
\right\rangle_{L^2}\\
&=2\Re\left\langle
-iD_x^{-2\beta}\partial_tu,\partial_tu
\right\rangle_{L^2}=0.
\end{align*}
Taking instead the imaginary part of the $L^2$ pairing of this equation with
$u$ gives
\[
\begin{split}
0
&=\Im\left\langle iD_x^{-2\beta}\partial_tu,u\right\rangle_{L^2}\\
&=\Re\left\langle D_x^{-2\beta}\partial_tu,u\right\rangle_{L^2}
=\frac12\frac{d}{dt}
\|D_x^{-\beta}P_{\ne0}u(t)\|_{L^2}^2,
\end{split}
\]
because both
$\langle D_x^{\alpha-2\beta}u,u\rangle_{L^2}$ and
$\langle |u|^2u,u\rangle_{L^2}$ are real.  Together with the constancy of
$\widehat u(0)$, this proves conservation of \eqref{MassType}.
For $\beta=0$, the same computation applies with $D_x^{-2\beta}$ equal to
the identity.  To pass from smooth solutions to energy-class solutions, we
argue as in \cite[Section 6]{MolinetTanakaUncond}.  When $\beta>0$, the
Galerkin approximations \eqref{Galerkin} obey the same identities: since
$u_K$ and $\partial_tu_K$ lie in the range of the self-adjoint projection
$\Pi_K$, the projection may be removed in each of the pairings above.
Moreover, $u_K\to u$ in $C([0,T];H^{s_E})$.  When $\beta=0$, one instead
approximates the initial datum by smooth functions and uses the continuous
dependence supplied by the contraction argument in Section \ref{sec-7}.
Since $s_E>1/2$, the maps
\[
u\longmapsto \|D_x^{s_E}u\|_{L^2}^2
\quad\text{and}\quad
u\longmapsto\|u\|_{L^4}^4,
\qquad
u\longmapsto\mathcal M_\beta(u)
\]
are continuous on $H^{s_E}(\T)$.  Passing to the limit therefore shows that
the local solution constructed in Section \ref{sec-7} satisfies
\[
H(u(t))=H(u_0),
\qquad
\mathcal M_\beta(u(t))=\mathcal M_\beta(u_0)
\]
throughout its interval of existence.

For the defocusing sign $\nu=-1$, the Hamiltonian is directly coercive:
\[
H(u)=\|D_x^{s_E}u\|_{L^2}^2+\frac12\|u\|_{L^4}^4.
\]
By H\"older's inequality on $\T$,
\[
\|u\|_{L^2}^2
\le (2\pi)^{1/2}\|u\|_{L^4}^2
\le 2\sqrt{\pi H(u_0)}.
\]
It follows that, for every time in the local existence interval,
\begin{equation}\label{EnergyCoercivity}
\|u(t)\|_{H^{s_E}}^2
\lesssim H(u_0)+2\sqrt{\pi H(u_0)}.
\end{equation}
For the focusing sign $\nu=1$, assume $\beta<(\alpha-1)/4$.  Sobolev
embedding and interpolation give
\begin{equation}\label{L4bound}
\|u\|_{L^4}
\lesssim \|u\|_{H^{1/4}}
\lesssim
\|u\|_{H^{-\beta}}^{1-\vartheta}
\|u\|_{H^{s_E}}^\vartheta,
\qquad
\vartheta=\frac{4\beta+1}{2\alpha}<\frac12.
\end{equation}
By conservation of \eqref{MassType} and Young's inequality, for every
$\varepsilon>0$,
\[
\begin{split}
\|u(t)\|_{L^4}^4
&\lesssim
\mathcal M_\beta(u_0)^{2(1-\vartheta)}
\|u(t)\|_{H^{s_E}}^{4\vartheta}\\
&\le \varepsilon\|u(t)\|_{H^{s_E}}^2
+C_\varepsilon
\mathcal M_\beta(u_0)^{\frac{2(1-\vartheta)}{1-2\vartheta}}.
\end{split}
\]
Since $\mathcal M_\beta(u_0)$ controls the zero mode and
$\|D_x^{s_E}u\|_{L^2}$ controls every nonzero mode,
\[
\begin{split}
\|u(t)\|_{H^{s_E}}^2
&\lesssim
\mathcal M_\beta(u_0)+\|D_x^{s_E}u(t)\|_{L^2}^2\\
&=\mathcal M_\beta(u_0)+H(u_0)
+\frac12\|u(t)\|_{L^4}^4\\
&\le C\big(\mathcal M_\beta(u_0)+|H(u_0)|\big)
+C\varepsilon\|u(t)\|_{H^{s_E}}^2
+C_\varepsilon
\mathcal M_\beta(u_0)^{\frac{2(1-\vartheta)}{1-2\vartheta}}.
\end{split}
\]
Choosing $\varepsilon>0$ so that the penultimate term can be absorbed into
the left-hand side gives
\begin{equation}\label{FocusingEnergyCoercivity}
\sup_t\|u(t)\|_{H^{s_E}}^2
\le C\big(|H(u_0)|,\mathcal M_\beta(u_0)\big)
\end{equation}
on every local existence interval.  Notice that
$\beta<(\alpha-1)/4$ implies all the local-theory restrictions verified
above.

The local existence time in Theorem \ref{MainTheorem} depends only on the
$H^{s_E}$ norm of the data.  The uniform bounds
\eqref{EnergyCoercivity} and \eqref{FocusingEnergyCoercivity}, in their
respective cases,
therefore allow us to restart the local theory after a time interval whose
length is bounded below solely in terms of the conserved quantities.
Iteration gives a
solution for all positive and negative times.  Unconditional uniqueness and
continuous dependence on every compact time interval follow by iterating the
corresponding local statements finitely many times.  This is the same
continuation principle used in \cite[Section 6]{MolinetTanakaUncond}.
\end{proof}

\begin{remark}
The Hamiltonian controls only $H^{s_E}$.  It therefore gives the global
result above at the energy regularity, but by itself it does not preclude
growth of an $H^s$ norm when $s>s_E$. 
\end{remark}

Finally, reversing \eqref{MMTOldToNew} gives the following consequence for
the MMT model \eqref{MMT}.
\begin{thm}
Let $1<\alpha\le2$ and $0\le\beta\le\alpha/4$.  If
\[
s\ge
\max\left\{
\frac52\beta+\frac{2-\alpha}{4},
2\beta+1-\frac{\alpha}{2},
\beta+\frac12+
\right\},
\]
then the IVP \eqref{MMT} is unconditionally locally well-posed for initial
data $v_0\in H^s(\T)$.
\end{thm}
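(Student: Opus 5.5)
The plan is to deduce this statement directly from Theorem \ref{MainTheorem} via the transformation \eqref{MMTOldToNew}, with indices shifted by $\beta$. Given $v_0\in H^s(\T)$, set $u_0=D_x^\beta v_0$. The Fourier multiplier $|k|^\beta$ kills the zero mode, so $u_0$ has zero mean and $\|u_0\|_{H^{s-\beta}}\lesssim\|v_0\|_{H^s}$. Conversely, on zero-mean functions $D_x^{-\beta}$ is bounded from $H^{s-\beta}$ to $H^s$. Subtracting $\beta$ from each entry of the maximum gives
\[
s-\beta\ge\max\Big\{\tfrac32\beta+\tfrac{2-\alpha}{4},\ \beta+1-\tfrac\alpha2,\ \tfrac12+\Big\},
\]
which is exactly the hypothesis of Theorem \ref{MainTheorem} at regularity $s-\beta$. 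That theorem then gives a solution $u\in C([0,T];H^{s-\beta})$ of \eqref{MMTEq}, with $T=T(\|u_0\|_{H^{s-\beta}})$. This $u$ has zero mean, because the right-hand side $D_x^{2\beta}(|u|^2u)$ has vanishing zero mode when $\beta>0$. We then define $v$ by $\widehat v(k)=|k|^{-\beta}\widehat u(k)$ for $k\neq0$ and $\widehat v(0)=\widehat v_0(0)$. Testing the distributional formulation of Definition \ref{DistributionalSolution} against $D_x^{\beta}\varphi$ for $k\ne0$, and separately against the constant mode, shows that $v\in C([0,T];H^s)$ solves \eqref{MMT} in the distributional sense. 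When $\beta=0$ the statement is literally Theorem \ref{MainTheorem}.

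For unconditional uniqueness, let $\tilde v\in L^\infty([0,T];H^s)$ be any distributional solution of \eqref{MMT} with data $v_0$. Testing against constants shows that $\widehat{\tilde v}(0)$ is constant in time, because the nonlinearity is of the form $D_x^\beta(\cdot)$. The function $\tilde u=D_x^\beta\tilde v$ lies in $L^\infty([0,T];H^{s-\beta})$, and it solves \eqref{MMTEq} distributionally; to see this, take a test function $\varphi$ for \eqref{MMTEq} and use $D_x^\beta\varphi$ as the test function for \eqref{MMT}. The uniqueness part of Theorem \ref{MainTheorem} then gives $\tilde u=u$. Since the zero modes also agree, $\tilde v=v$.

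Continuity of the solution map follows by composing three continuous maps: $v_0\mapsto D_x^\beta v_0$, which maps a ball of radius $R$ in $H^s$ into a ball of radius $R$ in $H^{s-\beta}$; the data-to-solution map of Theorem \ref{MainTheorem}; and the inverse map $u\mapsto v$ together with the constant zero mode, which is Lipschitz from $C([0,T];H^{s-\beta}_0)\times\mathbb C$ to $C([0,T];H^s)$.

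The only step needing care is checking that distributional solutions correspond under the transformation. In particular, $D_x^{\pm\beta}\varphi$ is no longer compactly supported in $x$, but on $\T$ it remains smooth, and that is all the pairing requires. One must also check that the mean-zero property is preserved, so that inverting $D_x^\beta$ loses nothing. Beyond this, the proof is bookkeeping of the index shift.
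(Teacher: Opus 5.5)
Your proposal is correct and follows the paper's approach: the paper presents this theorem as an immediate consequence of reversing \eqref{MMTOldToNew}. That means applying Theorem \ref{MainTheorem} at regularity $s-\beta$ on the zero-mean sector and carrying the constant zero mode of $v$ separately, with your write-up supplying the details the paper leaves implicit: the index shift, the constancy of $\widehat v(0)$, and the transfer of distributional solutions, uniqueness and continuity through the smooth test functions $D_x^{\pm\beta}\varphi$.
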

We also have the following global well-posedness.
\begin{cor}
\label{GlobalMMT}
Let $1<\alpha\le2$, and assume either
\begin{enumerate}
\item $\nu=-1$ and
\[
0\le\beta<\frac{\alpha-1}{2},
\qquad
\beta\le\frac{3\alpha-2}{10};
\]
\item $\nu=1$ and
\[
0\le\beta<\frac{\alpha-1}{4}.
\]
\end{enumerate}
Then the MMT equation \eqref{MMT} is unconditionally globally well-posed in
$H^{\alpha/2}(\T)$.
\end{cor}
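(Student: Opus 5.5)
The plan is to transfer Theorem \ref{GlobalTheorem} to \eqref{MMT} through the transformation \eqref{MMTOldToNew}, treating the zero mode of $v$ separately.

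When $\beta=0$ the transformation is the identity and $s_E=\alpha/2$. The corollary is then exactly Theorem \ref{GlobalTheorem}, and nothing more needs to be shown.

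For $\beta>0$ we split $v_0=\widehat v_0(0)+P_{\ne0}v_0$ with $v_0\in H^{\alpha/2}(\T)$, and set $u_0:=D_x^\beta v_0$. This $u_0$ is a zero-mean function, and since $\|u_0\|_{H^{s_E}}\sim\|P_{\ne0}v_0\|_{H^{\alpha/2}}$ with $s_E=\alpha/2-\beta$, it lies in $H^{s_E}(\T)$. The hypotheses on $(\alpha,\beta,\nu)$ are identical to those of Theorem \ref{GlobalTheorem}, which therefore gives a unique global solution $u\in C(\R;H^{s_E})$. This solution keeps zero mean, because the zero Fourier mode of \eqref{MMTEq} is constant.

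We then define $v$ through $\widehat v(k,t)=|k|^{-\beta}\widehat u(k,t)$ for $k\ne0$ and $\widehat v(0,t)=\widehat v_0(0)$. The map $P_{\ne0}H^{s_E}\to P_{\ne0}H^{\alpha/2}$, $u\mapsto D_x^{-\beta}u$, is an isometric isomorphism up to constants, so $v\in C(\R;H^{\alpha/2})$. To see that $v$ solves \eqref{MMT} distributionally, note that $D_x^\beta v=u$, so the nonlinearity of \eqref{MMT} equals $\nu D_x^\beta(|u|^2u)$, which lies in $L^\infty_tH^{-\beta}$ since $|u|^2u\in H^{s_E}$ (an algebra, $s_E>1/2$). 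Testing \eqref{MMT} against $\varphi$ splits into the zero mode and the nonzero modes. On the zero mode the identity reads $\partial_t\widehat v(0)=0$, which holds because $D_x^\beta$ annihilates constants. On the nonzero modes, testing with $D_x^{-\beta}P_{\ne0}\varphi$ turns the weak formulation of \eqref{MMTEq} into that of \eqref{MMT}. This only requires the multipliers to commute with the dispersion and to be self-adjoint on zero-mean test functions.

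The main point is unconditional uniqueness. Let $\tilde v\in L^\infty([0,T];H^{\alpha/2})$ be any distributional solution of \eqref{MMT} with data $v_0$. Testing with constant-in-space $\varphi$ shows that $\widehat{\tilde v}(0,t)$ is constant, equal to $\widehat v_0(0)$, since the right-hand side has zero mean. Setting $\tilde u:=D_x^\beta\tilde v\in L^\infty([0,T];H^{s_E})$ and testing \eqref{MMT} with $D_x^{\beta}\varphi$, where $\varphi$ is zero-mean (after projecting a general test function), shows that $\tilde u$ is a distributional solution of \eqref{MMTEq} in the sense of Definition \ref{DistributionalSolution}. The zero mode of this formulation is trivially satisfied, because the zero mode of $\tilde u$ vanishes and $D_x^{2\beta}$ kills constants. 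The uniqueness part of Theorem \ref{GlobalTheorem}, iterated on time intervals, then gives $\tilde u=u$, and hence $\tilde v=v$.

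Continuous dependence follows from the bound
\[
\|v-w\|_{H^{\alpha/2}}\lesssim|\widehat v_0(0)-\widehat w_0(0)|+\|D_x^\beta v-D_x^\beta w\|_{H^{s_E}}
\]
together with the continuity of the solution map in Theorem \ref{GlobalTheorem} on bounded sets. The resulting solution map is in fact affine-trivial in the zero mode.
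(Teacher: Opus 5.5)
Your proposal is correct and follows the same route as the paper. You conjugate by $D_x^\beta$ on the nonzero modes via \eqref{MMTOldToNew}, keep the zero mode of $v$ constant in time, and read off existence, uniqueness and continuous dependence from Theorem \ref{GlobalTheorem}. Where the paper only sketches this transfer, you check explicitly that the weak formulations correspond (testing with $D_x^{-\beta}P_{\ne0}\varphi$ and with $D_x^{\beta}\varphi$), which is exactly what carries unconditional uniqueness from $L^\infty_T H^{s_E}$ over to $L^\infty_T H^{\alpha/2}$.
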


\begin{proof}
If $\beta=0$, then $u=v$, and the conclusion is exactly Theorem
\ref{GlobalTheorem}.  If $\beta>0$, set $u=D_x^\beta v$.  Theorem
\ref{GlobalTheorem} gives global control of $u$ in
$H^{\alpha/2-\beta}$, which is equivalent to control of the nonzero Fourier
modes of $v$ in $H^{\alpha/2}$.  In this case the zero mode of $v$ is constant
in time, so the conclusion follows from the inverse transformation described
after \eqref{MMTOldToNew}.
\end{proof}

\begin{remark}
It is worth mentioning the work in \cite{KKO-25}, where the authors established a necessary and sufficient condition on
the nonlinearity for the well-posedness of derivative fractional nonlinear
Schr\"odinger equations, extending earlier results in \cite{KO-25} for
semilinear Schr\"odinger equations on $\T$. The modified energy method in
\cite{KKO-25} treats polynomial nonlinearities containing derivatives of at
most first order. It is not immediate that this method extends to the
fractional derivative in \eqref{MMTEq}.
\end{remark}

\begin{remark}
It is useful intuition when working with equations of this form to make a comparison to the work of \cite{TKMO-26}. Through methods of norm inflation it is demonstrated that on $\T$ the equation
\begin{equation}
i\del_t u -D_x^\alpha u = u^k D_x^\beta u
\end{equation}
is ill-posed for all $\beta > 0$, even in the sense of definition \ref{UUDef}. We expect that a modification of their proof could also demonstrate a similar ill-posedness for $\beta > 0$ for
\begin{equation}
i\del_tu + (-\del_x^2)^{\alpha/2}u = D_x^{2\beta}(u^3).
\end{equation}
The Fourier half-space plays a crucial role in the argument of
\cite{TKMO-26}.  Such a restriction is unavailable for
$D_x^{2\beta}(|u|^2u)$, because both $u$ and $\overline u$ occur.
Consequently, that argument does not apply directly to \eqref{MMTEq}.
\end{remark}

\medskip
  
\noindent
{\bf Organisation of the paper:} 
The remainder of this work is organised as follows. 
Section~\ref{prelim} introduces the function spaces and preliminary estimates needed throughout the paper. 
In Section \ref{ill-p} we provide a counterexample to $C^3$ regularity of
the flow map. In Section~\ref{sec-4}, we recall properties of the resonance
relation and prove Strichartz estimates.
Section~\ref{sec-5} is devoted to obtaining an {\em a priori} estimate for the solution while in Section \ref{sec-6} we find an estimate for the difference of the solutions. 
Proof of the main result is provided in Section~\ref{sec-7}.  \\

\section{Preliminaries}\label{prelim}

In this section, we introduce definitions, notations and preliminary results. For $x,y \in \R$, we write $x \les y$ to denote there exists a constant $C > 0$ such that $x \leq Cy$; when $0 < C < 1$ we further write $x \ll y$. When $x \les y$ and $y \les x$ we write $x \sim y$.
It will become necessary to sum over dyadic numbers, when doing so, we will capitalize the relevant index, for example $N, L \in 2^{\Z}$. Lastly, for $\varepsilon > 0$ taken arbitrarily small, we will often write $x +$ as shorthand for $x+ \varepsilon$. $x-$ is analogously defined.

For the Fourier transform of $f \in \mathcal{S}'$ we follow the usual convention writing $\hat{f}$ (or $\mathcal{F}[f]$ when appropriate) to denote the Fourier transform
\begin{equation}
\hat{f}(k,\tau) := \int_{\R} \int_\T e^{- ixk} e^{-it\tau} f(x,t) \, dxdt.
\end{equation}
$\mathcal{F}_x, \mathcal{F}_t$ represent the Fourier transform only in their respective variables.

Let $\varphi \in C^\infty_c(\R)$ be such that $\varphi : \R \rightarrow [0,1]$ is a smooth radial cut-off function for which $\varphi(k) \equiv 1$ for $|k| \leq 1$ and $\supp \varphi \subseteq [-2,2]$. We then define the smooth annulus cut-off function $\phi:\R \rightarrow[0,1]$ by $\phi(k) := \varphi(|k|/2) - \varphi(|k|) $, which we extend to higher scales with $\phi_N(k):= \phi(k/N)$; in this way $\phi_N$ is supported around $|k| \sim N$. For $N \geq 1$ we then define $P_N$ as $\hat{P_Nf}(k) = \phi_N(k)\hat{f}(k)$, the frequency projection down to frequencies $|k| \sim N$; furthermore we define $P_0f = \hat{f}(0)$.
Sobolev space $H^s_x$ on the torus $\T$ is then defined as
\begin{equation}
\|u\|_{H^s_x} := \bigg( \sum_{\xi \in \Z} \abrac{k}^{2s}|\hat{u}(k)|^2 \bigg)^{1/2} \sim \bigg( \sum_{\substack{N\in 2^{\N_0}}} N^{2s}\|P_{N}u\|_{L^2_x}^2\bigg)^{1/2} + \|P_0 u\|_{L^2_x}
\end{equation}
The Sobolev space on $\R$ is defined analogously, where $H^s_t$ denotes the Sobolev space in the $t$ variable. Note when $H^s$ appears with no subscript, this will always refer to $H_x^s$. 
We let $S(t)$ denote the linear propagator of \eqref{MMTEq}. That is
\begin{equation} \label{LinProp} 
\mathcal{F}_x[S(t)u_0](k) := e^{it|k|^\alpha}\hat{u}_0(k).
\end{equation}
To make use of the modulation, for $s, b\in \R$, we introduce the well-known Fourier restriction norm method $X^{s,b}$ with norm defined by
\begin{equation} \label{BourgainDef}
\|u\|_{X^{s,b}} := \|S(-t)u\|_{H^b_t H^s_x} \sim \bigg(\sum_{L \geq 1} L^{2b} \|Q_L u \|_{L^2_t H^s_x}^2 \bigg)^{1/2}
\end{equation}
where $Q_L$ is the modulation projection. For $L > 1$ this is defined as $\hat{Q_L u}(k,\tau) = \phi_L(\tau-|k|^\alpha) \hat{u}(k, \tau)$, while
\begin{equation}
    \hat{Q_1u}(k,\tau) := \sum_{0 < L \leq 1} \phi_L(\tau - |k|^\alpha)\hat{u}(k,\tau).
\end{equation}
We also define
$Q_{ * K} := \sum_{L * K}Q_L$ where $*$ is any relational operator.

A valuable tool in demonstrating the continuity of the solution map with respect to initial data is the following weighted Sobolev space. For each $N \in 2^\N$, we define the frequency envelope $\omega_N$ as a dyadic sequence satisfying $1 \leq \omega_N \leq \omega_{2N} \leq \delta \omega_N$ for some $1 < \delta \leq 2$.

We then define weighted Sobolev space $H^s_\omega$ through the norm
\begin{equation}\label{freqEnvelope}
\|u\|_{H^s_\omega} := \left(\sum_{N \geq 1} \omega_N^2 N^{2s}\|P_N u\|_{L^2_x}^2 \right)^{\frac{1}{2}}+ \|P_0u\|_{L^2_x}.
\end{equation}
This space will be included in many estimates, though in most proofs we assume $\omega_N \equiv 1$ as the proof does not change. We similarly define $X^{s,b}_\omega$ through the norm
\begin{equation}
\| u\|_{X^{s,b}_\omega} := \|S(-t)u\|_{H^b_t H^s_\omega}.
\end{equation}
The weighted space $H^s_\omega$ is used to prevent concentration of the
Sobolev norm at arbitrarily high frequencies. Such spaces were used in
\cite{KT-03} as an alternative to a Bona--Smith argument.

We will need to consider Fourier restriction norm method localised to the time interval $[0,T]$. In general, for $B$ a normed space of space-time functions, we define $B_T$ as the restriction space
\begin{equation}
\|u\|_{B_T} := \inf\{ \|v\|_{B} : u(t) = v(t) \text{ for } t\in[0,T] \}.
\end{equation}
For the restriction of a Fourier restriction norm method in the presence of a frequency envelope as in \eqref{freqEnvelope}, we further write $X^{s,b}_{\omega,T}$.
The value of these Fourier restriction norm methods, is that they introduce a simple way to exploit modulation for derivative gain. Furthermore, they behave well when applied to the Duhamel formulation, which for \eqref{MMTEq} is
\begin{equation} \label{Duhamel}
u(t) = S(t)u_0 -i\nu\int_0^t S(t-t')D_x^{2\beta}(|u|^2u)(t') \,dt'.    
\end{equation}
These Fourier restriction norm methods satisfy standard but very useful linear estimates on both linear and Duhamel terms.
\begin{lem} \label{StandardLinear}
Let $0 < T \leq 1$, $u_0 \in H^s$, $F \in L^2_TH^s$, the following linear estimates hold
\begin{equation} \label{LinearS}
\|S(t)u_0\|_{X^{s,1}_{\omega,T}} \les \|u_0\|_{H^{s}_\omega}
\end{equation}
and
\begin{equation} \label{LinearD}
\bigg\|\int_0^t S(t-t')F(t') \,dt' \bigg\|_{X^{s,1}_{\omega,T}} \les  \|F\|_{L^2_T H^s_{\omega}}.
\end{equation}
\end{lem}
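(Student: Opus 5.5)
The plan is to construct explicit extensions of the functions to all of $\R$ in time. By the definition of the restriction norm $X^{s,1}_{\omega,T}$, it then suffices to bound the $X^{s,1}_\omega$ norm of each extension. Since the weight $\omega_N$ acts only on the spatial frequency and commutes with $S(t)$, we may assume $\omega_N\equiv 1$. The weighted case follows by the same computation, since all estimates below are diagonal in the frequency $k$.

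For \eqref{LinearS}, fix a time cutoff $\eta\in C_c^\infty(\R)$ with $\eta\equiv1$ on $[-1,1]$. We use $\eta(t)S(t)u_0$ as the extension; because $T\le1$, it agrees with $S(t)u_0$ on $[0,T]$. By \eqref{BourgainDef}, $S(-t)\eta(t)S(t)u_0=\eta(t)u_0$. Hence
\[
\|\eta S(t)u_0\|_{X^{s,1}}=\|\eta\|_{H^1_t}\|u_0\|_{H^s},
\]
and this gives \eqref{LinearS} immediately.

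For \eqref{LinearD}, set $G(t):=S(-t)F(t)\ind_{[0,T]}(t)$, and extend $F$ by zero outside $[0,T]$. For $t\in[0,T]$ the Duhamel term equals $S(t)\int_0^t G(t')\,dt'$. Define the extension
\[
w(t):=\eta(t)S(t)\int_0^{t}G(t')\,dt'
\]
for all $t\in\R$. Then $S(-t)w(t)=\eta(t)\int_0^tG$, and its $H^1_tH^s_x$ norm is bounded by two pieces:
\begin{itemize}
\item the $L^2_t$ part, $\|\eta\int_0^tG\|_{L^2_tH^s}\le \|\eta\|_{L^2}\,\|G\|_{L^1_tH^s}\le C\,T^{1/2}\|F\|_{L^2_TH^s}$, using Cauchy--Schwarz in time and the unitarity of $S(-t)$ on $H^s$;
\item the derivative part, $\partial_t\big(\eta\int_0^tG\big)=\eta'\int_0^tG+\eta G$, which is controlled in $L^2_tH^s$ by $\|G\|_{L^1_tH^s}+\|G\|_{L^2_tH^s}\lesssim\|F\|_{L^2_TH^s}$, again using $T\le1$ and unitarity.
\end{itemize}
This yields \eqref{LinearD}.

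There is no real obstacle. The only point needing care is that the exponent $b=1$ is an integer, so the $H^1_t$ norm can be computed directly via $\partial_t$. This avoids the usual fractional-$b$ issue where a sharp cutoff $\ind_{[0,T]}$ is not bounded on $H^b_t$ for $b>1/2$. Here the sharp cutoff sits inside $G$, which is only measured in $L^2_t$ and $L^1_t$, and the time integral absorbs the derivative, so no $b>1/2$ cutoff estimate is required.
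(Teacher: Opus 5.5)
Your proof is correct. The paper gives no argument of its own and defers to the standard linear estimates of Ginibre--Tsutsumi--Velo \cite{GTV}. Those estimates are proved by Fourier analysis in the time variable for a range of possibly fractional modulation exponents. They map $X^{s,b'}$ into $X^{s,b}$ with a gain $T^{1-b+b'}$; the instance needed here is $b=1$, $b'=0$, with $X^{s,0}=L^2_tH^s$.

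You instead use that $b=1$ is an integer. The $H^1_t$ norm of $S(-t)w(t)=\eta(t)\int_0^tG$ can then be computed directly by the product rule. The sharp cutoff $\ind_{[0,T]}$ only ever enters through the $L^1_t$ and $L^2_t$ norms of $G$, so no fractional-$b$ cutoff lemma is needed.

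Your route gives a fully elementary, self-contained proof. It also makes the uniformity of the constants transparent. Uniformity in $T\le1$ comes from your unscaled cutoff $\eta$. Uniformity in the envelope $\omega$ holds because you use only Minkowski's inequality and the unitarity of $S(t)$ on $H^s_\omega$, a weighted $\ell^2_k$ norm up to equivalence; this matters later in the paper, where $\delta$ depends on the data.

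The cited route buys generality, such as estimates with $b$ close to $\frac12$ and positive powers of $T$. None of that is required for this lemma.
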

\begin{proof}
Proof is standard, see for instance \cite{GTV}.
\end{proof}
An issue that arises from this use of Fourier restriction norm method is that proving any kind of estimate requires functions defined for all $t \in \R$. Luckily, this is just something that requires technical finesse rather than being a fundamental roadblock. We circumvent these problems by defining extensions beyond $[0,T]$ to $\R$ in the same way as in \cite{MolinetTanakaUncond}. Viewing $\varphi \in C^\infty_c(\R)$ as defined above now as a time cut-off. We define an extension of $u$ with the operator $\rho_T$ as 
\begin{equation} \label{extension}
\rho_T(u) := S(t)\varphi(t) S(-\mu_T(t)) u(\mu_T(t)),
\end{equation}
where $\mu_T$ is the continuous piecewise function
\[
\mu_T(t) := 
\begin{cases}
0 \, &\text{ if } \hspace{.5cm} t \not\in [0,2T] \\
t \, &\text{ if } \hspace{.5cm}t \in [0,T] \\
2T - t &\text{ if } \hspace{.5cm}t\in [T,2T].
\end{cases}
\]
Observe that for $0 < T \leq 1$ and $t \in [0,T]$
\[ u(t) = \rho_T(u)(t),\]
hence, this is a true extension of $u$ beyond the original time interval. Defining $Z^s_\omega := L^\infty_t H^s_\omega \cap X^{s-2\beta,1}_\omega$, the benefit of this extension is revealed in the following lemma.
\begin{lem} \label{rhoBounded}
Let $0 < T \leq 1$ and $s \in \R$. Then
\[ \rho_T:X_{\omega,T}^{s-2\beta,1}\cap L^\infty_T H^s_\omega \rightarrow Z^s_\omega \]
\ is a bounded linear operator, with
\[ \|\rho_T(u)\|_{L^\infty_t H^s_\omega} + \|\rho_T(u)\|_{X^{s-2\beta,1}_\omega} \lesssim \|u\|_{L^\infty_T H^s_\omega} + \|u\|_{X^{s-2\beta,1}_{\omega,T}}\]
for all $u \in X^{s-2\beta,1}_{\omega, T}\cap L^\infty_TH^s_\omega$.
Moreover
\begin{equation} 
\|\rho_T(u)\|_{L^\infty_t H^s_\omega} \lesssim \|u\|_{L^\infty_T H^s_\omega}
\end{equation}
for $u \in L^\infty_T H^s_\omega$.
\end{lem}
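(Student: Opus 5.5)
Linearity of $\rho_T$ is clear, since $u\mapsto u(\mu_T(t))$, $S(-\mu_T(t))$, multiplication by $\varphi$ and $S(t)$ are all linear. The natural approach is to conjugate by the free flow and reduce everything to one-dimensional estimates in time for each Fourier mode. Set $v(t):=S(-t)u(t)$ on $[0,T]$. Then $S(-t)\rho_T(u)(t)=\varphi(t)\,v(\mu_T(t))$, so on the Fourier side
\[
\mathcal F_x\big[S(-t)\rho_T(u)(t)\big](k)=\varphi(t)\,\hat v(k,\mu_T(t)).
\]
Since $S(t)$ is unitary on $H^s_\omega$, and the weights $\omega_N N^{s}$ act only on $k$, all the required bounds reduce to uniform-in-$k$ estimates for scalar functions of $t$.

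For the $L^\infty_tH^s_\omega$ bound I will use that $\mu_T(\R)\subseteq[0,T]$ and $0\le\varphi\le1$. These give
\[
\|\rho_T(u)(t)\|_{H^s_\omega}\le\|u(\mu_T(t))\|_{H^s_\omega}\le\|u\|_{L^\infty_TH^s_\omega}
\]
for every $t$. This is the ``moreover'' statement, and it already holds for $u\in L^\infty_TH^s_\omega$ alone.

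For the $X^{s-2\beta,1}_\omega$ bound, I will fix any extension $w\in X^{s-2\beta,1}_\omega$ of $u$ with $\|w\|_{X^{s-2\beta,1}_\omega}\le 2\|u\|_{X^{s-2\beta,1}_{\omega,T}}$, and set $g_k(t):=\mathcal F_x[S(-t)w(t)](k)$. Then $g_k=\hat v(k,\cdot)$ on $[0,T]$, and $\sum_k \langle k\rangle_\omega^{2(s-2\beta)}\|g_k\|_{H^1_t}^2\lesssim\|u\|^2_{X^{s-2\beta,1}_{\omega,T}}$, with the obvious weight notation. The task is therefore to show, for each $k$ with a constant independent of $k$ and $T$,
\[
\|\varphi(t)\,g_k(\mu_T(t))\|_{H^1_t(\R)}\lesssim \|g_k\|_{H^1([0,T])}+\sup_{[0,T]}|g_k|,
\]
where the supremum term is needed because the $L^2$ norm over $[0,T]$ is small when $T$ is small. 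Squaring and summing over $k$ will then finish the proof, once the sup term is controlled by $\|u\|_{L^\infty_TH^s_\omega}$: indeed $\langle k\rangle^{s-2\beta}\le\langle k\rangle^{s}$ because $\beta\ge0$, so this term is bounded by $\|u\|_{L^\infty_TH^{s}_\omega}$.

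The key step, and the main obstacle, is this scalar estimate. I will use that $\mu_T$ is continuous and piecewise linear with slopes in $\{0,\pm1\}$. Hence $h:=g_k\circ\mu_T$ is absolutely continuous (it is constant, equal to $g_k(0)$, outside $[0,2T]$), and $h'(t)=\pm g_k'(\mu_T(t))$ a.e.\ on $[0,2T]$ and $0$ elsewhere. This gives $\|h'\|_{L^2(\R)}\le\sqrt2\|g_k'\|_{L^2(0,T)}$ and $\|h\|_{L^\infty}\le\sup_{[0,T]}|g_k|$. Then $(\varphi h)'=\varphi'h+\varphi h'$, so
\[
\|\varphi h\|_{H^1}\lesssim\|\varphi\|_{W^{1,2}}\|h\|_{L^\infty}+\|h'\|_{L^2},
\]
with $\varphi$ compactly supported. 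The only subtlety is that the $L^2$ part of $h$ is not controlled by $\|g_k\|_{L^2(0,T)}$ when $T$ is small; the pointwise bound through $L^\infty_TH^s_\omega$ is what removes this, which explains why both norms appear on the right-hand side of the lemma. Finally, I will check that $\rho_T(u)\in C_t$ with values in the relevant space, so that the time restriction is legitimate. By density, it suffices to verify this for smooth $u$.
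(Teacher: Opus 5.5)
Your strategy is the paper's: conjugate by the free flow, use that $\mu_T$ is piecewise linear with slopes in $\{0,\pm1\}$ so that $\|\partial_t(g\circ\mu_T)\|_{L^2(\R)}\le\sqrt2\|g'\|_{L^2(0,T)}$, and handle $\varphi$ by the product rule. The $L^\infty_tH^s_\omega$ part and your per-mode $H^1_t$ estimate are correct. The step that fails is the summation of the supremum term. After squaring and summing you must control
\[
\sum_k\langle k\rangle_\omega^{2(s-2\beta)}\sup_{t\in[0,T]}|g_k(t)|^2 ,
\]
which is an $\ell^2_kL^\infty_t$ quantity. By contrast, $\|u\|_{L^\infty_TH^s_\omega}^2=\sup_t\sum_k\langle k\rangle_\omega^{2s}|g_k(t)|^2$ is $L^\infty_t\ell^2_k$. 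Only $\sup_t\sum_k\le\sum_k\sup_t$ holds, which is the wrong direction. The factor $\langle k\rangle^{-4\beta}$ helps only through the crude bound $\sum_k\langle k\rangle^{-4\beta}<\infty$, i.e.\ for $\beta>1/4$. For $\beta=0$, take $K$ modes whose conjugated amplitudes are $\langle k_j\rangle_\omega^{-s}\psi_j(t)$, with unit bumps $\psi_j$ supported on disjoint subintervals of $[0,T]$. Then $\|u\|_{L^\infty_TH^s_\omega}\le1$, while your sum equals $K$.

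The repair is short.

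\emph{Mode-by-mode fix.} For each $k$, $\sup_{\R}|g_k|\lesssim\|g_k\|_{H^1(\R)}$ with an absolute constant. Hence the supremum term sums to at most a constant times $\|w\|^2_{X^{s-2\beta,1}_\omega}\le4\|u\|^2_{X^{s-2\beta,1}_{\omega,T}}$. Combined with $\|g_k'\|_{L^2(0,T)}\le\|g_k\|_{H^1(\R)}$, this gives $\|\rho_T(u)\|_{X^{s-2\beta,1}_\omega}\lesssim\|u\|_{X^{s-2\beta,1}_{\omega,T}}$ with no $L^\infty_TH^s_\omega$ term at all. So your explanation of why that norm must appear on the right is not correct.

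\emph{The paper's route.} Alternatively, do the $L^2_t$ estimates with values in $H^{s-2\beta}_\omega$ rather than mode by mode. For $t\notin[0,2T]$ one has $S(-t)\rho_T(u)(t)=\varphi(t)u(0)$. This piece contributes $\|\varphi\|_{H^1}\|u(0)\|_{H^{s-2\beta}_\omega}\le\|\varphi\|_{H^1}\|u\|_{L^\infty_TH^s_\omega}$; only a single time enters, so no interchange of supremum and sum occurs. On $[0,2T]$, the reflection $t\mapsto 2T-t$ reduces everything to $\|S(-\cdot)u\|_{L^2_TH^{s-2\beta}_\omega}$ and $\|\partial_t(S(-\cdot)u)\|_{L^2_TH^{s-2\beta}_\omega}$. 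Both are bounded by $\|u\|_{X^{s-2\beta,1}_{\omega,T}}$.

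Your closing remark about continuity and density is unnecessary.
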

\begin{proof}
We assume $\omega_N \equiv 1$, as the proof is independent of the frequency envelope. The unitary nature of $S(t)$ in $H^s$ immediately gives
\begin{equation}
\|\rho_T(u)\|_{L^\infty_t H^s} \les \|u(0)\|_{H^s} + \|u(\cdot)\|_{L^\infty_T H^s} + \|u(T-\cdot)\|_{L^{\infty}_T H^s} \les \|u\|_{L^\infty_T H^s}.
\end{equation}
It is therefore sufficient to show estimates in $X^{s,b}$. By definition \eqref{BourgainDef}
\begin{equation}
\begin{split}
\|\rho_T(u)\|_{X^{s-2\beta,1}} & \les \|  \varphi(\cdot) S( - \mu_T(\cdot))u(\mu_T(\cdot))\|_{H^1_t H^{s-2\beta}_x} \\
& \les \| \varphi(\cdot) S( - \mu_T(\cdot))u(\mu_T(\cdot))\|_{L^2_t H^{s-2\beta}_x} + \| \del_t[\varphi(\cdot) S( - \mu_T(\cdot))u(\mu_T(\cdot))]\|_{L^2_t H^{s-2\beta}_x}.
\end{split}
\end{equation}
By $X_T^{s-2\beta,1} \hookrightarrow C([0,T]; H^{s-2\beta})$ and definition of $\mu_T$
\begin{equation}
\begin{split} \label{rhoBounded1}
\| \varphi(\cdot) S( - \mu_T(\cdot))u(\mu_T(\cdot))\|_{L^2_t H^{s-2\beta}_x} &\les \|u(0)\|_{H^{s-2\beta}_x} +\|S(-\,\cdot)\varphi(\cdot) u(\cdot)\|_{L^2_T H^{s-2\beta}_x} \\
&\leq \|u\|_{L^\infty_T H^{s-2\beta}_x} + \|u\|_{X_T^{s-2\beta,0}}
\end{split}
\end{equation}
and 
\begin{equation}
\begin{split}
\| \del_t[\varphi(\cdot) S( - \mu_T(\cdot))& u(\mu_T(\cdot))]\|_{L^2_t H^{s-2\beta}_x} \\ &= \|\varphi_t S( - \mu_T(\cdot))u(\mu_T(\cdot))\|_{L^2_t H^{s-2\beta}_x}  + \|S(-\, \cdot)(\del_t - i(-\Delta)^{\frac{\alpha}{2}})u(\cdot)\|_{L^2_T H^{s-2\beta}_x} \\
& \les \|u(0)\|_{H^{s-2\beta}_x} + \|u\|_{X^{s-2\beta,0}_T} + \inf_{v}\| |\tau - |k|^\alpha| \abrac{k}^{s-2\beta} \hat{v} (k,\tau)\|_{L^2_{\tau\xi}} \\
& \les \|u\|_{L^\infty_T H^{s-2\beta}} + \|u\|_{X^{s-2\beta,1}_T}.
\end{split}
\end{equation}
The bound on the term including $\varphi_t$ follows from the same argument for \eqref{rhoBounded1}. This completes the proof.
\end{proof}
In this way we can extend a function $u$ defined on $[0,T]$ to $\R$, while recovering bounds in terms of the original function and norm.

We denote by $\ind_T$ the characteristic function of the interval $[0,T]$. This will naturally arise in the computation of time integrals; when replacing functions with their extensions $\cu$, we require an indicator function to ensure value of the time integral remains unchanged. A new technical hindrance then arises when we plan to make use of modulation, the operator $Q_{\gtrsim L}$ does not commute with $\ind_T$. To bypass this, we further decompose
\begin{equation} \label{indDecomp}
\ind_T = \ind_{T,R}^{\text{high}} + \ind_{T,R}^{\text{low}} \hspace{.5cm} \text{with}\hspace{.5cm} \mathcal{F}_t(\ind_{T,R}^{\text{low}})(\tau) = \varphi(\tau/R)\mathcal{F}_t(\ind_{T})(\tau)
\end{equation}
for some choice of $R$.
The value of this decomposition is that, for $R$ not too large, $Q_L$ does essentially commute with $\ind_{T,R}^{\text{low}}$
\begin{lem} \label{QLlow}
For $T > 0, R> 0$ and $L \gg R$, it holds
\begin{equation}
\|Q_L(\ind_{T,R}^{\text{low}}u)\|_{L^2_{tx}} \lesssim \|Q_{\sim L}u\|_{L^2_{tx}}
\end{equation}
for all $u \in L^2(\R \times \T)$.
\end{lem}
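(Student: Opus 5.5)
The plan is to work entirely on the Fourier side in $(k,\tau)$, where everything reduces to a one-dimensional convolution in $\tau$ for each fixed $k$. Write
\[
\mathcal{F}_{tx}(\ind_{T,R}^{\text{low}}u)(k,\tau)=\frac{1}{2\pi}\int_\R \varphi\Big(\frac{\tau-\sigma}{R}\Big)\mathcal{F}_t(\ind_T)(\tau-\sigma)\,\hat u(k,\sigma)\,d\sigma .
\]
Applying $Q_L$ multiplies this by $\phi_L(\tau-|k|^\alpha)$. On the support of this factor we have $|\tau-|k|^\alpha|\sim L$, while the cut-off forces $|\tau-\sigma|\le 2R$. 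Since $L\gg R$, it follows that $|\sigma-|k|^\alpha|\sim L$ as well. Concretely, we fix the implicit constant so that $L\ge 8R$ suffices; then $|\sigma-|k|^\alpha|\in[L/4,4L]$. For $L=1$ the hypothesis $L\gg R$ means $R\ll1$, and the same argument gives $|\sigma-|k|^\alpha|\lesssim 1$. This lets us insert for free the projector $Q_{\sim L}:=\sum_{L/8\le M\le 8L}Q_M$, whose symbol equals $1$ on $\{|\sigma-|k|^\alpha|\in[L/4,4L]\}$ by the partition-of-unity property of $\{\phi_M\}$. The upshot is the identity
\[
Q_L(\ind_{T,R}^{\text{low}}u)=Q_L\big(\ind_{T,R}^{\text{low}}\,Q_{\sim L}u\big).
\]

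With this identity in hand, we drop $Q_L$, whose symbol is bounded by $1$, and use Plancherel. It then remains to show that multiplication by $\ind_{T,R}^{\text{low}}(t)$ is bounded on $L^2_{tx}$, uniformly in $T$ and $R$. This holds once $\ind_{T,R}^{\text{low}}\in L^\infty_t$ uniformly. We get that by writing $\ind_{T,R}^{\text{low}}=\ind_T * \check\varphi_R$, where $\check\varphi_R(t)=R\check\varphi(Rt)$ is the inverse Fourier transform of $\varphi(\cdot/R)$. Since $\check\varphi_R\in L^1$ with norm $\|\check\varphi\|_{L^1}$, independent of $R$, Young's inequality gives
\[
\|\ind_{T,R}^{\text{low}}\|_{L^\infty}\le\|\check\varphi\|_{L^1}.
\]
Combining the two steps yields
\[
\|Q_L(\ind_{T,R}^{\text{low}}u)\|_{L^2_{tx}}\le\|\check\varphi\|_{L^1}\,\|Q_{\sim L}u\|_{L^2_{tx}}.
\]

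The main (mild) obstacle is the support bookkeeping: making the constant in $L\gg R$ explicit so that the $\sigma$-support lands in the region where the chosen finite dyadic sum $Q_{\sim L}$ has symbol identically one, and handling the endpoint $L=1$. Neither step uses any property of $u$ beyond $L^2$, so the estimate holds for all $u\in L^2(\R\times\T)$ as claimed.
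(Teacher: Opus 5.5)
Your proof is correct and is essentially the standard argument behind the paper's citation (Lemma 2.5 of \cite{MV-15}, Lemma 3.7 of \cite{MolinetUncondKdV}). The $\tau$-kernel of $\ind_{T,R}^{\text{low}}$ is supported in $|\tau-\sigma|\le 2R\ll L$, which forces the input modulation to be $\sim L$ and gives $Q_L(\ind_{T,R}^{\text{low}}u)=Q_L(\ind_{T,R}^{\text{low}}Q_{\sim L}u)$; one then concludes with the uniform bound $\|\ind_{T,R}^{\text{low}}\|_{L^\infty_t}\lesssim 1$, exactly as you do. The one slip is harmless: with the paper's convention $\supp\phi\subseteq\{1\le|x|\le 4\}$ one gets $|\sigma-|k|^\alpha|\in[3L/4,17L/4]$ rather than $[L/4,4L]$, but your $Q_{\sim L}=\sum_{L/8\le M\le 8L}Q_M$ has symbol identically $1$ on $[L/4,16L]$, so the identity still holds.
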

\begin{proof}
    See Lemma 2.5 in \cite{MV-15} or Lemma 3.7 in \cite{MolinetUncondKdV}.
\end{proof}
Assuming that exploiting modulation can sufficiently estimate terms in which $\indl$ arises, it then remains to find a way to bound terms containing $\ind_{T,R}^{\text{high}}$. Fortunately, the following means we can bound these terms independently of modulation.

\begin{lem}\label{time-multi}
For any $R > 0$ and $T > 0$, it holds 
\begin{equation} \label{indL1Bound}
\|\ind_{T,R}^\text{high}\|_{L^1_t} \lesssim T \wedge R^{-1}
\end{equation}
and
\begin{equation} \label{indLinftyBound}
\|\ind_{T,R}^\text{high}\|_{L^\infty_t} +\|\ind_{T,R}^\text{low}\|_{L^\infty_t} \lesssim 1.
\end{equation}
Furthermore, as a consequence
\begin{equation} \label{indL2Bound}
    \|\ind_{T,R}^{\text{low}}\|_{L^2_t} \leq \|\ind_T\|_{L^2_t} + \|\ind_{T,R}^{\text{high}}\|_{L^2_t} \les T^{1/2}.
\end{equation}
\end{lem}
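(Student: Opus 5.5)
The plan is to work directly with the Fourier representation of $\ind_{T,R}^{\text{low}}$ as a convolution. Since $\mathcal{F}_t(\ind_{T,R}^{\text{low}})(\tau)=\varphi(\tau/R)\mathcal{F}_t(\ind_T)(\tau)$, we have
\[
\ind_{T,R}^{\text{low}} = \ind_T * \psi_R, \qquad \psi_R(t):=R\,\check\varphi(Rt),
\]
where $\check\varphi$ is the inverse Fourier transform of $\varphi$. Because $\varphi$ is smooth and compactly supported, $\check\varphi$ is a Schwartz function. Hence $\|\psi_R\|_{L^1_t}=\|\check\varphi\|_{L^1}\lesssim 1$ uniformly in $R$. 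Young's inequality then gives
\[
\|\ind_{T,R}^{\text{low}}\|_{L^\infty_t}\le \|\ind_T\|_{L^\infty}\|\psi_R\|_{L^1}\lesssim 1.
\]
Writing $\ind_{T,R}^{\text{high}}=\ind_T-\ind_{T,R}^{\text{low}}$, the triangle inequality yields the $L^\infty$ bound for the high part as well. This proves \eqref{indLinftyBound}.

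For \eqref{indL1Bound} I will prove the two bounds $T$ and $R^{-1}$ separately. The bound by $T$ is immediate:
\[
\|\ind_{T,R}^{\text{high}}\|_{L^1}\le \|\ind_T\|_{L^1}+\|\ind_T*\psi_R\|_{L^1}\le T+T\|\psi_R\|_{L^1}\lesssim T.
\]
For the bound by $R^{-1}$ I will use that $\varphi(0)=1$, so that $\int\psi_R=1$. This gives the pointwise identity
\[
\ind_{T,R}^{\text{high}}(t)=\int \big(\ind_T(t)-\ind_T(t-s)\big)\psi_R(s)\,ds .
\]
Integrating in $t$ and applying Fubini,
\[
\|\ind_{T,R}^{\text{high}}\|_{L^1}\le \int \|\ind_T-\ind_T(\cdot-s)\|_{L^1_t}\,|\psi_R(s)|\,ds .
\]
The difference of the two indicators is supported on a set of measure at most $2|s|$, so this is bounded by
\[
2\int |s|\,|\psi_R(s)|\,ds = 2R^{-1}\int|\sigma||\check\varphi(\sigma)|\,d\sigma\lesssim R^{-1},
\]
again because $\check\varphi$ is Schwartz. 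Combining the two bounds gives $\lesssim T\wedge R^{-1}$.

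Finally, \eqref{indL2Bound} follows from the triangle inequality $\|\ind_{T,R}^{\text{low}}\|_{L^2}\le\|\ind_T\|_{L^2}+\|\ind_{T,R}^{\text{high}}\|_{L^2}$ together with interpolation. We have $\|\ind_T\|_{L^2}=T^{1/2}$, and
\[
\|\ind_{T,R}^{\text{high}}\|_{L^2}\le \|\ind_{T,R}^{\text{high}}\|_{L^1}^{1/2}\|\ind_{T,R}^{\text{high}}\|_{L^\infty}^{1/2}\lesssim T^{1/2}
\]
by the two bounds already established. The only step needing care is the $R^{-1}$ bound, which relies on the mean-zero structure of $\psi_R-\delta$. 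The remaining steps are routine applications of Young's inequality.
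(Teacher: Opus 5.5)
Your proof is correct. It is essentially the argument behind the references the paper cites for this lemma (Lemma 2.5 of \cite{MV-15}, Lemma 3.6 of \cite{MolinetUncondKdV}): you write $\ind_{T,R}^{\text{low}}$ as $\ind_T$ convolved with the rescaled Schwartz kernel $R\,(\mathcal{F}_t^{-1}\varphi)(R\,\cdot)$, whose $L^1$ norm is independent of $R$ and whose integral is $\varphi(0)=1$, and you use $\|\ind_T-\ind_T(\cdot-s)\|_{L^1_t}\le 2\min(|s|,T)$ to get $T\wedge R^{-1}$, then interpolate $L^1$ and $L^\infty$ for the $L^2$ bound.
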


\begin{proof}
 See Lemma 2.5 in \cite{MV-15} or Lemma 3.6 in \cite{MolinetUncondKdV}.
\end{proof}

Since we are working with energy estimates, our next concern is our method of shifting derivatives from high to low frequencies. The next lemma essentially allows us to perform this operation.
\begin{lem} \label{commutator}
Let $s \geq 0$, $1 \leq p, q , r \leq \infty$ with $\frac{1}{r} = \frac{1}{p} + \frac{1}{q} $, we have
\begin{equation}
\| [D_x^s P_N , f](g) \|_{L^r(\T)} \lesssim N^{s-1} \|D_xf\|_{L^p(\T)} \|g\|_{L^q(\T)}.
\end{equation}
where $[D_x^sP_N,f]$ is the commutator defined
as
\begin{equation}
[D_x^s P_N,f](g) := D_x^sP_N(fg) - f D_x^sP_Ng.
\end{equation}
\end{lem}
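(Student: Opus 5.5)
The plan is to write the commutator as a bilinear Fourier multiplier and bound its symbol with the mean value theorem. Set $m_N(k)=|k|^s\phi_N(k)$ for $N\ge1$, so that $D_x^sP_N$ is the Fourier multiplier with symbol $m_N$. On the Fourier side,
\[
\mathcal F_x\big([D_x^sP_N,f](g)\big)(k)=\sum_{k_1+k_2=k}\big(m_N(k_1+k_2)-m_N(k_2)\big)\hat f(k_1)\hat g(k_2).
\]
The fundamental theorem of calculus gives
\[
m_N(k_1+k_2)-m_N(k_2)=k_1\int_0^1 m_N'(k_2+\theta k_1)\,d\theta,
\]
and the factor $k_1$ becomes $\partial_x f$ in physical space. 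We then write $m_N'(\xi)=N^{s-1}\psi(\xi/N)$ with $\psi(\eta)=\frac{d}{d\eta}\big(|\eta|^s\phi(\eta)\big)$. Since $\phi$ is supported away from the origin, $\psi$ is a smooth compactly supported function for every $s\ge0$. This yields
\[
[D_x^sP_N,f](g)=N^{s-1}\int_0^1 T_\theta(\partial_x f,g)\,d\theta,
\]
where $T_\theta$ is the bilinear operator with symbol $\psi\big((k_2+\theta k_1)/N\big)$.

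The key step is to bound each $T_\theta$ uniformly in $\theta$ and $N$. The approach is to insert $\psi(\eta/N)=\int_{\R}\check\psi_N(y)e^{iy\eta}\,dy$, where $\check\psi_N(y)=N\check\psi(Ny)$ is the inverse Fourier transform on $\R$, so that the symbol separates in $k_1$ and $k_2$:
\[
T_\theta(F,g)(x)=\int_{\R}\check\psi_N(y)\,F(x+\theta y)\,g(x+y)\,dy.
\]
Here translations are taken on $\T$, with $y$ read mod $2\pi$. Minkowski's inequality, then Hölder's inequality with $\frac1r=\frac1p+\frac1q$, and the translation invariance of $L^p(\T)$ give
\[
\|T_\theta(F,g)\|_{L^r}\le\|\check\psi_N\|_{L^1(\R)}\|F\|_{L^p}\|g\|_{L^q}=\|\check\psi\|_{L^1}\|F\|_{L^p}\|g\|_{L^q}.
\]
Integrating in $\theta$ then gives the claimed bound with $F=\partial_x f$. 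Note that $\|\partial_x f\|_{L^p}=\|D_x f\|_{L^p}$ when $1<p<\infty$ by boundedness of the Hilbert transform. For $p=1$ or $p=\infty$ I would state the estimate with $\partial_x f$, which is the form actually used in the energy estimates.

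The step needing care is the smoothness of $|\eta|^s\phi(\eta)$ near the origin, which is why the support of $\phi$ matters. If $N=1$ were allowed to include a low-frequency piece containing $0$, the symbol would fail to be smooth for non-integer $s$. That case instead would be handled directly by the trivial bound: $D_x^sP_1$ is a finite-band operator with an $L^1$ kernel, so both terms of the commutator are bounded by $\|f\|$-type quantities, and $\hat{f}(0)$ cancels in the commutator. The remaining bookkeeping, namely periodizing the kernel $\check\psi_N$ and checking that the operators are well-defined for $f,g$ in the relevant spaces by density, is routine.
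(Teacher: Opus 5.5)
Your proof is correct, and it follows a different route from the paper, which gives no argument and only cites \cite[Lemma 2.3]{FracLeib}.

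You prove the bound directly, in three steps:
\begin{enumerate}
\item the fundamental theorem of calculus in the symbol;
\item the scaling $m_N'=N^{s-1}\psi(\cdot/N)$ with $\psi\in C_c^\infty(\R)$;
\item the representation of $T_\theta$ as an $L^1(\R)$-average of products of translates, after which Minkowski and H\"older conclude.
\end{enumerate}
Step 2 uses only that $\phi$ vanishes on $[-1,1]$. In the paper's convention this holds for every dyadic $N\ge1$, so your caveat about $N=1$ is not needed. Your route gives a self-contained proof with constant $\|\mathcal F^{-1}\psi\|_{L^1(\R)}$, uniform in $N$ and $\theta$, for all $1\le p,q,r\le\infty$. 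It applies verbatim to any multiplier $N^{\sigma}h(\cdot/N)$ with $h\in C_c^\infty(\R)$. That includes $D_x^{2\beta+2s}P_N^2$ and $D_x^{2\beta}P_N^2$, which are the operators actually used in \eqref{Bcommutator} and \eqref{B2Commutator}. The citation is shorter, but it requires matching an external result to these $P_N^2$ multipliers and to the endpoint exponents used later. Your argument does need the frequency localization, since the symbol must be smooth at scale $N$, but that is all the paper uses.

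Stating the endpoint cases with $\partial_x f$ is in fact forced. With $D_x=|\partial_x|$ the inequality as written fails at $p=\infty$. Take $p=q=r=\infty$, $g=e^{ik_0x}$ with $k_0=\lfloor N\eta_0\rfloor$ where $\psi(\eta_0)\neq0$, and $f$ a trigonometric polynomial of degree $M$ with $M^2\le N$. Your own Taylor expansion gives
\[
[D_x^sP_N,f](g)=-i\,m_N'(k_0)\,e^{ik_0x}\,\partial_xf+O\big(N^{s-2}M^{3/2}\|D_xf\|_{L^\infty}\big).
\]
Now let $D_xf$ be the degree-$M$ Fej\'er mean of a square wave. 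Then $\|D_xf\|_{L^\infty}\le1$, while $\partial_xf$ is its conjugate function and satisfies $\|\partial_xf\|_{L^\infty}\gtrsim\log M$. So the left side is $\gtrsim N^{s-1}\log M$, while the right side is $\le N^{s-1}$. The Fej\'er kernel gives the analogous failure at $p=1$.

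This does not hurt the paper. The lemma is only applied with $f=P_M(\cdots)$, and it is immediately followed by Bernstein's inequality $\|\partial_xP_MG\|_{L^p}\lesssim M\|G\|_{L^p}$, which is exactly your $\partial_x$ form.

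One small slip: for $1<p<\infty$ one has $\|\partial_xf\|_{L^p}\sim\|D_xf\|_{L^p}$, not equality. Only the direction $\lesssim$ is needed.
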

\begin{proof}
See Lemma 2.3 in \cite{FracLeib}.
\end{proof}

Lastly we state some basic estimates which will see varying use.

\begin{lem} \label{QLBounded}
For $1 \leq p \leq \infty$ and $L \in 2^{\N_0}$ it holds
\begin{equation}
\|Q_{\leq L} u \|_{L^p_t L^2_x} \les \|u\|_{L^p_t L^2_x},
\end{equation}
for all $u \in L^p_t L^2_x$. Here the implicit constant is independent of $L$.
\end{lem}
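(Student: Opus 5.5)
The plan is to use that $Q_{\le L}$ is a Fourier multiplier in the time variable alone. The spatial Fourier coefficients are not affected; only a smooth cut-off in the shifted variable $\tau-|k|^\alpha$ is applied.

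First, write $u$ in the interaction representation $v(t):=S(-t)u(t)$. By the definition of $Q_L$, for each fixed $k$ the operator $Q_{\le L}$ acts on $\mathcal F_x u(k,\cdot)$ by conjugating with the modulation $e^{it|k|^\alpha}$ and applying the time multiplier $m_L(\tau):=\sum_{L'\le L}\phi_{L'}(\tau)$. For $L\ge1$ this symbol coincides with $\varphi(\tau/L)$: the telescoping sum of the annular cut-offs gives $\varphi(\tau/L)$ up to the convention for the low-modulation piece $Q_1$, and with the $Q_1$ definition the sum over $L'\le 1$ is exactly $\varphi(\tau)$. Thus
\[
Q_{\le L}u=S(t)\big(\mathcal F_t^{-1}[\varphi(\cdot/L)]\ast_t S(-\cdot)u\big),
\]
that is, convolution in $t$ of $S(-t)u(t)$ with the kernel $K_L(t)=L\check\varphi(Lt)$, followed by $S(t)$.

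Second, $K_L\in L^1(\R)$ with $\|K_L\|_{L^1}=\|\check\varphi\|_{L^1}$, which is independent of $L$ by scaling, since $\varphi$ is Schwartz. Because $S(t)$ is unitary on $L^2_x$,
\[
\|Q_{\le L}u(t)\|_{L^2_x}=\Big\|\int K_L(t-t')S(-t')u(t')\,dt'\Big\|_{L^2_x}\le \int |K_L(t-t')|\,\|u(t')\|_{L^2_x}\,dt'.
\]
Young's inequality in $t$ then gives $\|Q_{\le L}u\|_{L^p_tL^2_x}\le\|\check\varphi\|_{L^1}\|u\|_{L^p_tL^2_x}$ for every $1\le p\le\infty$.

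The only point needing care, and the main obstacle, is the bookkeeping of the dyadic convention. One must check that $\sum_{L'\le L}\phi_{L'}$ really is a rescaled fixed Schwartz function, or a finite sum of such. Otherwise one would need to bound each $Q_{L'}$ separately, and summing the logarithmically many pieces would lose uniformity in $L$. With the telescoping identity $\sum_{1<L'\le L}\big(\varphi(\tau/L')-\varphi(2\tau/L')\big)=\varphi(\tau/L)-\varphi(\tau)$, together with the $Q_1$ piece, the symbol is exactly $\varphi(\tau/L)$, so the constant is uniform in $L$. The same argument gives the analogous bound for $Q_{\ge L}=I-Q_{<L}$ if needed later.
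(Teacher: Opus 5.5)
Your proof is correct and is essentially the standard argument behind the result the paper cites (Lemma 4.1 of \cite{MolinetTanakaUncond}): $Q_{\le L}$ is $S(t)$ conjugating a time convolution of $S(-t)u$ with an $L^1$-normalized dilate of $\mathcal{F}_t^{-1}\varphi$, and then Minkowski's inequality, unitarity of $S(t)$ on $L^2_x$, and Young's inequality give the bound uniformly in $L$. One cosmetic slip: with the paper's convention $\phi(k)=\varphi(|k|/2)-\varphi(|k|)$, the telescoped symbol is $\varphi\big((\tau-|k|^\alpha)/(2L)\big)$ rather than $\varphi\big((\tau-|k|^\alpha)/L\big)$; this is still a dilate of $\varphi$ whose kernel has the same $L^1$ norm, so nothing in your argument changes.
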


\begin{proof}
   See Lemma 4.1 in \cite{MolinetTanakaUncond}. 
\end{proof}

\begin{lem}[Sobolev Multiplication Law] \label{SobolevMult}
Let $s_1+s_2 \geq 0$, $s_1 \wedge s_2 \geq s_3$ and $s_3 < s_1 + s_2 - 1/2$. Then
\begin{equation} \label{SobolevMultiplication}
\|uv\|_{H^{s_3}} \les \|u\|_{H^{s_1}}\|v\|_{H^{s_2}}.
\end{equation}
Furthermore, for $s > 0$ we have the estimate
\begin{equation} \label{SobolevMultiplicationWeighted}
\|uv\|_{H^s_\omega} \les \|u\|_{H^s_\omega}\|v\|_{L^\infty} + \|u\|_{L^\infty}\|v\|_{H^s_\omega}.
\end{equation}
\end{lem}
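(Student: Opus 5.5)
We prove both estimates of Lemma \ref{SobolevMult} on the Fourier side, via dyadic (Littlewood--Paley) decompositions and Cauchy--Schwarz over $\Z$.

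\emph{First estimate.} By duality, \eqref{SobolevMultiplication} is equivalent to a bound on the trilinear form
\[
\sum_{k_1+k_2+k_3=0}\frac{a(k_1)b(k_2)c(k_3)}{\abrac{k_1}^{s_1}\abrac{k_2}^{s_2}\abrac{k_3}^{-s_3}}
\les \|a\|_{\ell^2}\|b\|_{\ell^2}\|c\|_{\ell^2},
\]
with $a,b,c\ge0$. We split according to which frequency is smallest. Up to the symmetry $u\leftrightarrow v$, there are two cases.
\begin{itemize}
\item If $|k_1|\les|k_2|\sim\abrac{k_3}$ (or the symmetric case), the weight is $\abrac{k_1}^{-s_1}\abrac{k_2}^{-s_2+s_3}$.
\item If $|k_3|\ll|k_1|\sim|k_2|$ (high--high to low), the weight is $\abrac{k_3}^{s_3}\abrac{k_1}^{-s_1-s_2}$.
\end{itemize}

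In the first case, if $s_1>1/2$ we use $s_2\ge s_3$ and sum $\abrac{k_1}^{-s_1}a(k_1)$ in $\ell^1$, which is controlled by $\|a\|_{\ell^2}$. If $s_1\le1/2$, we instead trade the factor $\abrac{k_2}^{s_3-s_2}\le\abrac{k_1}^{s_3-s_2}$. This is valid because $s_3-s_2\le0$ and $|k_1|\les|k_2|$. We then need $\sum_{k_1}\abrac{k_1}^{2(s_3-s_1-s_2)}<\infty$, which is exactly the hypothesis $s_3<s_1+s_2-1/2$.

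In the second case, if $s_3\ge0$ we bound $\abrac{k_3}^{s_3}\le\abrac{k_1}^{s_3}$ and reduce to the same summability condition. If $s_3<0$, we use $s_1+s_2\ge0$ and sum over the low frequency $k_3$ with weight $\abrac{k_3}^{s_3}$, again via Cauchy--Schwarz; the condition $s_3<s_1+s_2-1/2$ supplies the square-summability. In every case the Schur/Cauchy--Schwarz step is routine once the weights are arranged.

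\emph{Second estimate.} For \eqref{SobolevMultiplicationWeighted} we use the paraproduct decomposition $uv=\sum_N P_{<N/8}u\,P_Nv+\sum_N P_Nu\,P_{<N/8}v+\sum_{N\sim M}P_Nu\,P_Mv$.
\begin{itemize}
\item Each output block $P_K(P_{<N/8}u\,P_Nv)$ is nonzero only for $K\sim N$, and it is bounded in $L^2$ by $\|u\|_{L^\infty}\|P_Nv\|_{L^2}$. Since $\omega_K\sim\omega_N$ for $K\sim N$ (from $\omega_N\le\omega_{2N}\le\delta\omega_N$), this gives $\|u\|_{L^\infty}\|v\|_{H^s_\omega}$. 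The symmetric paraproduct is handled identically.
\item For the high--high term, the output frequency satisfies $K\les N$. We estimate $\|P_K(P_Nu\,P_{\sim N}v)\|_{L^2}\le\|P_Nu\|_{L^2}\|v\|_{L^\infty}$ and then need
\[
\sum_{K\les N}\omega_K K^{s}\omega_N^{-1}N^{-s}\les1 .
\]
This holds because $\omega_K\le\omega_N$ (monotonicity of the envelope) and $s>0$ makes the geometric sum in $K$ converge. We apply Schur's test in $(K,N)$ to conclude. The zero mode $P_0$ is trivially bounded by $\|u\|_{L^2}\|v\|_{L^\infty}$.
\end{itemize}

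\emph{Main obstacle.} The only delicate point is the endpoint bookkeeping in the first estimate when $s_3<0$. There one must make sure that the low-output case uses $s_1+s_2\ge0$ and the strict inequality $s_3<s_1+s_2-1/2$ simultaneously, without losing a logarithm. I would handle this by performing Cauchy--Schwarz in the smallest frequency variable in each case.
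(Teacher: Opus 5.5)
Your proof is correct. It differs from the paper only in that the paper gives no argument: it cites Corollary 3.16 of \cite{TTaoL2} for \eqref{SobolevMultiplication} and Lemma 2.2 of \cite{MolinetTanakaUncond} for \eqref{SobolevMultiplicationWeighted}, whereas you supply a self-contained Fourier-side proof.

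\emph{First estimate.} Your duality and Cauchy--Schwarz argument can be streamlined. In every frequency configuration, the hypotheses $s_3\le s_1\wedge s_2$ and $s_1+s_2\ge0$ let you bound the weight by $\langle k_{\min}\rangle^{s_3-s_1-s_2}$, and you then sum in the smallest frequency. So the sub-splits $s_1>\frac12$ versus $s_1\le\frac12$, and $s_3\ge0$ versus $s_3<0$, are unnecessary. There is also no logarithm at risk, since the hypothesis $s_3<s_1+s_2-\frac12$ is strict.

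What the citation buys over your argument is sharpness. A sharp multiplication law also covers endpoint configurations such as $\|uv\|_{L^2}\les\|u\|_{H^{1/4}}\|v\|_{H^{1/4}}$. There $s_3=s_1+s_2-\frac12$, and the square-summability of $\langle k\rangle^{s_3-s_1-s_2}$ that drives your argument fails. The paper only ever uses the strict case, so nothing is lost.

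\emph{Weighted estimate.} Your Bony paraproduct argument is the standard proof of this kind of bound. Two points deserve an explicit sentence.
\begin{itemize}
\item $P_{<N/8}$ and $P_{\sim N}$ are bounded on $L^\infty(\T)$ uniformly in $N$, because their periodized kernels have uniformly bounded $L^1$ norms.
\item In the high--high sum you need $\omega_K\les\omega_N$ for all $K\les N$, not just $K\le N$. For $N<K\le CN$ this comes from the doubling bound $\omega_{2N}\le\delta\omega_N$ with $\delta\le2$, not from monotonicity alone. This keeps the constant uniform in the envelope, which the paper needs later when the envelope depends on the data.
\end{itemize}
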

\begin{proof}
For \eqref{SobolevMultiplication} see Corollary 3.16 in \cite{TTaoL2}. \eqref{SobolevMultiplicationWeighted} is proven as Lemma 2.2 in \cite{MolinetTanakaUncond}.
\end{proof}

\begin{lem} \label{MVTCounting}
Let $I$ and $J$ be two intervals on the real line and $f \in C^1(J; \R)$. Then
\begin{equation}
   \# \{x \in J \cap \Z : f(x) \in I\} \leq \frac{|I|}{\inf_{x \in J}|f'(x)|} + 1.
\end{equation}
\begin{proof}
See Lemma 2 in \cite{TzvetkozPeriodicKP1}.
\end{proof}
\end{lem}



\section{\texorpdfstring{Failure of $C^3$ regularity of the flow map}{C3 Ill-posedness}} \label{ill-p}

In this section we show that when $\beta>0$, the initial value problem \eqref{MMTEq} cannot be solved via the contraction mapping principle.

\medskip

Let $\Phi_t : H^s(\T) \to H^s(\T)$ denote the solution map associated to \eqref{MMTEq}. 
If one were able to construct solutions by a contraction argument in a Banach space embedded in $C([0,T];H^s)$, then $\Phi_t$ would necessarily be smooth with respect to the initial data; in particular, it would be $C^3$ at the origin.

To test this, consider initial data of the form $\delta u_0(x)$,
with $\delta>0$ small and recall that
\[
d_0^3\Phi_t(u_0,u_0,u_0) = \frac{d^3}{d\delta^3} \Phi_t (\delta u_0) \Big|_{\delta = 0}.
\]
Computing the third Fréchet derivative of $\Phi_t$ at the origin and passing to Fourier variables (see also Section 5 of \cite{PPW-26}), one obtains, up to a nonzero constant $c_\nu$ depending only on $\nu$,
\begin{equation}
\label{eq:thirdFrechet}
c_\nu^{-1}\mathcal F_x [{d_0^3\Phi_t(u_0,u_0,u_0)} ] (k)
=
|k|^{2\beta} e^{it\omega(k)}
\int_0^t
\sum_{k_1-k_2+k_3=k}
e^{i\tau \Omega(\vec{k})}
\widehat u_0(k_1)
\overline{\widehat u_0(k_2)}
\widehat u_0(k_3)
\, d\tau,
\end{equation}
where
\[
\vec{k}=(k_1,k_2,k_3,k),
\qquad
\omega(k)=|k|^\alpha,
\]
and the resonance function is
\[
\Omega(\vec{k})
=
\omega(k)
-
\omega(k_1)
+
\omega(k_2)
-
\omega(k_3).
\]

\medskip

We now choose specific initial data to exhibit the growth of the third derivative \eqref{eq:thirdFrechet}. 
Fix $n\gg1$ and take
\[
\phi_1(x)= e^{ix},
\qquad
\phi_2(x)=a_n e^{inx},
\]
where $a_n\in \mathbb C$ to be determined later. 
Set
\[
u_0=\phi_1+\phi_2 = e^{ix} + a_n e^{inx}.
\]

Consider the high-low-low to high interaction in \eqref{eq:thirdFrechet} corresponding to
\[
k_1=k_2= 1,
\qquad
k_3=k=n.
\]
In this configuration, we have
\[
\Omega(\vec{k})
=
\omega(n)-\omega(1)+\omega(1)-\omega(n) = 0,
\]
so the time integral produces a factor of $t$. 
Therefore,
\[
c_\nu^{-1}\mathcal F_x[{d_0^3\Phi_t(u_0,u_0,u_0)}] (n)
=
n^{2\beta}
e^{it\omega(n)}
t\big(2a_n+|a_n|^2a_n\big).
\]
Here the first term comes from the two high--low--low interactions, while
the second is the all-high interaction.  Taking $a_n>0$, the two
contributions have the same sign.
Using Plancherel’s theorem, we deduce
\[
\|d_0^3\Phi_t(u_0,u_0,u_0)\|_{H^s}^2
\gtrsim
n^{4\beta+2s}
t^2 
|a_n|^2.
\]
We now choose
\[
a_n=n^{-s}, 
\]
so that $\|\phi_2\|_{H^s}\sim1$ and thus $\|u_0\|_{H^s}\sim1$. 
With this normalization,
\[
\|d_0^3\Phi_t(u_0,u_0,u_0)\|_{H^s}
\gtrsim
n^{2\beta} t.
\]

\medskip

Letting $n\to\infty$, 
we see that the third derivative becomes unbounded whenever $\beta>0$. 
Thus the solution map fails to be $C^3$ at the origin for any $\beta>0$. 
Consequently, for $\beta>0$ the IVP \eqref{MMTEq} cannot be solved by the contraction mapping principle. 
To establish well-posedness, we instead rely on a compactness argument combined with \emph{a priori} energy estimates.


\section{Resonance Relation and Strichartz Estimates} \label{sec-4}


The dispersion of \eqref{MMTEq} is exploited in two complementary ways:
either through Strichartz estimates, or via lower bounds on the resonance
relation.  In frequency interactions where the resonance relation does not
yield a sufficiently strong lower bound, we rely instead on Strichartz
estimates.

We begin by recording the resonance structure due to the convexity of the dispersion relation $w (k) = |k|^\alpha$ for $\alpha >1$.

\begin{lem}[Resonance Relation] \label{ResonanceRelation}
Let $1<\alpha\le 2$. 
Under the frequency constraint
\[
\xi_1-\xi_2+\xi_3-\xi_4=0,
\]
the resonance function associated to \eqref{MMTEq} is
\begin{equation}\label{resonance}
\Omega(\xi_1,\xi_2,\xi_3,\xi_4)
:=|\xi_1|^\alpha-|\xi_2|^\alpha+|\xi_3|^\alpha-|\xi_4|^\alpha.
\end{equation}
Moreover, it satisfies the lower bound
\begin{equation}\label{resonance-bound}
|\Omega|
\gtrsim
|\xi_1-\xi_2|\,|\xi_2-\xi_3|\,
|\xi_{\max}|^{\alpha-2},
\end{equation}
where
\[
|\xi_{\max}|:=\max\{|\xi_1|,|\xi_2|,|\xi_3|,|\xi_4|\}.
\]
\end{lem}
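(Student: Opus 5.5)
We plan to parametrize the constraint hypersurface and write $\Omega$ as a double integral of $\omega''$, where $\omega(\xi)=|\xi|^\alpha$. Set $a:=\xi_1-\xi_2$ and $b:=\xi_3-\xi_2$, so that $|b|=|\xi_2-\xi_3|$. The constraint $\xi_1-\xi_2+\xi_3-\xi_4=0$ then gives $\xi_1=\xi_2+a$, $\xi_3=\xi_2+b$ and $\xi_4=\xi_2+a+b$. Hence
\[
\Omega=\omega(\xi_2+a)-\omega(\xi_2)+\omega(\xi_2+b)-\omega(\xi_2+a+b).
\]
If $a=0$ or $b=0$, this vanishes identically, and so does the right-hand side of \eqref{resonance-bound}. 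We may therefore assume $ab\neq0$, which in particular gives $\xi_{\max}\neq0$.

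The key step is the identity
\[
\Omega=-\int_0^a\!\!\int_0^b \omega''(\xi_2+x+y)\,dy\,dx .
\]
To justify it, we first observe that $\omega'(\xi)=\alpha|\xi|^{\alpha-2}\xi$ is absolutely continuous. Its derivative is $\omega''(\xi)=\alpha(\alpha-1)|\xi|^{\alpha-2}$, which is locally integrable precisely because $\alpha>1$; for $\alpha=2$ it is the constant $2$. So the fundamental theorem of calculus applies twice. First,
\[
\omega(\xi_2+a+b)-\omega(\xi_2+b)-\omega(\xi_2+a)+\omega(\xi_2)=\int_0^a\bigl(\omega'(\xi_2+x+b)-\omega'(\xi_2+x)\bigr)\,dx,
\]
and then we expand the inner difference as $\int_0^b\omega''(\xi_2+x+y)\,dy$. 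This is the only point needing care, since for $\alpha<2$ the function $\omega''$ is singular at the origin. The singularity is integrable, however, and Fubini's theorem applies to the nonnegative integrand.

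Since $\omega''>0$ almost everywhere, the integrand has a fixed sign, and the oriented double integral has absolute value equal to the integral of $\omega''$ over the rectangle between $0$ and $a$, and between $0$ and $b$. Thus
\[
|\Omega|=\iint_{\mathrm{rect}}\omega''(\xi_2+x+y)\,dy\,dx .
\]
For $(x,y)$ in this rectangle, $x+y$ lies between $\min\{0,a,b,a+b\}$ and $\max\{0,a,b,a+b\}$. Hence $\xi_2+x+y$ lies in the convex hull of $\{\xi_1,\xi_2,\xi_3,\xi_4\}$, and so $|\xi_2+x+y|\le|\xi_{\max}|$. Because $\alpha-2\le0$, this yields the pointwise bound $\omega''(\xi_2+x+y)\ge\alpha(\alpha-1)|\xi_{\max}|^{\alpha-2}$. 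Integrating over the rectangle, whose area is $|a||b|$, gives
\[
|\Omega|\ge\alpha(\alpha-1)\,|\xi_1-\xi_2|\,|\xi_2-\xi_3|\,|\xi_{\max}|^{\alpha-2},
\]
which is \eqref{resonance-bound}, with implicit constant $\alpha(\alpha-1)>0$.
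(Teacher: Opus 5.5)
Your proof is correct and is essentially the paper's argument. The paper also applies the fundamental theorem of calculus twice to write $\Omega=(\xi_1-\xi_2)(\xi_2-\xi_3)\int_0^1\!\int_0^1\omega''\bigl(\xi_3+(\xi_2-\xi_3)\mu+(\xi_1-\xi_2)\theta\bigr)\,d\mu\,d\theta$, which is your rectangle integral after rescaling. It then bounds $\omega''\gtrsim|\xi_{\max}|^{\alpha-2}$ using $\alpha\le 2$; your convex-hull observation makes the paper's ``bounded by $O(|\xi_{\max}|)$'' step exact and gives the explicit constant $\alpha(\alpha-1)$.
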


\begin{proof}
Recall $\omega (x) = |x|^\alpha$.  
By the mean value theorem,
\begin{equation}\label{ResAppen1}
|\xi_1|^\alpha-|\xi_2|^\alpha
=
(\xi_1-\xi_2)
\int_0^1
\omega'\big(\xi_2+(\xi_1-\xi_2)\theta\big)\,d\theta,
\end{equation}
where $\omega' (\xi) = \alpha |\xi|^{\alpha - 1} \sgn (\xi)$ is continuous everywhere since $\alpha > 1$. Similarly,
\begin{equation}\label{ResAppen2}
\begin{split}
|\xi_3|^\alpha-|\xi_4|^\alpha 
& = |\xi_3|^\alpha-|\xi_1 - \xi_2 + \xi_3|^\alpha \\
& =
(\xi_2-\xi_1)
\int_0^1
\omega'\big(\xi_3+(\xi_1-\xi_2)\theta\big)\,d\theta,
\end{split}
\end{equation}
where we used the constraint $\xi_1-\xi_2+\xi_3-\xi_4=0$, we have
$\xi_4= \xi_1-\xi_2 + \xi_3$.
Hence combining \eqref{ResAppen1} and \eqref{ResAppen2},
\[
\Omega
=
(\xi_1-\xi_2)
\int_0^1
\Big(
\omega'(\xi_2+(\xi_1-\xi_2)\theta)
-
\omega'(\xi_3+(\xi_1-\xi_2)\theta)
\Big)
\,d\theta.
\]

Applying the mean value theorem once more to the difference inside the
integral yields
\[
\omega'(a)-\omega'(b)
=
(a-b)
\int_0^1
\omega''\big(b+\mu(a-b)\big)\,d\mu,
\]
since $\omega' (\xi)$ is continuous everywhere and $\omega ''(\xi) = \alpha (\alpha - 1) |\xi|^{\alpha - 2}$ exists a.e. and is integrable on $[0,1]$ since $\alpha > 1$.
Here, set $a=\xi_2+(\xi_1-\xi_2)\theta$ and
$b=\xi_3+(\xi_1-\xi_2)\theta$.  
A straightforward computation gives
\[
a-b=\xi_2-\xi_3.
\]
Therefore
\[
\Omega
=
(\xi_1-\xi_2)(\xi_2-\xi_3)
\int_0^1\!\!\int_0^1
\omega''\big(\xi_3+(\xi_2-\xi_3)\mu+(\xi_1-\xi_2)\theta\big)
\,d\mu\,d\theta.
\]

Finally, since $\omega''(x)=\alpha(\alpha-1)|x|^{\alpha-2}$ and
$1<\alpha\le 2$, we have
\[
\omega''(x)
\sim
|x|^{\alpha-2}.
\]
On the domain of integration, the argument of $\omega''$, i.e. $ \xi_3+(\xi_2-\xi_3)\mu+(\xi_1-\xi_2) \theta$ is bounded by $O(|\xi_{\max}|)$, which yields
\[
|\Omega|
\gtrsim
|\xi_1-\xi_2|\,|\xi_2-\xi_3|\,
|\xi_{\max}|^{\alpha-2},
\]
since $\alpha \le 2$.
This completes the proof.
\end{proof}

We now consider the relevant Strichartz estimates.
\begin{lem}[Strichartz Estimate] \label{StrichartzPrimary}
Let $1 < \alpha \leq 2$, and let $S(t)$ be the linear propagator defined in \eqref{LinProp}. 
Then for $u \in X^{0,3/8}$, the following Strichartz estimate holds:
\begin{equation} \label{StrichartzTrans}
\|P_{N}u\|_{L^4(\R \times \T)} 
\lesssim 
N^{\frac{2-\alpha}{8}}
\|P_N u\|_{X^{0,3/8}}.
\end{equation}
As a consequence, for any $u_0 \in L^2(\T)$ and any $0 < T \leq 1$, we have
\begin{equation} \label{Strichartz}
\|S(t)P_N u_0\|_{L^4([0,T]\times \T)} 
\lesssim 
N^{\frac{2-\alpha}{8}}
T^{\frac{1}{8}}
\|P_N u_0\|_{L^2(\T)}.
\end{equation}
\end{lem}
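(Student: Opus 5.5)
The plan is the classical Bourgain/Tao $L^4$ argument. We square the $L^4$ norm and count frequencies, using the lower bound on the two-wave resonance that comes from convexity of $|k|^\alpha$. The estimate then follows by interpolation.

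\textbf{Bilinear setup.} Write $\|P_Nu\|_{L^4}^2=\|(P_Nu)^2\|_{L^2}$. Decompose $P_Nu=\sum_{L}Q_LP_Nu$ into dyadic modulation pieces. It then suffices to prove a bilinear bound for $u_1=Q_{L_1}P_Nu$ and $u_2=Q_{L_2}P_Nu$ with $L_1\le L_2$:
\[
\|u_1u_2\|_{L^2_{tx}}\lesssim N^{\frac{2-\alpha}{4}}L_1^{1/2}L_2^{1/4}\|u_1\|_{L^2}\|u_2\|_{L^2}.
\]
Summing over $L_1,L_2$ with Cauchy--Schwarz gives $N^{\frac{2-\alpha}{4}}\|P_Nu\|_{X^{0,3/8}}^2$. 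The geometric mean of $L_1^{1/2}L_2^{1/4}$ yields the $3/8$ exponent, and the exponent excess is handled by the usual dyadic splitting (taking $3/8+$ if needed and then refining).

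\textbf{Bilinear bound.} By Plancherel and Cauchy--Schwarz in $(k_1,\tau_1)$, the bilinear estimate reduces to bounding
\[
\sup_{k,\tau}\#\{k_1:\ |k_1|,|k-k_1|\sim N,\ |\tau-|k_1|^\alpha-|k-k_1|^\alpha|\lesssim L_2\}\cdot L_1 .
\]
The $\tau_1$-integration contributes $L_1$, where $L_1=\min(L_1,L_2)$ because the $\tau_1$-window has length $\min(L_1,L_2)$. Consider $f(k_1)=|k_1|^\alpha+|k-k_1|^\alpha$. Its derivative is $f'(k_1)=\omega'(k_1)-\omega'(k-k_1)$, which by the mean value argument of Lemma \ref{ResonanceRelation} has size $\gtrsim|2k_1-k|N^{\alpha-2}$.

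\textbf{Counting.} Set $m=2k_1-k$. The constraint becomes $g(m)\in I$ with $|I|\sim L_2$, and $g$ has derivative $\gtrsim |m|N^{\alpha-2}$ and is essentially quadratic-like. Split the count into $|m|\le M$ (at most $M$ points) and $|m|\sim M'$ dyadic with $M'>M$. On the latter, Lemma \ref{MVTCounting} gives at most $L_2N^{2-\alpha}/M'+1$ points. Optimizing with $M=(L_2N^{2-\alpha})^{1/2}$ gives a count $\lesssim 1+(L_2N^{2-\alpha})^{1/2}$; the logarithm from dyadic summation is avoided because $g$ is monotone on each half-line, so a single application over $|m|>M$ suffices. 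Hence
\[
\|u_1u_2\|_{L^2}\lesssim \big(L_1(1+L_2^{1/2}N^{\frac{2-\alpha}{2}})\big)^{1/2}\|u_1\|_{L^2}\|u_2\|_{L^2}\lesssim N^{\frac{2-\alpha}{4}}L_1^{1/2}L_2^{1/4}\|u_1\|_{L^2}\|u_2\|_{L^2}.
\]
Taking square roots in the final $L^4$ bound turns $N^{\frac{2-\alpha}{4}}$ into $N^{\frac{2-\alpha}{8}}$, giving \eqref{StrichartzTrans}.

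\textbf{The main obstacle} is the summation over modulations. The bilinear factor $L_1^{1/2}L_2^{1/4}$ has total exponent $3/4$, which matches $X^{0,3/8}$ squared, but the bound is not symmetric. To sum it, I would write $L_1^{1/2}L_2^{1/4}=(L_1L_2)^{3/8}(L_1/L_2)^{1/8}$. The factor $(L_1/L_2)^{1/8}$ with $L_1\le L_2$ gives geometric decay, so Schur's test closes the double sum without loss.

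\textbf{Consequence \eqref{Strichartz}.} Apply \eqref{StrichartzTrans} to $\varphi(t/T)S(t)P_Nu_0$. Its $X^{0,3/8}$ norm is $\|\varphi(\cdot/T)\|_{H^{3/8}_t}\|P_Nu_0\|_{L^2}\lesssim T^{1/2-3/8}\|P_Nu_0\|_{L^2}$ for $T\le1$, which gives the factor $T^{1/8}$.
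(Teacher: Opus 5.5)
Your proposal is correct and matches the paper's proof step by step. It uses the same modulation-localized bilinear bound $N^{\frac{2-\alpha}{4}}L_1^{1/2}L_2^{1/4}$ (via Plancherel, Cauchy--Schwarz, and counting $k_1$ split at $|2k_1-k|\sim (L_2N^{2-\alpha})^{1/2}$, with Lemma \ref{MVTCounting} on the complement), the same summation through the decay factor $(L_1/L_2)^{1/8}$, and the same cutoff scaling $\|\varphi(\cdot/T)\|_{H^{3/8}_t}\lesssim T^{1/8}$ for \eqref{Strichartz}. Your aside about passing to $3/8+$ and refining is unnecessary, since that decay factor already closes the sum at exactly $3/8$.
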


When $\alpha \ge 2$, Lemma \ref{StrichartzPrimary} was already established in \cite[Lemma 2.7]{FS-22}, also see \cite[Lemma 4.1]{BLLZ-23}. 
Therefore, we restrict our attention to the case $1 < \alpha \le 2$. Note that when $\alpha < 2$, the Strichartz estimate \eqref{StrichartzTrans} has a derivative loss of $N^{\frac{2-\alpha}{8}}$ comparing the cases $\alpha \ge 2$, which introduces an additional difficulty in the construction of solutions.

A proof can be found in \cite[Corollary 2.11]{ST-2020}, but we provide our own for completeness. The overall strategy of the proof follows closely that of \cite[Lemma 3.1 and Lemma 3.2]{MolinetTanakaUncond}. 
The principal ingredient in our argument is the resonance estimate stated in Lemma \ref{ResonanceRelation}. 

\begin{proof}[Proof of Lemma \ref{StrichartzPrimary}]
We first prove \eqref{StrichartzTrans}. 
The case $\alpha=2$ is classical and was established by Bourgain \cite{Bourgain-93}. 
Thus we focus on the regime $1<\alpha<2$.

By the triangle inequality,
\begin{align}
\begin{split}
\|P_N u\|_{L^4_{tx}}^2
& = \|(P_N u)^2\|_{L^2_{tx}} \\
& = \bigg\|\sum_{L_1,L_2} Q_{L_1}P_N u \; Q_{L_2}P_N u \bigg\|_{L^2_{tx}} \\
&\lesssim
\sum_{L_1\le L_2}
\|Q_{L_1}P_N u \; Q_{L_2}P_N u\|_{L^2_{tx}}.
\end{split}
\label{StrichartzPf1}
\end{align}
We claim that for $L_1\le L_2$,
\begin{equation}\label{StrichartzPffSuff}
\|Q_{L_1}P_N u \; Q_{L_2}P_N u\|_{L^2_{tx}}
\lesssim
N^{\frac{2-\alpha}{4}}
L_1^{1/2}L_2^{1/4}
\|Q_{L_1}P_N u\|_{L^2_{tx}}
\|Q_{L_2}P_N u\|_{L^2_{tx}}.
\end{equation}
Assuming \eqref{StrichartzPffSuff}, 
we now complete the proof of \eqref{StrichartzTrans}.
Write $L_2=2^kL_1$ with $k\ge0$. Then, by \eqref{StrichartzPffSuff}, we have
\begin{align*}
\eqref{StrichartzPf1}
&\lesssim
N^{\frac{2-\alpha}{4}}
\sum_{k\ge0}
\sum_{L_1}
L_1^{1/2}(2^kL_1)^{1/4}
\|Q_{L_1}P_N u\|_{L^2}
\|Q_{2^kL_1}P_N u\|_{L^2} \\
&=
N^{\frac{2-\alpha}{4}}
\sum_{k\ge0}
2^{-k/8}
\sum_{L_1}
L_1^{3/8}\|Q_{L_1}P_N u\|_{L^2}
(2^kL_1)^{3/8}\|Q_{2^kL_1}P_N u\|_{L^2}.
\end{align*}
Applying Cauchy–Schwarz in $L_1$,
\[
\eqref{StrichartzPf1}
\lesssim
N^{\frac{2-\alpha}{4}}
\sum_{k\ge0}
2^{-k/8}
\Big(
\sum_{L_1} L_1^{3/4}\|Q_{L_1}P_N u\|_{L^2}^2
\Big)^{1/2}
\Big(
\sum_{L_1}(2^kL_1)^{3/4}\|Q_{2^kL_1}P_N u\|_{L^2}^2
\Big)^{1/2}.
\]
Since $\sum_{k\ge0}2^{-k/8}<\infty$, this yields
\[
\|P_N u\|_{L^4_{tx}}^2
\lesssim
N^{\frac{2-\alpha}{4}}
\|P_N u\|_{X^{0,3/8}}^2,
\]
which completes the reduction. 
Thus, for \eqref{StrichartzTrans}, 
it remains to prove \eqref{StrichartzPffSuff}.

\medskip

By Plancherel and Cauchy–Schwarz in $(n_1,\tau_1)$,
\begin{align*}
\|Q_{L_1}P_N u \; Q_{L_2}P_N u\|_{L^2_{tx}}^2
&=
\int_\tau \sum_n
\bigg|
\int_{\tau_1}\sum_{n_1}
\widehat{Q_{L_1}P_N u}(n_1,\tau_1)
\widehat{Q_{L_2}P_N u}(n-n_1,\tau-\tau_1)
\bigg|^2 \\
&\lesssim
\sup_{\tau,n} A(\tau,n)\,
\|Q_{L_1}P_N u\|_{L^2_{tx}}^2
\|Q_{L_2}P_N u\|_{L^2_{tx}}^2,
\end{align*}
where
\[
A(\tau,n)
:=
\sum_{n_1\in\Z}\int_{\R}
\mathbbm{1}_{\supp\widehat{Q_{L_1}P_N u}}(n_1,\tau_1)
\mathbbm{1}_{\supp\widehat{Q_{L_2}P_N u}}
(n-n_1,\tau-\tau_1)\,d\tau_1.
\]

Hence it suffices to show
\[
\sup_{\tau,n} A(\tau,n)
\lesssim
N^{\frac{2-\alpha}{2}}L_1L_2^{1/2}.
\]
Using the frequency and modulation localization,
\begin{align*}
A(\tau,n)
&\lesssim
L_1
\#\Big\{
n_1:
|n_1|\sim|n-n_1|\sim N,\;
|\tau-|n_1|^\alpha-|n-n_1|^\alpha|
\lesssim L_2
\Big\} \\
&=: L_1\,\# B(\tau,n).
\end{align*}
To bound $\# B(\tau,n)$, fix $R>0$ (to be chosen) and split
\[
B(\tau,n)
=
\Big( B(\tau,n) \cap \{ |n-2n_1|\le R\}\Big)
\cup \Big( B(\tau,n) \cap 
\{ |n-2n_1|>R\} \Big).
\]
The first set contains at most $R+1$ elements:
\begin{equation}\label{CountingBound1}
\#\{ |n-2n_1|\le R\}\lesssim R+1.
\end{equation}
For the second set, define
\[
f(n_1)=\tau-|n_1|^\alpha-|n-n_1|^\alpha.
\]
Then
\[
\begin{split}
|f'(n_1)|
& =
\alpha\big|
(n-n_1)|n-n_1|^{\alpha-2}
-
n_1|n_1|^{\alpha-2}
\big|\\
& = (\alpha - 1) |n - 2 n_1| \int_0^1 |n_1 + s (n-2n_1)|^{\alpha - 2} ds \\
& \gtrsim |n-2n_1|\,N^{\alpha-2} \gtrsim
R\,N^{\alpha-2},
\end{split}
\]
where we used $|n_1|\sim|n-n_1|\sim N$, $1<\alpha \le 2$, and $|n-2n_1|>R$.
The set $|n-2n_1|>R$ is the union of at most two integer intervals. Applying
Lemma~\ref{MVTCounting} on each interval,
\begin{equation}\label{CountingBound2}
\# \Big( B(\tau,n) \cap 
\{ |n-2n_1|>R\} \Big)
\lesssim
\frac{L_2}{R N^{\alpha-2}}+1.
\end{equation}
Choosing $R\sim (L_2 N^{2-\alpha})^{1/2}$ balances
\eqref{CountingBound1} and \eqref{CountingBound2}, yielding
\[
\#B (\tau,n)
\lesssim
N^{\frac{2-\alpha}{2}}L_2^{1/2}.
\]
Therefore
\[
A(\tau,n)
\lesssim
N^{\frac{2-\alpha}{2}}L_1L_2^{1/2},
\]
which proves \eqref{StrichartzPffSuff} and hence
\eqref{StrichartzTrans}.

\medskip

Finally, to prove \eqref{Strichartz}, let $\psi\in C_c^\infty(\R)$ satisfy
$\psi\equiv1$ on $[-1,1]$ and $\psi=0$ outside $[-2,2]$.
Set $\psi_T(t)=\psi(t/T)$. Then
\begin{align*}
\|S(t)P_N u_0\|_{L^4([0,T]\times\T)}
&\le
\|\psi_T S(t)P_N u_0\|_{L^4_{tx}} \\
&\lesssim
N^{\frac{2-\alpha}{8}}
\|\psi_T P_N u_0\|_{H^{3/8}_tL^2_x} \\
&=
N^{\frac{2-\alpha}{8}}
\|\psi_T\|_{H^{3/8}_t}
\|P_N u_0\|_{L^2_x}.
\end{align*}
Since $\|\psi_T\|_{H^{3/8}_t}\lesssim T^{1/8}\|\psi\|_{H^{3/8}}$,
the desired estimate follows.
\end{proof}

We use the following values to simplify later calculations.
\begin{definition}
\label{DEF:kg}
For $1 < \alpha \leq 2$ and $\beta\geq0$, we define
\begin{equation}
\kappa(\alpha,\beta):= \frac{\beta}{4} + \frac{2-\alpha}{8}, \hspace{1cm} \gamma(\alpha,\beta) := \beta + 2\kappa(\alpha,\beta) = \frac{3}{2}\beta + \frac{2-\alpha}{4}.
\end{equation}
\end{definition}

The manner in which we exploit the Strichartz estimates is summarized in the following corollary.

\begin{cor}\label{StrichartzSecondary}
Let $s > \frac12$ and $0<T\le 1$. 
Suppose $u \in L^\infty([0,T];H^s_\omega(\T))$ is a distributional
solution to \eqref{MMTEq} with initial data $u_0 \in H^s(\T)$ on $[0,T]$. 
Then the following estimate holds:
\begin{equation}\label{ell4L4Bound}
\left(
\sum_N 
 \omega_N^4\|D_x^{\,s-\kappa(\alpha,\beta)} P_N u\|_{L^4([0,T]\times\T)}^4
\right)^{1/4}
\lesssim 
T^{\frac18}
\big(1 + T\|u\|_{L^\infty_T H^s}^2\big)
\|u\|_{L^\infty_T H^s_\omega}.
\end{equation}
\end{cor}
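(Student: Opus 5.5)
The plan is to run the frequency-dependent time-slicing argument of Molinet--Tanaka. For each dyadic $N$ we cut $[0,T]$ into intervals adapted to $N$, apply the Duhamel formula \eqref{Duhamel} on each interval, estimate each piece with the Strichartz estimate \eqref{Strichartz}, and then sum over the intervals and over $N$.

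\textbf{Step 1: Duhamel on short intervals.} First we justify the Duhamel formula. Since $u\in L^\infty_TH^s$ with $s>\frac12$, the space $H^s$ is an algebra, so $D_x^{2\beta}(|u|^2u)\in L^\infty_TH^{s-2\beta}$. The distributional formulation then shows that $u\in C([0,T];H^{s-2\beta})$ and that, for any $t_j\in[0,T]$,
\[
P_Nu(t)=S(t-t_j)P_Nu(t_j)-i\nu\int_{t_j}^tS(t-t')P_ND_x^{2\beta}(|u|^2u)(t')\,dt'.
\]
Fix $N$ and set $\delta_N:=T N^{-2\beta}$, so that $\delta_N\le T$. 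Split $[0,T]$ into $J_N\sim T/\delta_N=N^{2\beta}$ intervals $I_j$ of length $\le\delta_N$. On each $I_j$, apply \eqref{Strichartz} (after translating in time) to the linear term. For the Duhamel term, apply Minkowski's inequality in $t'$ and then \eqref{Strichartz} again. This gives
\[
\|P_Nu\|_{L^4(I_j\times\T)}\lesssim N^{\frac{2-\alpha}8}\delta_N^{\frac18}\Big(\|P_Nu(t_j)\|_{L^2}+N^{2\beta}\int_{I_j}\|P_N(|u|^2u)\|_{L^2}\,dt'\Big).
\]

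\textbf{Step 2: summing over intervals.} Raise this to the fourth power and sum over $j$. For the nonlinear part, use $N^{2\beta}\int_{I_j}\le N^{2\beta}\delta_N\sup_t=T\sup_t$, which is where the factor $T\|u\|^2_{L^\infty_TH^s}$ comes from. The resulting factor in front is
\[
J_N\,N^{\frac{2-\alpha}2}\delta_N^{\frac12}\sim T^{\frac12}N^{\beta+\frac{2-\alpha}2}=T^{\frac12}N^{4\kappa(\alpha,\beta)},
\]
which is exactly the loss $N^{\kappa}$ in \eqref{ell4L4Bound} after taking fourth roots. It remains to handle the sums over $N$ of the quantities
\[
\omega_N^4N^{4s}\sup_t\|P_Nu(t)\|^4_{L^2}
\quad\text{and}\quad
\omega_N^4N^{4s}\sup_t\|P_N(|u|^2u)(t)\|^4_{L^2}.
\]

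\textbf{Step 3: the sum over $N$ (main obstacle).} The difficulty is that $\ell^4_NL^\infty_t$ is not obviously controlled by $L^\infty_t\ell^2_N$, since the supremum sits inside the dyadic sum.

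For the linear part, we exploit the freedom in choosing $t_j\in I_j$. We pick $t_j$ so that $\|P_Nu(t_j)\|_{L^2}^2\le |I_j|^{-1}\int_{I_j}\|P_Nu\|_{L^2}^2$, with the factor $\delta_N$ bookkept so that it recombines with the count $J_N$. We then bound
\[
\|P_Nu(t_j)\|^4\le \sup_t\|P_Nu(t)\|^2\cdot \|P_Nu(t_j)\|^2 .
\]
The first factor is bounded uniformly in $N$: $\omega_N^2N^{2s}\sup_t\|P_Nu\|^2\le\|u\|^2_{L^\infty_TH^s_\omega}$. For the second factor, summing over $j$ and $N$ gives a time average of $\|u\|_{H^s_\omega}^2$, which is again bounded by $\|u\|_{L^\infty_TH^s_\omega}^2$.

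For the nonlinear part, we apply the same averaging trick. The resulting time average of $\||u|^2u\|^2_{H^s_\omega}$ is controlled by \eqref{SobolevMultiplicationWeighted} and the embedding $H^s\hookrightarrow L^\infty$ for $s>\frac12$, giving
\[
\||u|^2u\|_{H^s_\omega}\lesssim\|u\|^2_{H^s}\|u\|_{H^s_\omega}.
\]

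\textbf{Step 4: assembling.} Taking fourth roots of the combined bounds gives \eqref{ell4L4Bound}, with the factor $T^{1/8}$ and with $(1+T\|u\|^2_{L^\infty_TH^s})\|u\|_{L^\infty_TH^s_\omega}$ on the right. The low frequencies $N\lesssim1$ and the mode $P_0$ are handled by the same argument with a single interval.
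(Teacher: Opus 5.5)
Your proposal is correct and follows the paper's proof. The two arguments share the slicing into $\sim N^{2\beta}$ intervals of length $TN^{-2\beta}$, Duhamel from a point $t_j$ where the norm is at most its average, Strichartz (Minkowski in $t'$ suffices in place of Christ--Kiselev), and the conversion of $\sum_j$ into $\int_0^T$ so that the $N$-sum can be taken inside. The only cosmetic difference is that you split the fourth power as $\sup_t(\cdot)^2$ times an $L^2$ time average, whereas the paper takes $t_j$ minimizing the fourth power and uses $\ell^2\hookrightarrow\ell^4$ pointwise in time.

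For the Duhamel piece, write out the split
$(\int_{I_j}\|P_NF\|_{L^2}\,dt)^4\le|I_j|^3\sup_{I_j}\|P_NF\|_{L^2}^2\int_{I_j}\|P_NF\|_{L^2}^2\,dt$
explicitly, or use H\"older as the paper does. The crude bound in your Step~2 alone still leaves the problematic $\ell^4_NL^\infty_t$ quantity.
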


\begin{proof}
It suffices to consider $N\ge1$. We partition the time interval into subintervals
$\{I_{j,N}\}_{j\in J_N}$ such that
\[
\bigcup_{j\in J_N} I_{j,N} = [0,T],
\qquad
|I_{j,N}|\sim T N^{-\theta},
\qquad
|J_N|\lesssim N^\theta,
\]
where $\theta>0$ will be chosen later.

For each fixed $N$, the projected equation and $s>1/2$ imply
$P_Nu\in W^{1,\infty}([0,T];L^2(\T))$: indeed, after applying $P_N$, all
spatial multipliers are bounded and $|u|^2u\in L^\infty_TH^s$.  Thus
$t\mapsto \|D_x^s P_N u(t)\|_{L^2(\T)}$ is continuous on $[0,T]$.
for each $I_{j,N}$ we may choose $c_{j,N}\in I_{j,N}$ such that
$\|D_x^s P_N u(t)\|_{L^2}^4$ attains its minimum at $t=c_{j,N}$. 
In particular,
\begin{equation}\label{cjProperty}
|I_{j,N}|\,\|D_x^s P_N u(c_{j,N})\|_{L^2_x}^4
\le
\int_{I_{j,N}}
\|D_x^s P_N u(t)\|_{L^2_x}^4\,dt.
\end{equation}
By the Duhamel formula \eqref{Duhamel}, for $t\in I_{j,N}$,
\begin{equation}\label{XP-1}
P_N u(t)
=
S(t-c_{j,N})P_N u(c_{j,N})
+
\int_{c_{j,N}}^t
S(t-t')P_N D_x^{2\beta}(|u|^2u)(t')\,dt'.
\end{equation}
We estimate the two terms separately.

\medskip

\noindent
For the linear term,
using \eqref{Strichartz}, Bernstein’s inequality, and \eqref{cjProperty},
\begin{align*}
&\Big(
\sum_N \sum_{j\in J_N}
\omega_N^4\|D_x^{s-\kappa(\alpha,\beta)}
S(t-c_{j,N})P_N u(c_{j,N})\|_{L^4(I_{j,N};L^4_x)}^4
\Big)^{1/4} \\
&\qquad
\lesssim
\Big(
\sum_N \sum_{j\in J_N}
\omega_N^4|I_{j,N}|^{1/2}
N^{-\beta}
\|D_x^s P_N u(c_{j,N})\|_{L^2_x}^4
\Big)^{1/4} \\
&\qquad
\lesssim
\Big(
\sum_N \sum_{j\in J_N}
\omega_N^4T^{-1/2}
N^{\theta/2-\beta}
|I_{j,N}|
\|D_x^s P_N u(c_{j,N})\|_{L^2_x}^4
\Big)^{1/4} \\
&\qquad
\lesssim
\Big(
\sum_N
T^{-1/2}
N^{\theta/2-\beta}
\int_0^T
\omega_N^4\|D_x^s P_N u(t)\|_{L^2_x}^4 dt
\Big)^{1/4}.
\end{align*}
Choosing $\theta=2\beta$ and using the embedding
$\ell^2(\N)\hookrightarrow \ell^4(\N)$, we obtain
\[
\lesssim
T^{1/8}\|u\|_{L^\infty_T H^s_\omega}.
\]

\medskip

\noindent
Now, we turn to the Duhamel term.
We next consider
\begin{align*}
&\Big(
\sum_N \sum_{j\in J_N}
\omega_N^4\Big\|
\int_{c_{j,N}}^t
S(t-t')
D_x^{s+2\beta-\kappa(\alpha,\beta)}
P_N(|u|^2u)(t')
\,dt'
\Big\|_{L^4(I_{j,N};L^4_x)}^4
\Big)^{1/4}.
\end{align*}
Using the Christ–Kiselev lemma together with \eqref{Strichartz},
\begin{align*}
\lesssim
\Big(
\sum_N \sum_{j\in J_N}
\omega_N^4|I_{j,N}|^{1/2}
\Big(
\int_{I_{j,N}}
\|D_x^{s+2\beta-\beta/4}
P_N(|u|^2u)(t')\|_{L^2_x}
dt'
\Big)^4
\Big)^{1/4}.
\end{align*}
Applying Bernstein and Hölder,
\begin{align*}
\lesssim
\Big(
\sum_N \sum_{j\in J_N}
N^{7\beta}
|I_{j,N}|^{7/2}
\int_{I_{j,N}}
\omega_N^4\|D_x^s P_N(|u|^2u)(t')\|_{L^2_x}^4 dt'
\Big)^{1/4}.
\end{align*}
Since $|I_{j,N}|\sim T N^{-\theta}$ and $\theta=2\beta$, we deduce
\[
\lesssim
T^{7/8}
\Big(
\sum_{N\ge1} 
\int_0^T
\omega_N^4\|D_x^s P_N(|u|^2u)(t')\|_{L^2_x}^4 dt'
\Big)^{1/4}.
\]
With $\theta=2\beta$, the frequency factor cancels, and using again
$\ell^2(\N)\hookrightarrow\ell^4(\N)$,
\[
\lesssim
T^{9/8}
\||u|^2u\|_{L^\infty_T H^s_\omega}.
\]
Making use of \eqref{SobolevMultiplicationWeighted} and the embedding
$H^s_x \hookrightarrow L^\infty_x$ for $s > 1/2$, the Duhamel contribution is bounded by
\[
\lesssim
T^{9/8}\|u\|_{L^\infty_T H^s}^2\|u\|_{L^\infty_T H^s_\omega}.
\]

\medskip

Combining the linear and nonlinear estimates yields
\eqref{ell4L4Bound}, completing the proof.
\end{proof}

We also obtain an analogous estimate for the difference of two solutions.

\begin{cor} \label{Diffell4L4Bound}
Let $s > \frac12$ and $0<T\le 1$. 
Suppose $u,v \in L^\infty([0,T];H^s(\T))$ are distributional solutions to
\eqref{MMTEq} with initial data $u_0,v_0 \in H^s(\T)$. 
Let $w = u - v$. 
Then the following estimate holds:
\begin{equation} \label{Diffell4L4BoundEq}
\left(
\sum_N 
\|D_x^{\,s-1-\kappa(\alpha,\beta)} P_N w\|_{L^4([0,T]\times\T)}^4
\right)^{1/4}
\lesssim
T^{\frac18}
\big(1 + T(\|u\|_{L^\infty_T H^s}^2 + \|v\|_{L^\infty_T H^s}^2)\big)
\|w\|_{L^\infty_T H^{s-1}}.
\end{equation}
\end{cor}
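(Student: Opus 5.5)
We would follow the proof of Corollary~\ref{StrichartzSecondary} verbatim, now applied to the difference equation at regularity $s-1$ and without the frequency envelope. Subtracting the equations gives, in the distributional sense on $[0,T]$,
\[
i\partial_t w+(-\partial_x^2)^{\frac\alpha2}w=\nu D_x^{2\beta}\big(|u|^2u-|v|^2v\big),
\]
where $|u|^2u-|v|^2v=w(|u|^2+\bar u v)+v^2\bar w$ after expanding, i.e.\ a sum of trilinear terms, each containing exactly one factor of $w$ or $\bar w$ and two factors from $\{u,v,\bar u,\bar v\}$. As before, for each dyadic $N\ge1$ we partition $[0,T]$ into intervals $I_{j,N}$ with $|I_{j,N}|\sim TN^{-2\beta}$, $|J_N|\lesssim N^{2\beta}$. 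Since $P_Nw\in W^{1,\infty}([0,T];L^2)$ (all multipliers are bounded after applying $P_N$), we may choose $c_{j,N}$ minimizing $\|D_x^{s-1}P_Nw(t)\|_{L^2}$ on $I_{j,N}$. We then write the Duhamel formula \eqref{XP-1} for $P_Nw$ based at $c_{j,N}$.

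For the linear part we use \eqref{Strichartz} and Bernstein, exactly as before with $s$ replaced by $s-1$. This gives a contribution
\[
T^{1/8}\Big(\sum_N\sup_t\|D_x^{s-1}P_Nw\|_{L^2}^4\Big)^{1/4}\lesssim T^{1/8}\|w\|_{L^\infty_TH^{s-1}},
\]
using $\ell^2\hookrightarrow\ell^4$. For the Duhamel part, Christ--Kiselev, \eqref{Strichartz}, Bernstein and H\"older reproduce the same bookkeeping: the factor $N^{7\beta}|I_{j,N}|^{7/2}$ cancels with $\theta=2\beta$. We are thus reduced to bounding
\[
T^{9/8}\,\big\||u|^2u-|v|^2v\big\|_{L^\infty_TH^{s-1}}.
\]

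The only genuinely new step is this trilinear estimate at negative-ish regularity $s-1$ (note $s-1$ may be negative, in $(-\tfrac12,\infty)$). Here \eqref{SobolevMultiplicationWeighted} is unavailable, so we instead use the Sobolev multiplication law \eqref{SobolevMultiplication}. Apply it with $s_1=s-1$ on the $w$ factor, $s_2=s$ on the product of the two other factors, and $s_3=s-1$. The hypotheses hold: $s_1+s_2=2s-1>0$ because $s>\tfrac12$; $s_1\wedge s_2=s-1\ge s_3$; and $s_3=s-1<2s-\tfrac32$ again by $s>\tfrac12$. Combined with the algebra property of $H^s$ for $s>\tfrac12$, this yields
\[
\||u|^2u-|v|^2v\|_{H^{s-1}}\lesssim\big(\|u\|_{H^s}^2+\|v\|_{H^s}^2\big)\|w\|_{H^{s-1}}.
\]
Summing the linear and Duhamel contributions gives \eqref{Diffell4L4BoundEq}.

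I expect the main point of care to be exactly this multiplication step, i.e.\ checking that the strict inequality $s_3<s_1+s_2-\tfrac12$ holds for all $s>\tfrac12$, including when $s-1<0$. A secondary point is justifying the choice of minimizer $c_{j,N}$ for $w$, which only needs continuity of $t\mapsto\|P_Nw(t)\|_{L^2}$ and follows from the projected equation as before.
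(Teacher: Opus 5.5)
Your proposal follows the paper's proof. You rerun the short-time Strichartz argument of Corollary~\ref{StrichartzSecondary} for the equation of $w$ at regularity $s-1$, reduce the Duhamel part to $T^{9/8}\||u|^2u-|v|^2v\|_{L^\infty_TH^{s-1}}$, and close with \eqref{SobolevMultiplication} using $(s_1,s_2,s_3)=(s-1,s,s-1)$, whose hypotheses you checked correctly.

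One displayed step in the linear term needs fixing. The quantity $\big(\sum_N\sup_t\|D_x^{s-1}P_Nw(t)\|_{L^2}^4\big)^{1/4}$ is an $\ell^4_NL^\infty_t$ norm, and it is not controlled by $\|w\|_{L^\infty_TH^{s-1}}$; the embedding $\ell^2\hookrightarrow\ell^4$ only brings you down to $\ell^2_NL^\infty_t$, not $L^\infty_t\ell^2_N$. Instead, keep the bound that the choice of $c_{j,N}$ gives for the fourth power of the linear contribution, namely $T^{-1/2}\sum_N\int_0^T\|D_x^{s-1}P_Nw(t)\|_{L^2}^4\,dt$. Then interchange the sum and the integral and apply $\ell^2\hookrightarrow\ell^4$ pointwise in $t$. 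This yields $T^{1/2}\|w\|_{L^\infty_TH^{s-1}}^4$, exactly as in Corollary~\ref{StrichartzSecondary}.
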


\begin{proof}
Apply the proof of Corollary~\ref{StrichartzSecondary} to the equation for
$w=u-v$. The linear contribution is bounded by
$T^{1/8}\|w\|_{L^\infty_TH^{s-1}}$. For the Duhamel contribution, the same
short-time decomposition reduces the estimate to
\[
T^{9/8}\big\||u|^2u-|v|^2v\big\|_{L^\infty_TH^{s-1}}.
\]
Since $s>1/2$, repeated use of \eqref{SobolevMultiplication} gives
\[
\big\||u|^2u-|v|^2v\big\|_{H^{s-1}}
\lesssim
\big(\|u\|_{H^s}^2+\|v\|_{H^s}^2\big)\|u-v\|_{H^{s-1}}.
\]
Combining these bounds proves \eqref{Diffell4L4BoundEq}.
\end{proof}

\section{Estimates on Solutions}\label{sec-5}


In this section, we establish several estimates for solutions to the IVP \eqref{MMTEq}, including an {\em a priori estimate} that plays a central role in our analysis. We begin with the following lemma.

\begin{lem}\label{BourgainEstimates}
Let $0<T\le1$ and $s>\frac12$. 
Suppose $u \in L^\infty_T H^s_\omega$ is a solution to \eqref{MMTEq} with initial data $u_0 \in H^s$. 
Then $u$ belongs to
\[
Z_{\omega,T}^s := L^\infty_T H^s_\omega \cap X_{\omega,T}^{s-2\beta,1},
\]
and satisfies the bound
\begin{equation}\label{BoundOnBourgain}
\|u\|_{Z_{\omega,T}^s}
\lesssim
\|u\|_{L^\infty_T H^s_\omega}
\big(1+\|u\|_{L^\infty_T H^s}^2\big).
\end{equation}
Moreover, if $u$ and $v$ are solutions with respective initial data $u_0$ and $v_0$, then
\begin{equation}\label{BoundOnBourgainDiff}
\|u-v\|_{Z_T^{s-1}}
\lesssim
\|u-v\|_{L^\infty_T H^{s-1}}
\big(1+(\|u\|_{L^\infty_T H^s}+\|v\|_{L^\infty_T H^s})^2\big).
\end{equation}
\end{lem}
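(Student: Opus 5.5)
The plan is to estimate the $X^{s-2\beta,1}_{\omega,T}$ norm directly through the equation. The $L^\infty_TH^s_\omega$ component of $Z^s_{\omega,T}$ is trivially bounded. Using the definition \eqref{BourgainDef} with $b=1$, we have
\[
\|u\|_{X^{s-2\beta,1}_{\omega,T}}\les \|u\|_{L^2_TH^{s-2\beta}_\omega}+\|(i\partial_t+(-\partial_x^2)^{\alpha/2})u\|_{L^2_TH^{s-2\beta}_\omega},
\]
and the time-restriction norm is realised through the extension $\rho_T$, as in the proof of Lemma \ref{rhoBounded}. Since $u$ is a distributional solution, $(i\partial_t+(-\partial_x^2)^{\alpha/2})u=\nu D_x^{2\beta}(|u|^2u)$ in $\mathcal D'((0,T)\times\T)$. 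Once the right-hand side is known to lie in $L^2_TH^{s-2\beta}$, this identity holds in that space.

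An alternative route uses the Duhamel formula \eqref{Duhamel} together with Lemma \ref{StandardLinear}. To use it, we first justify that a distributional $L^\infty_TH^s$ solution satisfies \eqref{Duhamel}. Frequency by frequency this is immediate: for each fixed $k$, $\widehat u(k,t)$ solves an ODE in the distributional sense with right-hand side in $L^\infty_T$. Hence $\widehat u(k,\cdot)$ is absolutely continuous and equals its Duhamel expression, with initial value $\widehat u_0(k)$ coming from the boundary term in Definition \ref{DistributionalSolution}. The estimates \eqref{LinearS} and \eqref{LinearD}, taken at regularity $s-2\beta$, then give
\[
\|u\|_{X^{s-2\beta,1}_{\omega,T}}\les \|u_0\|_{H^{s-2\beta}_\omega}+\|D_x^{2\beta}(|u|^2u)\|_{L^2_TH^{s-2\beta}_\omega}\les \|u_0\|_{H^s_\omega}+T^{1/2}\||u|^2u\|_{L^\infty_TH^s_\omega}.
\]
Here $\|u_0\|_{H^s_\omega}\le\|u\|_{L^\infty_TH^s_\omega}$, because the Fourier-wise continuity just established gives $u(t)\to u_0$ weakly in $H^s_\omega$ as $t\to0$. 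The cubic term is handled by \eqref{SobolevMultiplicationWeighted} applied twice, together with $H^s\hookrightarrow L^\infty$ for $s>1/2$:
\[
\||u|^2u\|_{H^s_\omega}\les \|u\|_{L^\infty}^2\|u\|_{H^s_\omega}\les \|u\|_{H^s}^2\|u\|_{H^s_\omega}.
\]
Taking $T\le1$ then yields \eqref{BoundOnBourgain}.

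For \eqref{BoundOnBourgainDiff}, $w=u-v$ satisfies the difference Duhamel formula, and the same linear estimates at regularity $s-1-2\beta$ give
\[
\|w\|_{X^{s-1-2\beta,1}_T}\les\|w(0)\|_{H^{s-1}}+\||u|^2u-|v|^2v\|_{L^2_TH^{s-1}}.
\]
We write $|u|^2u-|v|^2v$ as a sum of trilinear terms, each containing one factor $w$ or $\bar w$ and two factors drawn from $u,v$ and their conjugates. Each such product $fgh$ with $f\in\{w,\bar w\}$ is estimated by \eqref{SobolevMultiplication} with $s_1=s-1$, $s_2=s$, $s_3=s-1$. The hypotheses require $s_1+s_2=2s-1\ge0$, $\min(s_1,s_2)=s-1\ge s_3$, and $s_3<s_1+s_2-\tfrac12$, i.e.\ $s-1<2s-\tfrac32$, i.e.\ $s>\tfrac12$; all three hold. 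The quadratic factors $gh$ lie in $H^s$ by the algebra property, which is also valid for $s>1/2$. This gives exactly the bound used in Corollary \ref{Diffell4L4Bound}.

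The only delicate step is the rigorous identification of a merely $L^\infty_TH^s$ distributional solution with its Duhamel form, including the attainment of the initial datum. I would handle this per Fourier mode as described above: test Definition \ref{DistributionalSolution} with $\varphi(x,t)=e^{ikx}\chi(t)$. The weighted norms require nothing beyond the weighted product estimate.
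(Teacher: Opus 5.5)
Your argument is correct and follows the paper's proof: the Duhamel formula, Lemma \ref{StandardLinear} at regularity $s-2\beta$ (resp.\ $s-1-2\beta$), and Lemma \ref{SobolevMult} for the cubic term and for the difference of the nonlinearities. Your mode-by-mode justification of \eqref{Duhamel}, of $\|u_0\|_{H^s_\omega}\le\|u\|_{L^\infty_TH^s_\omega}$, and your explicit choice of exponents in Lemma \ref{SobolevMult} fill in details the paper leaves implicit.

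The only slip is in your first display, which you do not end up using. With a constant uniform in $T$, the first term on its right-hand side must be $\|u\|_{L^\infty_TH^{s-2\beta}_\omega}$ rather than the $L^2_T$ norm: a free solution $S(t)u_0$ violates the displayed version as $T\to0$. This is harmless, since you conclude via Duhamel.
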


\begin{proof}
It is sufficient to bound only the $X^{s-2\beta,1}_{\omega,T}$ norm. Since $u$ satisfies the Duhamel formulation \eqref{Duhamel}, by Lemma \ref{StandardLinear} and Lemma \ref{SobolevMult}, we get
\begin{equation}
\begin{split}
\|u\|_{X^{s-2\beta,1}_{\omega,T}} &\lesssim \|u_0\|_{H^{s-2\beta}_\omega} + \|D_x^{2\beta}(|u|^2u)\|_{L^2_T H^{s-2\beta}_\omega} \\
& \lesssim\|u\|_{L^\infty_T H^s_\omega} + \|u\|_{L^\infty_{T}H^s}^2\|u\|_{L^\infty_T H^s_\omega},
\end{split}
\end{equation}
which gives \eqref{BoundOnBourgain}.
The estimate \eqref{BoundOnBourgainDiff} follows by the same argument and the use of telescopic sums.
\end{proof}

We are now in a position to derive {\it a priori} bounds for solutions to \eqref{MMTEq} via energy estimates.

\begin{prop}[A priori estimate]\label{Apriori}
Let $0<T\le1$, $0<\beta\le \alpha/4$, and 
\[
s \ge \max\big(\gamma(\alpha,\beta),\, \tfrac12+\big),
\]
where $\gamma (\alpha,\beta)$ is defined in Definition \ref{DEF:kg}.
Suppose $u \in L^\infty([0,T];H^s_\omega(\T))$ is a solution to \eqref{MMTEq}. 
Then there exist constants $\theta > 0$ and $C>0$, depending on
$\alpha,\beta$, and $s$, such that
\begin{equation}
\|u\|_{L^\infty_T H^s_\omega}^2
\le
\|u_0\|_{H^s_\omega}^2
+
C T^\theta
\|u\|_{L^\infty_T H^s_\omega}
\|u\|_{Z_{\omega,T}^s} \|u\|_{L^\infty_T H^s} \|u\|_{Z^s_T}
\big(1+\|u\|_{L^\infty_T H^s}^2\big)^4.
\end{equation}
\end{prop}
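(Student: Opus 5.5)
We run a frequency-localized energy estimate. Apply $P_N$ to \eqref{MMTEq}, pair with $D_x^{2s}P_Nu$, take real parts, and integrate over $[0,t]$. We also multiply by $\omega_N^2$ and sum over $N$. The linear dispersive term is skew-adjoint, so it drops out, and we get
\[
\|u(t)\|_{H^s_\omega}^2=\|u_0\|_{H^s_\omega}^2+2\nu\sum_N\omega_N^2N^{2s}\,\Im\int_0^t\int_\T D_x^{2\beta}P_N(|u|^2u)\,\overline{P_Nu}\,dx\,dt' .
\]
(Here $N^{2s}$ stands for the multiplier $D_x^{2s}$ on the block.) Write $D_x^{2\beta}$ symmetrically as $D_x^{\beta}$ on each factor of the pairing, so that $\Im\langle D_x^\beta P_N(|u|^2u),D_x^\beta P_Nu\rangle$ appears. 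Then decompose $|u|^2u$ into Littlewood--Paley pieces $P_{N_1}u\,\overline{P_{N_2}u}\,P_{N_3}u$ and split into the usual regimes.

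The first regime is when the highest input frequency $N_1\sim N$ and $N_2,N_3\ll N$. Use Lemma \ref{commutator} to move $D_x^{s+\beta}P_N$ onto $P_{N_1}u$. The main term $|u_{low}|^2\,|D_x^{s+\beta}P_Nu|^2$ is real, so its imaginary part vanishes. This cancellation is essential, since otherwise we face the $n^{2\beta}$ loss of Section \ref{ill-p}. The commutator remainder costs $N^{s+\beta-1}\|\partial_x(|u_{low}|^2)\|$. For the case where the conjugate factor $\overline{P_{N_2}u}$ is high (so the low factors are $u\,u$ rather than $|u|^2$), we pass to the symmetric form through the identity $\Im(a\overline{b})$. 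The remainder gains one derivative, and it can be controlled using $s\ge\beta+1-\alpha/2$ together with the mixed $L^4$ bound of Corollary \ref{StrichartzSecondary}.

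The second regime is high-high interactions ($N_1\sim N_2\gtrsim N$, etc.). Here we insert the extension $\rho_T$ of Lemma \ref{rhoBounded}, replace $\ind_t$ by $\indh+\indl$ as in \eqref{indDecomp}, and decompose each factor in modulation. By Lemma \ref{ResonanceRelation}, the maximal modulation is at least $|\Omega|\gtrsim|\xi_1-\xi_2||\xi_2-\xi_3|N_{\max}^{\alpha-2}$.

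If some factor carries modulation $\gtrsim|\Omega|$, we use the $X^{s-2\beta,1}$ component of $Z^s$, which gains $L^{-1}\sim|\Omega|^{-1}$. The remaining three factors are put in $L^4$ (Corollary \ref{StrichartzSecondary}), or in $L^\infty_tH^s$ together with Sobolev embedding. Lemma \ref{QLlow} handles $\indl$ once $L\gg R$. The $\indh$ piece is bounded by \eqref{indL1Bound}, giving $T\wedge R^{-1}$, with $R$ chosen as a small power of $N_{\max}$. This produces the factor $T^\theta$.

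When all modulations are small, or the resonance bound is weak (for instance $|\xi_1-\xi_2|\sim1$), we instead estimate all four factors in $L^4_{tx}$ via Corollary \ref{StrichartzSecondary}. Each such factor loses $N^{\kappa}$, giving a total derivative count of $2s$ versus $2\beta+4\kappa+\ldots$. This is where $s\ge\gamma(\alpha,\beta)=\beta+2\kappa$ enters: the two high factors absorb $D_x^{s}$ each, and the leftover $N^{2\beta+4\kappa-2s}\le N^{2(\gamma-s)}\lesssim1$ is summable, using the $+$ margin or the $\ell^4$ summation.

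The cubic factors $(1+\|u\|^2_{L^\infty_TH^s})^4$ come from the bounds of Lemma \ref{BourgainEstimates} and Corollary \ref{StrichartzSecondary}. Our bookkeeping places exactly one $\omega$-weighted $L^\infty H^s_\omega$ factor and one $Z^s_\omega$ factor, since the weight sits on the two high frequencies $\sim N$. The other factors are unweighted $L^\infty H^s$ and $Z^s$.

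I expect the main obstacle to be the high–high–high to high interactions with near-resonance: $|\xi_1-\xi_2|$ or $|\xi_2-\xi_3|$ is small while all frequencies are comparable. There the modulation gain is weak and the pure Strichartz bound is exactly critical at $s=\gamma$. I would first try splitting by the size of $|\xi_1-\xi_2|$. Use the resonance gain when $|\xi_1-\xi_2||\xi_2-\xi_3|\gtrsim N^{2-\alpha+\epsilon}$; otherwise, use a refined bilinear counting (as in the proof of Lemma \ref{StrichartzPrimary}) to gain from the restricted frequency set.
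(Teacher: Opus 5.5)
\textbf{The gap is in your first regime} (one high input, two low inputs, high output). You never use the resonance there, and without it the estimate cannot close under the stated hypotheses.

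\emph{The case where $\overline{P_{N_2}u}$ is the high input.} Here there is no cancellation at all. In symmetrized form the symbol is $A=m(k_4)-m(k_3)+m(k_2)-m(k_1)$ with $m(k)=\phi_N(k)^2|k|^{2\beta+2s}$. For $k_2\approx -k_4$ high and $k_1,k_3$ low, $A\approx m(k_2)+m(k_4)\sim N^{2s+2\beta}$, because the two contributions add. So your ``$\Im(a\bar b)$'' symmetrization gains no derivative, and the full $N^{2\beta}$ loss remains. What rescues this configuration is that $|k_1-k_2|\sim|k_2-k_3|\sim N$, so $|\Omega|\gtrsim N^{\alpha}$ by Lemma \ref{ResonanceRelation}. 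The paper (its Case 2, after complex conjugation) turns this into a factor $N^{-\alpha}$ in two pieces:
\begin{itemize}
\item the $\indh$ part, with $R=N^{\alpha-}$, via \eqref{indL1Bound};
\item the $\indl$ part, by putting one factor at modulation $\gtrsim N^{\alpha}$ in $X^{s-2\beta,1}$, at a cost of $N^{2\beta}$.
\end{itemize}
The net factor is $N^{4\beta-\alpha}$. This is exactly where $\beta\le\alpha/4$ is used, and your bookkeeping never uses that hypothesis.

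\emph{The case where $P_{N_1}u$ is the high input.} The main term is indeed real. However, the commutator remainder from Lemma \ref{commutator} has size $N^{2\beta-1}M$ relative to $\|P_Nu\|_{H^s}^2$, where $M\lesssim N_{(3)}$ is the frequency of $P_M(\overline{P_{N_2}u}\,P_{N_3}u)$. For $M\sim N_{(3)}$ close to $N$ this is essentially the full $N^{2\beta}$ loss.

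Corollary \ref{StrichartzSecondary} cannot recover it. With the low factors in $L^\infty_x$, the two high factors are best placed in $L^2_{T,x}$, and the $L^4$ bound is worse, losing $N^{\kappa}$ per factor. Even the better placement closes only for $s\ge 2\beta+\frac12$. For example, $\alpha=2$, $\beta=\frac12$ gives $\gamma=\frac34$ but would require $s\ge\frac32$.

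Moreover, the condition $s\ge\beta+1-\frac{\alpha}{2}$ that you invoke is not a hypothesis of this proposition; it belongs to Proposition \ref{DiffEstimate}. For $\alpha=1.2$, $\beta=0.2$ the proposition covers $s=\frac12+$, whereas $\beta+1-\frac{\alpha}{2}=0.6$.

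\textbf{The missing step} is the paper's Case 3: combine the commutator cost $N^{2s+2\beta-1}M$ with the weak-resonance bound $|\Omega|\gtrsim N^{\alpha-1}M$. The factor $M$ cancels, and one again obtains $N^{4\beta-\alpha}$.

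\textbf{Choice of $R$.} For this to work, $R$ must sit just below $L$ ($R=N^{\alpha-1-}M^{1+}$, respectively $R=N^{\alpha-}$), not at a small power of $N_{\max}$. The $\indh$ pieces have no modulation available and must gain roughly $L^{-1}$ from \eqref{indL1Bound} alone.

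\textbf{The case you flag as the main obstacle.} The all-comparable case needs neither resonance nor refined counting. Four applications of Corollary \ref{StrichartzSecondary}, followed by Cauchy--Schwarz in $\ell^4$, close exactly at $s=\gamma$, as you already observed.
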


\begin{proof}
For the proof, use the equivalent dyadic energy
\[
\mathcal E^s_\omega(u)
:=
|\widehat u(0)|^2+
\sum_{N\ge1}\omega_N^2\|D_x^sP_Nu\|_{L^2}^2
\sim \|u\|_{H^s_\omega}^2.
\]
For any solution to \eqref{MMTEq} and every dyadic $N\ge1$, one has
\begin{equation}
\begin{split}  
   \frac{d}{dt}\left(\omega_N^2\|D_x^sP_Nu(t)\|_{L^2}^2\right) &  =  2\nu \omega_N^2\Im \Big[\int_\T D_x^{2\beta + 2s} P_N^2 \bar{u}  \cdot |u|^2 u \, dx \Big],
\end{split}
\end{equation}
which implies that
\begin{equation}
\begin{split} \label{IntegralOvert}
\omega_N^2\left(\|D_x^sP_Nu(T)\|_{L^2}^2-\|D_x^sP_Nu_0\|_{L^2}^2\right) = 2\nu \omega_N^2\Im \Big[\int_0^T \int_\T  D_x^{2\beta + 2s} P_N^2\bar{u} \cdot |u|^2u\, dx dt \Big] .
\end{split}
\end{equation}
Due to the symmetry, we can rewrite the integral term in \eqref{IntegralOvert} as follows,
\begin{equation}
\begin{split} \label{PMCommutator}
2\Im & \Big[ \int_\T D_x^{2\beta+2s} P_N^2 \bar{u} \cdot |u|^2u \, dx\Big] \\
&= \frac{1}{2} \Im\Big[\sum_{k_1 -k_2 +k_3 - k_4 = 0} A(k_1,k_2,k_3,k_4) \bar{\hat{u}}(k_4) \hat{u}(k_3) \bar{\hat{u}}(k_2) \hat{u}(k_1)  \Big],
\end{split}
\end{equation}
where
\begin{equation} \label{symbolsApriori}
A(k_1,k_2,k_3,k_4):=m(k_4)-m(k_3)+m(k_2)-m(k_1),
\end{equation}
with $m(k) = \phi_N(k)^2|k|^{2\beta+2s}$.  
In what follows, 
we estimate the right hand side of \eqref{IntegralOvert}.
For $N \les 1$, we see that
\[
\bigg|  \int_0^T \int_\T D_x^{2\beta+2s} P_N^2 \bar{u} \cdot |u|^2u \, dx dt \bigg| \les T \|u\|_{L^\infty([0,T]\times \T)}^4 \les T \|u\|_{L^\infty_{T}([0,T]; H^{\frac12+}(\T))}^4,
\]
which is sufficient for our purpose.
Hence, we may assume that $N \gg 1$ in the following.

We begin by dyadically decomposing each term in \eqref{PMCommutator} into frequency scales $N_1,N_2,N_3,N_4$. 
For brevity, denote the remaining $N \gg 1$ part of \eqref{IntegralOvert} by ${\rm J}_T$. 
We then obtain
\begin{align}
\label{JT}
\begin{split}
{\rm J}_T
&= \sum_{N\gg1}\omega_N^2
\left(\|D_x^sP_Nu(T)\|_{L^2}^2-\|D_x^sP_Nu_0\|_{L^2}^2\right) \\
&= 2\nu \sum_{N \gg 1} \omega_N^2\Im \bigg[
\int_0^T \int_\T 
D_x^{2\beta + 2s} P_N^2 \bar{u} \cdot |u|^2 u 
\, dx\, dt
\bigg] \\
&= \frac{\nu}{2}
\sum_{N \gg 1}\omega_N^2
\Im \bigg[
\int_0^T 
\sum_{k_1 - k_2 + k_3 - k_4 = 0}
A(k_1,k_2,k_3,k_4)\,
\overline{\hat{u}}(k_4)
\hat{u}(k_3)
\overline{\hat{u}}(k_2)
\hat{u}(k_1)
\, dt
\bigg] \\
&\leq \frac{1}{2}
\sum_{\substack{N \gg 1 \\ N_1,N_2,N_3,N_4}}\omega_N^2
\bigg|
\int_0^T
\sum_{k_1 - k_2 + k_3 - k_4 = 0}
A(k_1,k_2,k_3,k_4)\\
& \hphantom{XXXXXXXXXX} \times
\overline{\widehat{P_{N_4} u}}(k_4)
\widehat{P_{N_3} u}(k_3)
\overline{\widehat{P_{N_2} u}}(k_2)
\widehat{P_{N_1} u}(k_1)
\, dt
\bigg|.
\end{split}
\end{align}

By symmetry, to estimate \eqref{JT} we can assume the following reductions:
$N_1 \geq N_2,N_3$ and $N_2 \geq N_4$. Furthermore, if $k_1 = k_2$ or
$k_2 = k_3$, then the summand is real and does not contribute to the
imaginary part. We may therefore assume that the resonance function in
Lemma \ref{ResonanceRelation} is nonzero.
Since $k_1 -k_2 + k_3 - k_4 = 0$, the two largest frequencies must be comparable, that is $N_1 \sim N_2$ or $N_1 \sim N_3$. We then distinguish cases based on frequency interaction which give rise to different lower bounds on the resonance relation.
\begin{itemize}
    \item {\it Case 1}: $N_1 \sim N_2 \sim N_3 \sim N_4$
    \item {\it Case 2}: $N_1 \sim N_3 \gg  N_4$
    \item {\it Case 3}: $N_1 \sim N_2 \gg  N_4$
\end{itemize}

Let ${\rm J}_T^{(i)}$ denote ${\rm J}_T$ defined in \eqref{JT} restricted to the $i$-th case. Let us denote by $N_{(1)} \geq N_{(2)} \geq N_{(3)} \geq N_{(4)}$ an order on $N_1,N_2,N_3,N_4$. By the reductions from before $N_1 \sim N_{(1)}$. Now, we move to estimate each case in detail.\\ 

\noindent
{\it \large Case 1: $N_1 \sim N_2 \sim N_3 \sim N_4$}.
In this case, by H\"older and Corollary \ref{StrichartzSecondary}.
\begin{equation}
\begin{split}
{\rm J}_T^{(1)} &\les \sum_{N_{1}\sim N_2 \sim N_3 \sim N_4} \omega_{N_1}^2 N_1^{2s + 2\beta}\|P_{N_1}u\|_{L^4_{Tx}}\|P_{N_2}u\|_{L^4_{Tx}}\|P_{N_3}u\|_{L^4_{Tx}}\|P_{N_4}u\|_{L^4_{Tx}} \\
& \les \Big(\sum_N \omega_N^4\|D_x^{s - \kappa(\alpha,\beta)}P_N u\|_{L^4_{Tx}}^4\Big)^{1/2} \times \Big(\sum_N \|D_x^{\beta + \kappa(\alpha,\beta)}P_N u\|_{L^4_{Tx}}^4\Big)^{1/2} \\
& \les T^{1/2}\|u\|_{L^\infty_T H^s_\omega}^2 \|u\|_{L^\infty_T H^{\gamma(\alpha,\beta)}}^2\big(1 + T\|u\|_{L^\infty_T H^{s}}^2\big)^4,
\end{split}
\end{equation}
since $s \ge \max (\gamma(\alpha,\beta),\, \tfrac12+)$,
which is acceptable.\\

\noindent
{\it \large Case 2: $N_1 \sim N_3 \gg  N_4$}.
In this case, by Lemma \ref{ResonanceRelation} the resonance relation satisfies $|\Omega| \gtrsim N_{(1)}^\alpha =: L$.

Observe that we may bound $J_T^{(2)}$ simply by
\begin{equation}
\begin{split} \label{J2}
{\rm J}_T^{(2)}& \lesssim  \sum_{
N_1 \sim N_3 \gg  N_4} \omega_{N_4}^2\left|\int_0^T \int_\T D_x^{2\beta + 2s}P_N^2 P_{N_4}\bar{u} P_{N_3}u P_{N_2}\bar{u} P_{N_1}u \, dx dt'\right| \\ 
&\quad +\sum_{ 
N_1 \sim N_3 \gg   N_4} \omega_{N_3}^2 \left|\int_0^T \int_\T P_{N_4}\bar{u}  D_x^{2\beta + 2s}P_N^2  P_{N_3}u P_{N_2}\bar{u} P_{N_1}u \, dx dt'\right| \\ 
&\quad+\sum_{
N_1 \sim N_3 \gg    N_4} \omega_{N_2}^2 \left|\int_0^T \int_\T P_{N_4}\bar{u}    P_{N_3}u D_x^{2\beta + 2s}P_N^2P_{N_2}\bar{u} P_{N_1}u \, dx dt'\right| \\ 
&\quad+\sum_{
N_1 \sim N_3 \gg   N_4}  \omega_{N_1}^2\left|\int_0^T \int_\T P_{N_4}\bar{u}    P_{N_3}u P_{N_2}\bar{u} D_x^{2\beta + 2s}P_N^2P_{N_1}u \, dx dt'\right|.
\end{split}
\end{equation}
Since $s,\beta\ge0$, it suffices to estimate the last term, where $D_x^{2\beta + 2s}$ hits on the highest frequency, as the others are treated similarly. We argue by showing the multilinear estimate
\begin{equation} \label{multline1}
\begin{split}
    \omega_{N_{(1)}}^2\bigg| \int_0^T \int_{\T} P_{N_1}v_1 P_{N_2} \bar{v}_2 P_{N_3} v_3 P_{N_4} \bar{v}_4 \, dx dt \bigg| \\
    & \hspace{-6cm}\les T^\theta \omega_{N_{(1){}}}\omega_{N_{(2)}} N_{(1)}^{-\alpha}(N_{(3)}N_{(4)})^\frac{1}{2} \sum_{i=1}^4 \left(N_{(1)}^{0+}\|P_{N_i} v_i \|_{L^\infty_T L^2_x} + \|P_{N_i} v_i \|_{X^{0,1}_T} \right) \prod_{\substack{j = 1 \\ j \neq i}}^4 \|P_{N_j} v_j\|_{L^\infty_T L^2_x},
\end{split}
\end{equation}
where $\theta$ depends only on $\alpha$.
Assuming \eqref{multline1}, then
\begin{equation}
\begin{split}
{\rm J}_T^{(2)} &\les T^\theta \sum_{N_1 \sim N_3 \gg N_4} \omega_{N_1}\omega_{N_2}N_{(1)}^{2(\beta + s) - \alpha}N_2^\frac12 N_4^\frac12 \\
& \qquad \qquad \times \sum_{i=1}^4 \left( N_{(1)}^{0+} N_{(i)}^{-s}\|P_{N_{(i)}} u\|_{L^\infty_T H^s} + N_{(i)}^{2\beta-s}\|P_{N_{(i)}} u \|_{X^{s-2\beta,1}_T}\right)\prod_{\substack{j = 1 \\ j\neq i}}^4 N_{(j)}^{-s}\|P_{N_{(j)}} u \|_{L^\infty_T H^s}.
\end{split}
\end{equation}
Collecting coefficients, the two dyadic factors that must be summed are
\begin{equation} \label{boundConditions}
N_{(1)}^{2\beta-\alpha+}N_2^{\frac12-s}N_4^{\frac12-s},
\qquad
N_{(1)}^{2\beta-\alpha}N_2^{\frac12-s}N_4^{\frac12-s}
N_{(i)}^{2\beta}.
\end{equation}
The sums in $N_2$ and $N_4$ are bounded because $s>1/2$. The first factor
is summable since $2\beta<\alpha$. In the second factor the worst case is
$N_{(i)}\sim N_{(1)}$, which gives $N_{(1)}^{4\beta-\alpha}$ and is bounded
under $4\beta\le\alpha$.

It remains to show \eqref{multline1}. 
We pass to extensions to the real line $\rho_T(v)$ as in \eqref{extension}, which by a slight abuse of notation we still denote an extension of $v$ by itself. Introducing indicator functions to ensure the time domain of the integral remains consistent, we can denote the LHS of \eqref{multline1} by
\begin{equation}
\begin{split}
&\omega_{N_{(1)}}^2 \bigg|\int_0^T \int_\T P_{N_1} \ind_T v_1  P_{N_2} \bar v_2 P_{N_3} \ind_T v_3 P_{N_4} \bar v_4 \, dx dt \bigg| \\
& \qquad \qquad =: {\rm I}^{(2)}(P_{N_1}\ind_T v_1 , P_{N_2} v_2, P_{N_3}\ind_T v_3, P_{N_4}v_4 )
\end{split}
\end{equation}
We decompose ${\rm I}^{(2)}$ using \eqref{indDecomp} according to whether the time cutoff lies in the high- or low-frequency region,
\begin{equation}
\begin{split}
{\rm I}^{(2)} & (P_{N_1}\ind_T v_1, P_{N_2}v_2, P_{N_3}\ind_T v_3, P_{N_4} v_4  ) \\
& \quad \leq {\rm I}^{(2)}(P_{N_1}\ind_{T,R}^\text{high} v_1 , P_{N_2}v_2, P_{N_3}\ind_T v_3, P_{N_4}v_4  ) \\
&\quad + {\rm I}^{(2)}(P_{N_1}\ind_{T,R}^\text{low} v_1, P_{N_2}v_2, P_{N_3}\ind_{T,R}^\text{high} v_3, P_{N_4}v_4  ) \\
&\quad + {\rm I}^{(2)}(P_{N_1}\ind_{T,R}^\text{low}v_1, P_{N_2}v_2, P_{N_3}\ind_{T,R}^\text{low} v_3, P_{N_4}v_4  ) \\
& =: {\rm I}_1^{(2)} + {\rm I}_2^{(2)} + {\rm I}_3^{(2)}.
\end{split}    
\end{equation}
With the choice of $R = N_{(1)}^{\alpha -}$ and using \eqref{indL1Bound} one has,
\begin{equation}
\begin{split}
{\rm I}_1^{(2)} &\les \omega_{N_1}^2 \|\indh\|_{L^1_t}\|P_{N_1}v_1\|_{L^\infty_t L^2_x}\|P_{N_3}v_3\|_{L^\infty_t L^2_x}\|P_{N_2}v_2\|_{L^\infty_{tx}}\|P_{N_4}v_4\|_{L^\infty_{tx}} \\
& \les \bigg(\frac{\omega_{N_1}}{\omega_{N_3}}\bigg) \omega_{N_1}\omega_{N_3} T^\theta N_1^{-\alpha +}N_2^\frac12 N_4^\frac12 \|P_{N_1}v_1\|_{L^\infty_t L^2_x}\|P_{N_3}v_3\|_{L^\infty_t L^2_x}\|P_{N_2}v_2\|_{L^\infty_{t}L^2_x}\|P_{N_4}v_4\|_{L^\infty_{t}L^2_x},
\end{split}
\end{equation}
which is sufficient for our purposes since $\omega_{N_1}/\omega_{N_3} \les 1$. The same argument provides the same bound for ${\rm I}_2^{(2)}$.
\begin{remark} \label{freqEnvolopeRatioBound}
Since the frequency envelope satisfies $\omega_N \leq \omega_{2N} \leq \delta \omega_{N}$, given there exists $C > 0$ such that $N_1 \leq CN_3$, then 
\begin{equation}
\frac{\omega_{N_1}}{\omega_{N_3}} \les \delta^{\log_2(C)} \les C
\end{equation}
uniformly in $\delta$, since $\delta  \leq 2$. It is important that the bound is uniform in $\delta$ since later $\delta$ will depend on initial data. See Lemma 4.6 of \cite{MolinetTanakaUncond}.
\end{remark}
We now turn to the estimate of $I_3^{(2)}$. 
We further decompose according to modulation as follows:
\begin{equation}
\begin{split}
{\rm I}_3^{(2)}
&\le
{\rm I}^{(2)}(P_{N_1}Q_{\gtrsim L}(\indl v_1), P_{N_2}v_2, P_{N_3}\indl v_3, P_{N_4}v_4) \\
&\quad+
{\rm I}^{(2)}(P_{N_1}Q_{\ll L}(\indl v_1), P_{N_2}Q_{\gtrsim L}v_2, P_{N_3}\indl v_3, P_{N_4}v_4) \\
&\quad+
{\rm I}^{(2)}(P_{N_1}Q_{\ll L}(\indl v_1), P_{N_2}Q_{\ll L} v_2, P_{N_3}Q_{\gtrsim L}(\indl v_3), P_{N_4}v_4) \\
&\quad+
{\rm I}^{(2)}(P_{N_1}Q_{\ll L}(\indl v_1), P_{N_2}Q_{\ll L}v_2, P_{N_3}Q_{\ll L}(\indl v_3), P_{N_4}Q_{\gtrsim L}v_4) \\
&=: {\rm I}_{3,1}^{(2)} + {\rm I}_{3,2}^{(2)} + {\rm I}_{3,3}^{(2)} + {\rm I}_{3,4}^{(2)}.
\end{split}
\end{equation}
We estimate each term separately. Using \eqref{indLinftyBound}, \eqref{indL2Bound}, the fact that 
$L = N_{(1)}^{\alpha} \gg R = N_{(1)}^{\alpha -}$, Lemma~\ref{QLlow} and Remark \ref{freqEnvolopeRatioBound} we obtain
\begin{equation}
\begin{split}
{\rm I}_{3,1}^{(2)}
&\lesssim
\omega_{N_1}^2
\|\indl\|_{L^2_t}
\|P_{N_1}Q_{\gtrsim L}(\indl v_1)\|_{L^2_{tx}}
\|P_{N_3}v_3\|_{L^\infty_t L^2_x}
\|P_{N_2}v_2\|_{L^\infty_{tx}}
\|P_{N_4}v_4\|_{L^\infty_{tx}} \\
&\lesssim
\omega_{N_1}\omega_{N_3}T^{1/2}
N_{(1)}^{-\alpha}N_2^\frac12 N_4^\frac12
\|P_{N_1}v_1\|_{X^{0,1}}
\|P_{N_3}v_3\|_{L^\infty_t L^2_x} 
\|P_{N_2}v_2\|_{L^\infty_t L^2_x}
\|P_{N_4}v_4\|_{L^\infty_t L^2_x}. 
\end{split}
\end{equation}
The same argument gives a similar bound for ${\rm I}^{(2)}_{3,3}$. Furthermore, by similar reasoning
\begin{equation}
\begin{split}
{\rm I}^{(2)}_{3,2} & \les \omega_{N_1}^2 \|P_{N_1}\indl v_1\|_{L^2_{tx}}
\|P_{N_3}\indl v_3\|_{L^\infty_t L^2_x}
\|P_{N_2}Q_{\gtrsim L}v_2\|_{L^2_t L^\infty_x}
\|P_{N_4}v_4\|_{L^\infty_{tx}} \\
&  \les \omega_{N_1}\omega_{N_2} N_{(1)}^{-\alpha}N_2^\frac12 N_4^\frac12 \|\indl\|_{L^2_t} \|P_{N_1}v_1\|_{L^\infty_t L^2_x} \|P_{N_3} v_3 \|_{L^\infty_t L^2_x} \|P_{N_2}v_2 \|_{X^{0,1}} \|P_{N_4} v_4 \|_{L^\infty_t L^2_x},
\end{split}
\end{equation}
with a similar argument for ${\rm I}^{(2)}_{3,4}$. This completes the proof for \eqref{multline1} and {\it Case 2}. \\

\noindent
{\it \large Case 3: $N_1 \sim N_2 \gg N_4$}.
In this case, we split the symbol $A$ in \eqref{JT} as
\[
A(k_1,k_2,k_3,k_4) = B (k_1,k_2) + B (k_3,k_4),
\]
where 
$$
B (k_1,k_2) = \phi_N(k_2)^2|k_2|^{2\beta+2s}
- \phi_N(k_1)^2|k_1|^{2\beta+2s}.
$$
We can assume that $|k_1 - k_2| = |k_3 - k_4| \geq 1$ as otherwise $A \equiv 0$.
Let ${\rm J}_T^{(3,1)}$ be the restriction of ${\rm J}_T^{(3)}$ to the symbol $B(k_1,k_2)$.
\begin{align}
\label{3bound}
\begin{split}
{\rm J}_T^{(3,1)} & = \frac{1}{2} \sum_{\substack{N \sim N_1\sim N_2\gg N_4}} \omega_{N_1}^2 \bigg|  \int_0^T \sum_{k_1 - k_2 + k_3 - k_4 = 0} B(k_1,k_2) \\
& \hphantom{XXXXXXXX} \times \overline{\widehat{P_{N_4} u}}(k_4) \widehat{P_{N_3} u}(k_3) \overline{\widehat{ P_{N_2}u}}(k_2) \widehat{ P_{N_1} u}(k_1)  dt  \bigg|\\
& \les  \sum_{\substack{N \sim N_1\sim N_2\gg N_4}} \omega_{N_1}^2 \bigg|  \int_0^T \int_{\T} P_{N_4} \bar u   P_{N_3} u  \cdot K_{B} (  P_{N_2} \bar u, P_{N_1} u )  dx dt  \bigg|,
\end{split}
\end{align}
where $K_{B}$ is the multiplier with symbol $B$. We further note that $K_B$ gives rise to the commutator present in Lemma \ref{commutator} hence for $1/p + 1/q + 1/r = 1$, $1 \leq p,q,r \leq \infty$,
\begin{equation} \label{Bcommutator}
\bigg|\int_{\T}f g \cdot K_B(u,v) \, dx \bigg|\leq \|[D_x^{2\beta + 2s}P_N, fg](u) \cdot v \|_{L^1_x} \les N^{2\beta + 2s - 1}\|D_x(fg)\|_{L^p_{x}} \|u\|_{L^q_x}\|v\|_{L^r_x}.
\end{equation}
Hence we can push derivatives onto the product of smaller frequencies; this will be the core technique used in the argument of this case.
To proceed, we decompose the frequency of the product $P_{N_4} \bar u  P_{N_3} u $ and write
\begin{align}
{\rm J}_T^{(3,1)} \les \sum_{\substack{N \sim N_1\sim N_2\gg N_4 \\ M \les N_{(3)}}} \omega_{N_1}^2  \bigg| \int_0^T \int_\T P_M( P_{N_4}\bar u P_{N_3} u) \cdot K_{B} ( P_{N_2} \bar u ,P_{N_1} u) \,dx dt \bigg|.
\end{align}
We shall argue by showing the multilinear estimate
\begin{equation} \label{multline2}
\begin{split}
   & \omega_{N_{(1)}}^2 \bigg|\int_0^T \int_\T P_M(P_{N_{4}}\bar v_4 P_{N_3}v_3 ) \cdot K_B(P_{N_2} \bar v_2, P_{N_1} v_1 ) \, dx dt \bigg| \\ 
   & \qquad  \les T^\theta \omega_{N_{(1)}} \omega_{N_{(2)}} N_{(1)}^{2\beta +2s -\alpha}N_{(3)}^{\frac12}N_{(4)}^\frac12 \sum_{i=1}^4 \left(N_{(1)}^{0+}\|P_{N_i}v_i \|_{L^\infty_T L^2_x} + \|P_{N_i}v_i \|_{X^{0,1}_T} \right) \prod_{\substack{j = 1 \\ j \neq i}}^4 \|P_{N_j}v_j \|_{L^\infty_T L^2_x},
\end{split}
\end{equation}
where $\theta$ depends only on $\alpha$.
Assuming \eqref{multline2}, then
\begin{equation}
\begin{split}
{\rm J}_T^{(3,1)} &\les T^\theta\sum_{N_1 \sim N_2 \gg N_3, N_4}  \omega_{N_{(1)}}\omega_{N_{(2)}}N_{(1)}^{2\beta +2s -\alpha}N_{(3)}^{\frac12 +}N_{(4)}^\frac12 \\
& \qquad \times \sum_{i=1}^4 \left( N_{(1)}^{0+}N_{(i)}^{-s} \|P_{N_i}u \|_{L^\infty_T H^s} + N_{(i)}^{2\beta -s}\|P_{N_i}u \|_{X^{s-2\beta,1}_T} \right) \prod_{\substack{j = 1 \\ j \neq i}}^4 N_{(j)}^{-s}\|P_{N_j}u \|_{L^\infty_T H^s},
\end{split}
\end{equation}
where the sum in $M$ costs at most $N_{(3)}^{0+}$. The remaining dyadic
factors are
\[
N_{(1)}^{2\beta-\alpha+}N_{(3)}^{\frac12-s+}
N_{(4)}^{\frac12-s},
\qquad
N_{(1)}^{2\beta-\alpha}N_{(3)}^{\frac12-s+}
N_{(4)}^{\frac12-s}N_{(i)}^{2\beta}.
\]
Choosing the harmless plus exponents smaller than $s-\frac12$, the lower
frequency sums converge. The top frequency is controlled exactly as in
\eqref{boundConditions}, using $4\beta\le\alpha$.

We now prove \eqref{multline2}. By Lemma \ref{ResonanceRelation}, under these frequency projections on the LHS the resonance relation satisfies $|\Omega| \gtrsim N_1^{\alpha-1} M =: L$. Considering the LHS, where we again replace each $v_i$ by its extension to the real line from \eqref{extension}, we write 
\begin{equation}
\begin{split}
    &\omega_{N_{(1)}}^2 \bigg|\int_\R \int_\T P_M(P_{N_{4}}\bar v_4 P_{N_3}v_3 ) \cdot K_B(P_{N_2}\ind_T \bar v_2, P_{N_1} \ind_T v_1 ) \, dx dt \bigg| \\
    & \qquad \qquad= : {\rm I}^{(3,1)}_M(P_{N_1}\ind_T v_1, P_{N_2}\ind_T v_2, P_{N_3}v_3, P_{N_4}v_4 ).
\end{split}
\end{equation}
We proceed as in {\it Case 2} by decomposing the indicator functions $\ind_T$ as follows,
\begin{equation}
\begin{split}
{\rm I}^{(3,1)}_M & (P_{N_1}\ind_T v_1, P_{N_2}\ind_T v_2, P_{N_3}v_3, P_{N_4}v_4 ) \\
&\quad \leq {\rm I}^{(3,1)}_M (P_{N_1}\indh v_1, P_{N_2}\ind_T v_2, P_{N_3}v_3, P_{N_4}v_4 )  \\
&\quad +{\rm I}^{(3,1)}_M (P_{N_1}\indl v_1, P_{N_2}\indh v_2, P_{N_3}v_3, P_{N_4}v_4 )  \\
& \quad +{\rm I}^{(3,1)}_M (P_{N_1}\indl v_1, P_{N_2}\indl v_2, P_{N_3}v_3, P_{N_4}v_4 )\\
& =: {\rm I}_1^{(3,1)} + {\rm I}_2^{(3,1)} + {\rm I}_3^{(3,1)}.
\end{split}
\end{equation}
Choosing $R =   N_{(1)}^{\alpha-1-}M^{1+}$, using \eqref{indL1Bound} and \eqref{Bcommutator}
\begin{equation}
\begin{split}
{\rm I}_1^{(3,1)} &\les   \omega_{N_1}^2 N_{(1)}^{2\beta + 2s - 1} M  \|\ind_{T,R}^{\text{high}}\|_{L^1_t}\|P_{N_1}v_1\|_{L^\infty_t L^2_x}\|P_{N_2}v_2\|_{L^\infty_t L^2_x}  \|P_{M}(P_{N_3}v_3 P_{N_4}\bar v_4)\|_{L^\infty_{tx}} \\
& \les T^{\theta} \omega_{N_{(1)}}\omega_{N_{(2)}} N_{(1)}^{2\beta +2s -\alpha +} N_3^{\frac12} N_4^{\frac12}\|P_{N_1}v_1\|_{L^\infty_t L^2_x}\|P_{N_2}v_2\|_{L^\infty_t L^2_x}\|P_{N_3}v_3\|_{L^\infty_t L^2_x}\|P_{N_4}v_4\|_{L^\infty_t L^2_x},
\end{split}
\end{equation}
which is sufficient. A similar argument gives the same bound for ${\rm I}_2^{(3,1)}$. Turning to the term ${\rm I}_3^{(3,1)}$; we further decompose this term as before
\begin{equation}
\begin{split}
{\rm I}_3^{(3,1)} & \leq {\rm I}^{(3,1)}(P_{N_1}Q_{\gtrsim L}(\indl v_1),  P_{N_2}\indl v_2, P_{N_3}v_3, P_{N_4}v_4 )  \\
&\quad + {\rm I}^{(3,1)}(P_{N_1}Q_{\ll L}(\indl v_1),  P_{N_2}Q_{\gtrsim L}(\indl v_2), P_{N_3}v_3, P_{N_4}v_4 )\\
& \quad+ {\rm I}^{(3,1)}(P_{N_1}Q_{\ll L}(\indl v_1),  P_{N_2}Q_{\ll L}(\indl v_2), P_{N_3}Q_{\gtrsim L}v_3, P_{N_4}v_4 )\\
& \quad+ {\rm I}^{(3,1)}(P_{N_1}Q_{\ll L}(\indl v_1),  P_{N_2}Q_{\ll L}(\indl v_2), P_{N_3}Q_{\ll L} v_3, P_{N_4}Q_{\gtrsim L}v_4 )\\
& =: {\rm I}_{3,1}^{(3,1)} +{\rm I}_{3,2}^{(3,1)}+{\rm I}_{3,3}^{(3,1)}+{\rm I}_{3,4}^{(3,1)}.
\end{split}
\end{equation}
By \eqref{indLinftyBound}, \eqref{indL2Bound}, $L = N_1^{\alpha-1}M \gg R$, Lemma \ref{QLlow}, \eqref{Bcommutator} and Remark \ref{freqEnvolopeRatioBound}
\begin{equation}
\begin{split}
{\rm I}_{3,1}^{(3,1)} &\les \omega_{N_{(1)}}^2 N_{(1)}^{2s + 2\beta - 1} M \|P_{N_1}Q_{\gtrsim L}(\indl v_1)\|_{L^2_{tx}}\|P_{N_2}\indl v_2\|_{L^2_{tx}} \|P_{M}(P_{N_3}v_3 P_{N_4}\bar v_4)\|_{L^{\infty}_{tx}} \\
& \les \omega_{N_{(1)}}\omega_{N_{(2)}}N_{(1)}^{2s + 2\beta - \alpha} \|\indl\|_{L^2_t}\|P_{N_1}v_1\|_{X^{0,1}}\|P_{N_2}v_2\|_{L^\infty_t L^2_{x}}\|P_{M}(P_{N_3}v_3 P_{N_4}\bar v_4)\|_{L^{\infty}_{tx}} \\
& \les T^{\frac12} \omega_{N_{(1)}}\omega_{N_{(2)}} N_{(1)}^{2\beta +2s - \alpha} N_3^{\frac12} N_4^{\frac12} \|P_{N_1}v_1\|_{X^{0,1}}\|P_{N_2}v_2\|_{L^\infty_t L^2_x}\|P_{N_3}v_3\|_{L^\infty_t L^2_x} \|P_{N_4}v_4\|_{L^\infty_t L^2_x},
\end{split}
\end{equation}
which is sufficient. The same argument works for ${\rm I}_{3,2}^{(3,1)}$.

To simplify the next estimates observe by Cauchy-Schwarz
\begin{equation}
\begin{split} \label{JointModulationCS}
\|P_M(P_{N_3}Q_{\gtrsim L}v_3 P_{N_4}\bar v_4 )\|_{L^2_t L^\infty_x} & \les \sum_{K \gtrsim MN_{(1)}^{\alpha-1}}  \|P_M( P_{N_3}Q_{K}v_3 P_{N_4}\bar v_4)\|_{L^2_t L^\infty_x} \\
& \les \sum_{K \gtrsim MN_{(1)}^{\alpha-1}}  \|  P_{N_3}Q_{K}v_3 \|_{L^2_t L^\infty_x} \| P_{N_4}\bar v_4\|_{L^\infty_t L^\infty_x} \\
& \les N_{(1)}^{1-\alpha}M^{-1}N_3^\frac12 N_4^\frac12\|P_{N_3}v_3\|_{X^{0,1}}\|P_{N_4}v_4\|_{L^\infty_t L^2_x},
\end{split}
\end{equation}
Hence by \eqref{indLinftyBound}, \eqref{indL2Bound}, $L \gg R$, Lemma \ref{QLlow}, \eqref{Bcommutator}, Lemma \ref{QLBounded} and \eqref{JointModulationCS} we see
\begin{equation}
\begin{split}
{\rm I}_{3,3}^{(3,1)} 
& \les \omega_{N_{(1)}}^2 N_{(1)}^{2s+2\beta -1} M \|\indl\|_{L^2_t} \|P_{N_1}v_1\|_{L^\infty_t L^2_x} \|P_{N_2}v_2\|_{L^\infty_t L^2_x}  \|P_M(P_{N_3}Q_{\gtrsim L}v_3 P_{N_4}\bar v_4 )\|_{L^2_t L^\infty_x} \\
& \les T^{1/2} \omega_{N_{(1)}}\omega_{N_{(2)}} N_{(1)}^{2\beta +2s -\alpha}N_3^\frac12 N_4^\frac12 \|P_{N_1}v_1\|_{L^\infty_t L^2_x}\|P_{N_2}v_2\|_{L^\infty_t L^2_x}\|P_{N_3}v_3\|_{X^{0,1}}\|P_{N_4}v_4\|_{L^\infty_t L^2_x},
\end{split}
\end{equation}
where the same argument works for ${\rm I}^{(3,1)}_{3,4}$. This completes the proof of \eqref{multline2}.
The contribution of the $B(k_3,k_4)$ multiplier follows from the same
estimates after the permutation
$(k_1,k_2,k_3,k_4)\mapsto(k_3,k_4,k_1,k_2)$ and complex conjugation.  Under
this permutation, the commutator bound produces
\[\|D_xP_M(P_{N_1}v_1 P_{N_2} \bar v_2 )\|_{L^1_x} \]
in place of the corresponding $(N_3,N_4)$ term.  Thus the dyadic estimates
and summations already proved apply without change.
This completes the proof.
\end{proof}

\section{Estimate on differences}\label{sec-6}
This section is devoted to proving the estimate on differences for the solutions to \eqref{MMTEq} with regularity $s-1$.
In particular, we show the following result.

\begin{prop} \label{DiffEstimate}
Let $0 < T \leq 1, 1 < \alpha \leq 2, 0 < \beta \leq \alpha/4$ and 
\[s \ge \max( \gamma(\alpha,\beta), \beta + 1 -\frac\alpha2, 1/2+) .\] 
Let $u$ and $v$ be two solutions of \eqref{MMTEq} with initial data $u_0$ and $v_0 \in H^s(\T)$ respectively. Then there exists $\theta > 0 $ such that for $w = u - v$ 
\begin{equation}
\|w\|_{L^\infty_T H^{s-1}}^2 \les \|u_0 - v_0\|_{H^{s-1}}^2 + T^{\theta} \|w\|_{L^\infty_T H^{s-1}}\|w\|_{Z^{s-1}_T}(1+\|u\|_{Z^s_T}+\|v\|_{Z^s_T})^4,
\end{equation}
where $\theta$ and the implicit constants depend on $\alpha,\beta$, and $s$.
\end{prop}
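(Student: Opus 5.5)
We follow the structure of the proof of Proposition \ref{Apriori}, now applied to the difference equation. Writing $w=u-v$, the difference satisfies
\[
i\partial_t w+(-\partial_x^2)^{\frac\alpha2}w=\nu D_x^{2\beta}\big(|u|^2u-|v|^2v\big),
\]
and we expand $|u|^2u-|v|^2v$ as a sum of trilinear terms, each containing exactly one factor $w$ or $\bar w$, with the other two factors drawn from $u,v$ and their conjugates (for example $w|u|^2+v\bar w u+|v|^2w$). For each dyadic $N\ge1$ we compute
\[
\frac{d}{dt}\|D_x^{s-1}P_Nw\|_{L^2}^2 = 2\nu\Im\int_\T D_x^{2\beta+2s-2}P_N^2\bar w\,\big(|u|^2u-|v|^2v\big)\,dx.
\]
The case $N\lesssim1$ is handled crudely by $T\|w\|_{L^\infty_TH^{s-1}}(\|u\|_{L^\infty_T H^s}+\|v\|_{L^\infty_T H^s})^2\|w\|_{L^\infty_TH^{s-1}}$ via \eqref{SobolevMultiplication}. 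For $N\gg1$ we pass to Fourier variables with frequencies $k_1,k_2,k_3$ for the nonlinearity and $k_4$ for $\bar w$. Here the symmetrization used in \eqref{PMCommutator} is only partially available, since only two of the four slots carry $w$. We therefore symmetrize in the pair of slots containing $w$, or rewrite the term $w\cdot(\text{stuff})$ so that $\bar w$ and $w$ both appear, obtaining a commutator symbol $B$ of the type in Case 3, plus a remainder in which the symbol is not cancelled.

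We then split dyadically as before, into Cases 1 to 3 and their permutations, now keeping track of which slot carries $w$ at regularity $s-1$.

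In Case 1 (all frequencies comparable) we use H\"older together with Corollary \ref{StrichartzSecondary} for the $u,v$ factors and Corollary \ref{Diffell4L4Bound} for the $w$ factors. The power count is $N^{2s-2+2\beta}$, distributed as $N^{s-1-\kappa}$ on each of the two $w$ factors and $N^{\beta+\kappa}\le N^{s}$ on each of the $u,v$ factors. This works when $s\ge\gamma$.

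In Case 2 (high–high–high–low with nonzero resonance) and in Case 3 we reuse the multilinear estimates \eqref{multline1} and \eqref{multline2} verbatim, since these are stated for arbitrary functions $v_i$. We then insert $\|P_{N}w\|_{L^\infty_TL^2}\le N^{1-s}\|w\|_{L^\infty_TH^{s-1}}$ and $\|P_Nw\|_{X^{0,1}_T}\le N^{1-s+2\beta}\|w\|_{Z^{s-1}_T}$. Since Lemma \ref{BourgainEstimates} supplies the $Z$ norms of $u$ and $v$, the resulting factor has the form $\|w\|_{L^\infty H^{s-1}}\|w\|_{Z^{s-1}}(1+\|u\|_{Z^s}+\|v\|_{Z^s})^4$ after summation.

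The main obstacle is the configuration where $w$ sits at a low frequency while $u$ or $v$ occupies the high pair. A typical instance is $\bar w$ at $N_4$ (or $w$ at $N_3$) paired against high $u$ factors, which happens in the non-symmetrized part. There the commutator cannot be used, and the full weight $N_{(1)}^{2s-2+2\beta}$ falls on the high frequency. The high factor is $\bar w$ at regularity $s-1$ and the other high factor is $u$ at regularity $s$, so the count becomes
\[
N_{(1)}^{2s-2+2\beta-\alpha}\,N_{(1)}^{1-s}\,N_{(1)}^{-s}\,N_{low}^{\frac12}\cdot N_{low}^{1-s}\cdots .
\]
This gives the top-frequency power $N_{(1)}^{2\beta-1-\alpha}$, or $N_{(1)}^{4\beta-1-\alpha}$ when the $X^{0,1}$ norm falls on a high factor. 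That is fine. The low $w$ factor, however, loses $N_{low}^{1-s}$ relative to $H^s$. The resulting low-frequency sum $N_{low}^{\frac32-s}$ is controlled by trading powers of $N_{low}\le N_{(1)}$ into the top frequency. This requires roughly $2\beta-\alpha+\frac32-s\cdot(\ldots)<0$, and the resulting condition on $s$ should come out as $s\ge\beta+1-\frac\alpha2$; this is where that hypothesis enters. We would first try to obtain this using the resonance gain $N_{(1)}^{\alpha-1}M$ from Case 3 together with the $L^\infty_x$ Bernstein bound on the low factor. If that is insufficient, we would apply the Strichartz bound of Corollary \ref{Diffell4L4Bound} to the low $w$ factor. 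Finally, integrating in time and summing over $N$ yields the stated inequality.
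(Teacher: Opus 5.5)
The overall skeleton matches the paper's: a dyadic energy identity for $w$, a crude bound for $N\lesssim1$, Strichartz via Corollaries \ref{StrichartzSecondary} and \ref{Diffell4L4Bound} when all four frequencies are comparable, and \eqref{multline1} whenever $|\Omega|\gtrsim N_{(1)}^\alpha$. There is, however, a genuine gap: the configuration you flag as the main obstacle is never handled, and your count for it assumes a resonance gain that is not available.

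\textbf{The problematic configuration.} Take the weighted factor $\bar w(k_4)$ and an unconjugated $u(k_1)$ at the top frequency with $k_1\approx k_4$, and $k_2,k_3$ low. (In the $uv\bar w$ term, the unconjugated factor is $u$ or $v$.)
\begin{itemize}
\item Lemma \ref{ResonanceRelation} gives only $|\Omega|\gtrsim N_{(1)}^{\alpha-1}M$, where $M=|k_1-k_4|=|k_2-k_3|$. This $M$ can be $1$: take $k_1=K+1$, $k_4=K$, $k_2=n+1$, $k_3=n$ with $1\ll n\ll K$.
\item Your factor $N_{(1)}^{-\alpha}$ would need both differences in Lemma \ref{ResonanceRelation} to be of size $N_{(1)}$, which fails here.
\item No commutator is available. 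The top pair is not the $(\bar w,w)$ pair, and in $uv\bar w$ both copies of $w$ are conjugated, so symmetrizing produces no difference of symbols.
\end{itemize}

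\textbf{Why your fallbacks do not close.} Take $L=N_{(1)}^{\alpha-1}M$, Bernstein on each low factor, and the $X^{0,1}$ norm on a top factor. This gives
\[
N_{(1)}^{4\beta-\alpha}\,M^{-1}\,N_2^{\frac12-s}N_3^{\frac32-s}.
\]
For $N_2\sim N_3=N_{\mathrm{low}}$ and $M\sim1$ this is $N_{(1)}^{4\beta-\alpha}N_{\mathrm{low}}^{2-2s}$. Since $N_{\mathrm{low}}$ may be a fixed fraction of $N_{(1)}$, summing requires $s\ge 2\beta+1-\frac{\alpha}{2}$. That is strictly stronger than the hypotheses: for $\alpha=2$, $\beta=\frac25$ they allow $s\ge\frac35$, while this route needs $s\ge\frac45$. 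The Strichartz fallback is not specified. Corollary \ref{Diffell4L4Bound} gives $L^4_{tx}$ control, which does not combine with the $L^2_{tx}\times L^\infty_tL^2_x$ placement of the two top factors in the modulation argument.

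\textbf{The missing idea} is the estimate \eqref{Term2-2}. Localize the product of the two low factors at its output frequency $M$,
\[
\sum_{M\lesssim N_{(3)}}\int_\R\int_\T P_{N_1}v_1\,P_M\big(P_{N_2}\bar v_2\,P_{N_3}v_3\big)\,P_{N_4}\bar v_4\,dx\,dt,
\]
and bound
\[
\|P_M(fg)\|_{L^\infty_x}\lesssim M\|fg\|_{H^{-\frac12}}\lesssim M\,N_{(3)}^{-\frac12}N_{(4)}^{\frac12+}\|f\|_{L^2}\|g\|_{L^2}.
\]
The full factor $M$ then cancels the $M^{-1}$ coming from $L^{-1}$. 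It also cancels the $M^{-1}$ from $\|\indh\|_{L^1}\lesssim R^{-1}$ with $R=N_{(1)}^{\alpha-1-}M^{1+}$. What remains is $N_{(1)}^{1-\alpha}N_{(3)}^{-1/2}N_{(4)}^{1/2+}$, which sums under the stated hypotheses for both nonlinear terms.

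\textbf{A smaller omission.} You inherit the assumption $\Omega\neq0$ from Proposition \ref{Apriori}. Here, however, some exactly resonant terms are not real: $k_1=k_4$, $k_2=k_3$ in the $|u|^2w$ term, and $k_1=k_3$ or $k_1=k_4$ in the $uv\bar w$ term. They do not drop out of the imaginary part and get no modulation gain, so they must be bounded directly. This is easy: pairing $H^s$ against $H^{s-1}$ gives $N^{2\beta-1}\|u_1\|_{H^s}\|u_2\|_{H^s}\|w\|_{H^{s-1}}^2$ with $u_i\in\{u,v\}$, and $2\beta\le1$ finishes it.
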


\begin{proof}
Let $u$ and $v$ solve \eqref{MMTEq} with initial data $u_0$ and $v_0$,
respectively.
First, observe that the difference of two solutions $w = u - v$ satisfies
\begin{equation}
i\del_t w +(-\Delta)^{\alpha/2}w = \nu D_x^{2\beta}(|u|^2w + uv\bar{w} + |v|^2w).
\end{equation}
Following the same line of argument as we did at the beginning of the proof
of Proposition~\ref{Apriori}, for every dyadic $N\ge1$ we obtain the exact
dyadic energy identity, where
$\mathcal N(u,v,w):=|u|^2w+uv\bar w+|v|^2w$,
\begin{equation}
\begin{split}
N^{2(s-1)}\left(\|P_Nw(T)\|_{L^2}^2
-\|P_N(u_0-v_0)\|_{L^2}^2\right)
&=2\nu N^{2(s-1)}
\Im \bigg[ \int_0^T \int_\T
P_N^2D_x^{2\beta}\bar w\,\mathcal N(u,v,w)\,dxdt\bigg].
\end{split}
\end{equation}
By symmetry, it is sufficient to estimate terms of the form
\begin{equation} \label{DiffTerm1}
{\rm I} := N^{2(s-1)} \Im \bigg[ \int_0^T \int_\T P_N^2 D_x^{2\beta} \bar{w} \cdot|u|^2w \, dx dt \bigg]
\end{equation}
and
\begin{equation} \label{DiffTerm2}
{\rm I\!I} :=  N^{2(s-1)} \Im \bigg[ \int_0^T \int_\T P_N^2 D_x^{2\beta} \bar{w} \cdot uv \bar{w} \, dx dt \bigg].
\end{equation}
Before we estimate \eqref{DiffTerm1} and \eqref{DiffTerm2}, we first consider some special cases.

For $N \les 1$, we can easily bound \eqref{DiffTerm1} and \eqref{DiffTerm2} using Lemma \ref{SobolevMult} by
\begin{equation}
\begin{split}
T N^{2(s-1)}\|D_x^{2\beta}P_N \bar w w \|_{L^\infty_T H^{-\frac12}} \| |u|^2 \|_{L^\infty_T H^{\frac12}} &\les T N^{2(s+\beta-1)}\|P_{N}w\|_{L^\infty_T H^{\frac12+}}\|w\|_{L^\infty_T H^{-\frac12}}\|u\|_{L^\infty_T H^{\frac12+}}^2 \\
&\les T N^{2(s+\beta)-1+}\|w\|_{L^\infty_T H^{-\frac12}}^2\|u\|_{L^\infty_T H^{\frac12 + }}^2,
\end{split}
\end{equation}
which is sufficient provided $s > 1/2$. Hence we reduce to estimating for $N \gg 1$.

We now consider the resonant cases. Take \eqref{DiffTerm2} to begin, then
\[
\begin{split}
\int_\T P_N^2 D_x^{2\beta} \bar{w} \cdot u v \bar w \, dx
&=
\sum_{k_1 + k_2 - k_3 -k_4=0}
|k_4|^{2\beta}
\widehat u(k_1)
\widehat v(k_2)
\overline{\widehat w}(k_3)
\varphi_N^2(k_4)
\overline{\widehat w}(k_4)  \\
&=
\sum_{\substack{k_1 + k_2 - k_3-k_4=0\\
k_1\neq k_3,k_4}}
|k_4|^{2\beta}
\widehat u(k_1)
\widehat v(k_2)
\overline{\widehat w}(k_3)
\varphi_N^2(k_4)
\overline{\widehat w}(k_4) \\
&\quad+
\sum_{\substack{k_1 + k_2 - k_3-k_4=0\\
k_1=k_3\text{ or }k_1=k_4}}
|k_4|^{2\beta}
\widehat u(k_1)
\widehat v(k_2)
\overline{\widehat w}(k_3)
\varphi_N^2(k_4)
\overline{\widehat w}(k_4).
\end{split}
\]
The second term on the right-hand side contains the resonant contributions.
These terms can be handled directly. If $k_1=k_3$, then $k_2=k_4$, and
the corresponding contribution is bounded by
\begin{equation} \label{resonantContribution}
\begin{split}
N^{2(s-1+\beta)} &
\bigg|
\sum_{k_1}
\widehat u(k_1)\overline{\widehat w}(k_1)
\bigg|\cdot 
\bigg|
\sum_{|k_2|\sim N}
\widehat v(k_2)\overline{\widehat w}(k_2)
\bigg| \\
&\lesssim
N^{2\beta-1}
\|u\|_{H^s}\|v\|_{H^s}\|w\|_{H^{s-1}}^2,
\end{split}
\end{equation}
where we used $s>1/2$ to pair $H^s$ with $H^{s-1}$ and used the
frequency localization in the second factor.  More explicitly, after
summing over dyadic $N$, the remaining factor is
\[
\sum_N N^{2\beta-1}
\|P_Nv\|_{H^s}\|P_Nw\|_{H^{s-1}}
\lesssim \|v\|_{H^s}\|w\|_{H^{s-1}},
\]
by Cauchy--Schwarz and $2\beta-1\le0$.  If $k_1 = k_4$, then $k_2 = k_3$
and the same argument applies after interchanging the two pairs.

If instead we consider the resonant contributions of \eqref{DiffTerm1}, 
\begin{equation}
\begin{split}
\int_\T P_{N}^2 D_x^{2\beta}\bar w \cdot |u|^2 w \,dx &= \sum_{\substack{k_1- k_2 +k_3 - k_4 = 0 \\ k_1\neq k_2, k_4}}|k_4|^{2\beta} \hat{u}(k_1) \overline{\hat{u}}(k_2) \hat{w}(k_3) \phi_N^2(k_4) \overline{\hat{w}}(k_4) \\
&\quad + \sum_{\substack{k_1- k_2 +k_3 - k_4 = 0 \\ k_1=k_2 \text{ or } k_1 = k_4}}|k_4|^{2\beta} \hat{u}(k_1) \overline{\hat{u}}(k_2) \hat{w}(k_3) \phi_N^2(k_4) \overline{\hat{w}}(k_4).
\end{split}
\end{equation}
The resonant contributions are given by $k_1 = k_2$ or $k_1 = k_4$. If $k_1 =k_4$ then $k_2=  k_3$, we may argue exactly as in \eqref{resonantContribution}. If $k_1 = k_2$ then $k_3 = k_4$ and the summand is real valued and hence can be ignored due to taking the imaginary part in \eqref{DiffTerm1}.

In what follows, we therefore focus on the non-resonant cases, where $\Omega \neq 0$.

By projecting frequencies to $|k_i| \sim N_i$,
we first consider a special case where $N_4 \sim N_3 \sim N_2 \sim N_1$. 
In this case, we argue exactly as we did for the {\it a priori} estimate. Denoting by $u_i \in \{u, v\}$ for $i=1,2$, by H\"older, Corollary \ref{StrichartzSecondary} and \ref{Diffell4L4Bound}, we have
\begin{equation} \label{DiffCase1}
\begin{split}
|{\rm I}| +  |{\rm I\!I}| &\les \sum_{N \gg 1} N^{2s - 2 + 2\beta} \|\Tilde{P}_N w \|_{L^4_{Tx}} \|\Tilde{P}_N w \|_{L^4_{Tx}} \|\Tilde{P}_N u_1 \|_{L^4_{Tx}}\|\Tilde{P}_N u_2 \|_{L^4_{Tx}} \\
& \les \bigg(\sum_{N} \|D_x^{s -1- \kappa(\alpha,\beta)} \Tilde{P}_N w \|_{L^4_{Tx}}^4 \bigg)^{1/2}  \times \prod_{i=1}^2\bigg(\sum_{N} \|D_x^{ \beta +\kappa(\alpha,\beta)} \Tilde{P}_N u_i \|_{L^4_{Tx}}^4 \bigg)^{1/4} \\
& \les T^{1/2}\|w\|_{L^\infty_TH^{{s-1}}}^2 \|u_1\|_{L^{\infty}_T H^{\gamma(\alpha,\beta)}}\|u_2\|_{L^{\infty}_T H^{\gamma(\alpha,\beta)}}(1 + T(\|u_1\|_{L^\infty_T H^{1/2+}}+ \|u_2\|_{L^\infty_T H^{1/2+}})^2)^4
\end{split}
\end{equation}
which is sufficient provided $s \ge \max( \gamma(\alpha,\beta), 1/2+) $. As we did before in the proof of Proposition \ref{Apriori}, we denote by $N_{(1)} \geq N_{(2)} \geq N_{(3)} \geq N_{(4)}$ a size ordering on $N_1,N_2,N_3,N_4$.
By excluding the case treated above, we may assume that $N_{(4)} \ll N_{(1)}$.\\

\medskip

\subsection{First term estimate}~\\

We begin by considering the first term \eqref{DiffTerm1}.
In what follows, we distinguish two cases based on the size of the resonance relation $\Omega$;
\begin{itemize}
    \item {\it Case 1}: $|\Omega| \gtrsim N_{(1)}^\alpha$;
    \item {\it Case 2}: $1\le |\Omega| \ll N_{(1)}^{\alpha}$.
\end{itemize}
We begin considering {\it Case 1}.\\

\noindent
{\it \large Case 1: $|\Omega| \ges N_{(1)}^{\alpha}$}.\\

\noindent
In order to estimate \eqref{DiffTerm1}, it suffices to bound
\begin{align}
\label{Term1-0}
\begin{split}
{\rm I}
\lesssim
\sum_{N_1,N_2,N_3,N_4}
N_4^{2(s-1+\beta)}
\bigg|
\int_0^T\int_{\T}
P_{N_4}\bar w \,
P_{N_3}w\,
P_{N_2}\bar u\,
P_{N_1} u
\, dxdt
\bigg|.
\end{split}
\end{align}
We shall prove the following multilinear estimate:
\begin{align}
\label{Term1-1}
\begin{split}
\bigg| & 
\int_0^T\int_{\T}
 P_{N_1} v_1 P_{N_2}\bar v_2 P_{N_3} v_3 P_{N_4}\bar v_4
\, dxdt
\bigg| \\
& \lesssim
T^\theta N_{(1)}^{-\alpha}
\big(N_{(3)}N_{(4)}\big)^{\frac12}
\sum_{i=1}^4 \Big( N_{(1)}^{0+} \|P_{N_i}v_i\|_{L_T^\infty L^2}  + 
\|P_{N_i}v_i\|_{X^{0,1}} \Big)
\prod_{j\neq i}
\|P_{N_j}v_j\|_{L_T^\infty L^2},
\end{split}
\end{align}
where $\theta > 0$ depends only on $\alpha$ and each $v_i\in\{w, u\}$, provided $|\Omega| \ges N_{(1)}^\alpha$ and $N_{(4)} \ll N_{(1)}$.

Assuming \eqref{Term1-1}, we can prove Proposition \ref{DiffEstimate} for the term given in \eqref{DiffTerm1}. Indeed, applying \eqref{Term1-1} to \eqref{Term1-0} under conditions $N_{(1)} \sim N_{(2)}$ and $N_{(4)} \ll N_{(1)}$, we obtain the bound
\begin{align}
\begin{split}
{\rm I} &
\lesssim
\sum_{\substack{N_1,N_2,N_3,N_4\ge 1\\ N_{(1)}\sim N_{(2)}}}
 N_4^{s-1+2\beta}
N_3^{1-s}
N_1^{-s}
N_2^{-s}
N_{(1)}^{-\alpha + 2\beta}
\big(N_{(3)}N_{(4)}\big)^{\frac12} \\
&\quad \times\|P_{N_1} u\|_{Z^{s}_T} \|P_{N_2} u\|_{Z^{s}_T} 
\big( \|P_{N_3} w\|_{Z^{s-1}_T} \| P_{N_4} w\|_{L_T^\infty H^{s-1}} + \|P_{N_3} w\|_{L_T^\infty H^{s-1}} \| P_{N_4} w\|_{Z^{s-1}_T} \big)\\
&  \les \|u\|_{Z^s_T}^2 \| w\|_{L_T^\infty H^{s-1}} \| w\|_{Z^{s-1}_T} ,
\end{split}
\label{DyadicSumClaim}
\end{align}
provided $s > \frac12$ and $\alpha \ge 4\beta$. Note that we can take without loss of generality $\beta > 0$, and use $N_{(1)}^{0+} \leq N_{(1)}^{2\beta}$.

Indeed, recall
$N_{(1)}\sim N_{(2)}$.
We split the summation in \eqref{DyadicSumClaim} according to whether $N_4\sim N_{(1)}$ or $N_4\ll N_{(1)}$.
First, suppose that $N_4\sim N_{(1)}$. 
Since $N_{(1)}\sim N_{(2)}$, there is another frequency comparable to $N_{(1)}$. The worst case is when the compensating factor is $N_3^{1-s}$, namely $N_3\sim N_{(1)}$. Then
$N_4^{s-1+2\beta}N_3^{1-s}
\lesssim N_{(1)}^{2\beta}$.
Thus, the contribution for the highest frequency is
$N_{(1)}^{4\beta-\alpha }$. Overall we can reduce to factors
\[ N_{(1)}^{4\beta - \alpha}N_{(3)}^{\frac12 - s}N_{(4)}^{\frac12 - s},\]
and hence everything is summable provided $4\beta \leq \alpha$ and $s > \frac12$.

It remains to consider the case $N_4\ll N_{(1)}$. In this case, the power $N_4^{s-1+2\beta}$ is carried by a lower frequency. 
We need only consider the worst case where $N_3 \sim N_{(1)}$; if $N_1 \sim N_3 \sim N_{(1)}$ then $|\Omega| \ges N_{(1)}^\alpha$ under $N_4\ll N_{(1)}$ in view of Lemma \ref{ResonanceRelation}. Hence since the two largest frequencies must be comparable $N_{(1)} \sim N_2 \sim N_3 \gg N_1$, then
\[
N_4^{s-1+2\beta}
N_3^{1-s}
N_1^{-s}
N_2^{-s}
N_{(1)}^{-\alpha + 2\beta}
\big(N_{(3)}N_{(4)}\big)^{\frac12} \les N_{(1)}^{1-2s -\alpha+2\beta} N_4^{s-\frac12+2\beta} N_{(4)}^{\frac12 -s}
\les N_{(1)}^{\frac12 -s -\alpha + 4\beta},
\]
which is summable under the slightly stronger condition $s > \frac12$ and $\alpha \ge 4\beta$.
This completes the proof of \eqref{DyadicSumClaim}.

\medskip 

To complete this case for the term \eqref{DiffTerm1}, it remains to prove \eqref{Term1-1}. 
In what follows, we focus on this estimate. Without loss of generality, we may assume that
$N_1\ge N_2\ge N_3\ge N_4$
in \eqref{Term1-1}.
Define
\begin{align}\label{I1}
{\rm I}^{(1)}(P_{N_1}v_1, P_{N_2}v_2, P_{N_3}v_3, P_{N_4}v_4) := \bigg|
\int_0^T\int_{\T}
P_{N_1} v_1 P_{N_2}\bar v_2 P_{N_3} v_3 P_{N_4} \bar v_4 \, dxdt \bigg|.
\end{align}
where implicitly we have assumed that $|\Omega| \gtrsim N_{(1)}^\alpha$
and $N_{(4)} \ll N_{(1)}$.
After taking extensions given by \eqref{extension}, still denoted by $v_i$,
it is sufficient to consider bounding a term of the form
\begin{equation}\label{J-21}
\begin{split}
 &  {\rm I}^{(1)}(P_{N_1}\ind_T v_1,P_{N_2}\ind_T v_2, P_{N_3}v_3, P_{N_4}v_4) = \bigg| \int_\R \int_\T  P_{N_1}\ind_T v_1 P_{N_2}\ind_T\bar v_2  P_{N_3}  v_3  P_{N_4} \bar v_4 \, dxdt \bigg| .
\end{split}
\end{equation}
Decomposing indicator functions
\begin{equation}
\begin{split}
{\rm I}^{(1)} & (P_{N_1}\ind_T v_1,P_{N_2}\ind_T v_2, P_{N_3} v_3, P_{N_4}v_4)\\
& \leq {\rm I}^{(1)} (P_{N_1}\indh  v_1,P_{N_2}\ind_T v_2, P_{N_3} v_3, P_{N_4} v_4) \\
&\quad + {\rm I}^{(1)} (P_{N_1}\indl v_1,P_{N_2} \indh v_2, P_{N_3} v_3, P_{N_4} v_4) \\
& \quad +  {\rm I}^{(1)} (P_{N_1}\indl v_1,P_{N_2}\indl v_2, P_{N_3} v_3, P_{N_4} v_4) \\
& =: {\rm I}^{(1)}_1 + {\rm I}_2^{(1)} + {\rm I}_3^{(1)}.
\end{split}
\end{equation}
With choice $R = N_{(1)}^{\alpha -}$, by Lemma \ref{time-multi} and H\"older,
we have
\begin{equation}
\begin{split}
{\rm I}^{(1)}_1 + {\rm I}^{(1)}_2 &\les (N_3N_4)^{\frac12} \|\indh\|_{L^1_t}\|P_{N_4} v_4\|_{L^\infty_t L^2_x}\|P_{N_3} v_3\|_{L^\infty_t L^2_x}\|P_{N_2} v_2\|_{L^\infty_t L^2_x}\|P_{N_1} v_1\|_{L^\infty_t L^2_x} \\
& \les T^\theta N_1^{-\alpha+} (N_3N_4)^{\frac12} \prod_{i=1}^4 \|P_{N_i} v_i\|_{L^\infty_t L^2_x},
\end{split}
\end{equation}
which is sufficient for \eqref{Term1-1}.

For the term $ {\rm I}_{3}^{(1)} $, we introduce the following modulation decomposition
\begin{equation}
\begin{split}
{\rm I}_{3}^{(1)}  & \leq {\rm I}_{3}^{(1)} (P_{N_1}Q_{\gtrsim L}\indl  v_1,P_{N_2}\indl v_2, P_{N_3}v_3, P_{N_4} v_4) \\
&\quad + {\rm I}_{3}^{(1)} (P_{N_1}Q_{\ll L}\indl v_1,P_{N_2}Q_{\gtrsim L}\indl v_2, P_{N_3} v_3, P_{N_4} v_4) \\
&\quad + {\rm I}_{3}^{(1)} (P_{N_1}Q_{\ll L}\indl v_1,P_{N_2}Q_{\ll L}\indl v_2, P_{N_3}Q_{\gtrsim L}v_3, P_{N_4} v_4) \\
& \quad+ {\rm I}_{3}^{(1)} (P_{N_1}Q_{\ll L} \indl v_1,P_{N_2}Q_{\ll L}\indl v_2, P_{N_3}Q_{\ll L}  v_3, P_{N_4}Q_{\gtrsim L} v_4) \\ 
& =: {\rm I}_{3,1}^{(1)} + {\rm I}_{3,2}^{(1)}+ {\rm I}_{3,3}^{(1)}+ {\rm I}_{3,4}^{(1)}.
\end{split}
\end{equation}
By H\"older, Lemma \ref{time-multi}, $L := N_{(1)}^{\alpha} \gg R$ and Lemma \ref{QLlow}, we have
\begin{equation}
\begin{split}
{\rm I}_{3,1}^{(1)} & \les \|\indl\|_{L^2_t} \|P_{N_1} Q_{\gtrsim L} v_1\|_{L^2_t L^2_x} \|P_{N_2}  v_2\|_{L^\infty_t L^2_x}\|P_{N_3}v_3\|_{L^\infty_{tx}}\|P_{N_4} v_4\|_{L^\infty_{tx}} \\
& \quad \les T^{1/2} N_{(1)}^{-\alpha} (N_3N_4)^{\frac12} \|P_{N_1}v_1\|_{X^{0,1} }\|P_{N_2} v_2\|_{L^\infty_t  L^2_x} \|P_{N_3}v_3 \|_{L^\infty_t  L^2_x}\|P_{N_4}v_4\|_{L^\infty_t  L^2_x},
\end{split}
\end{equation}
which implies \eqref{Term1-1}.
Similarly, we have
\begin{equation}
\begin{split}
 {\rm I}_{3,2}^{(1)} & \les \|\indl\|_{L^2_t} \|P_{N_1} v_1\|_{L^\infty_t L^2_x} \|P_{N_2} Q_{\gtrsim L}  v_2\|_{L^2_t L^2_x}\|P_{N_3}v_3\|_{L^\infty_{tx}}\|P_{N_4} v_4\|_{L^\infty_{tx}} \\
& \quad \les T^{1/2} N_{(1)}^{-\alpha} (N_3N_4)^{\frac12} \|P_{N_1}v_1\|_{L^\infty_t  L^2_x }\|P_{N_2} v_2\|_{X^{0,1}} \|P_{N_3}v_3 \|_{L^\infty_t  L^2_x}\|P_{N_4}v_4\|_{L^\infty_t  L^2_x}.
\end{split}
\end{equation}
For $ {\rm I}_{3,3}^{(1)}$, using Lemma \ref{QLlow} gets
\begin{equation}
\begin{split}
 {\rm I}_{3,3}^{(1)} & \les \|\indl\|_{L^2_t} \|P_{N_4}v_4\|_{L^\infty_{tx}} \|P_{N_3}  Q_{\gtrsim L} (\indl  v_3) \|_{L^2_{t} L^\infty_x}\|P_{N_2}  v_2\|_{L^\infty_t L^2_x} \|P_{N_1} v_1\|_{L^\infty_t L^2_x} \\
 & \les T^{1/2} (N_3N_4)^{\frac12} \|P_{N_4}v_4\|_{L^\infty_t L^2_x} \|P_{N_3}  Q_{\gtrsim L}  v_3\|_{L^2_{tx}} \|P_{N_2}  v_2\|_{L^\infty_t L^2_x} \|P_{N_1} v_1\|_{L^\infty_t L^2_x} \\
& \quad \les T^{1/2} N_{(1)}^{-\alpha} (N_3N_4)^{\frac12} \|P_{N_4}v_4\|_{L^\infty_t L^2_x }\|P_{N_3} v_3\|_{X^{0,1}} \|P_{N_2}v_2 \|_{L^\infty_t  L^2_x}\|P_{N_1}v_1\|_{L^\infty_t  L^2_x}.
\end{split}
\end{equation}
Similarly, we have
\begin{equation}
\begin{split}
 {\rm I}_{3,4}^{(1)} & \les \|\indl\|_{L^2_t} \|P_{N_4} Q_{\gtrsim L} (\indl  v_4)\|_{L^2_{t} L^\infty_x} \|P_{N_3} v_3\|_{L^\infty_{tx}} \|P_{N_2}  v_2\|_{L^\infty_t L^2_x}\|P_{N_1} v_1\|_{L^\infty_t L^2_x} \\
 & \les T^{\frac12} (N_3N_4)^{\frac12} \|P_{N_4} Q_{\gtrsim L}  v_4\|_{L^2_{tx}} \|P_{N_3}v_3\|_{L^\infty_t L^2_x}\|P_{N_2}v_2\|_{L^\infty_t L^2_x}\|P_{N_1}v_1\|_{L^\infty_t L^2_x} \\
& \quad \les T^{1/2} N_{(1)}^{-\alpha} (N_3N_4)^{\frac12} \|P_{N_4}v_4\|_{X^{0,1}}\|P_{N_3} v_3\|_{L^\infty_t L^2_x } \|P_{N_2}v_2 \|_{L^\infty_t  L^2_x}\|P_{N_1}v_1\|_{L^\infty_t  L^2_x}.
\end{split}
\end{equation}
All combined this proves \eqref{Term1-1} for the term ${\rm I}_{3}^{(1)}$, and hence completes the proof of {\it Case 1}. \\

\noindent
{\it \large Case 2: $1 \leq |\Omega| \ll N_{(1)}^{\alpha}$}.\\

We first note that
\begin{equation}
\begin{split}
2 N^{2(s-1)}\Im & \bigg[ \int_0^T \int_\T P_N^2 D_x^{2\beta} \bar{w} \cdot|u|^2w \, dx dt \bigg] \\
& = 2N^{2(s-1)}\Im \bigg[ \int_0^T \sum_{k_1 - k_2 + k_3 - k_4 = 0}|k_4|^{2\beta}\phi_N^2(k_4)\bar{\hat{w}}(k_4) \hat{w}(k_3) \bar{\hat{u}}(k_2) \hat{u}(k_1) \bigg].
\end{split}
\end{equation}
After frequency decomposition, if $|k_1 - k_2| \ll N_{(1)}$ and
$|k_2 - k_3| \ll N_{(1)}$, then
$|k_1| \sim |k_2| \sim |k_3| \sim |k_4|$, which reduces to the special
case treated above. If instead
$|k_1 - k_2|, |k_2 - k_3| \gtrsim N_{(1)}$, Lemma
\ref{ResonanceRelation} returns us to {\it Case 1}. We therefore treat the
two remaining subcases separately. \\

\noindent
{\it \large Case 2.1: $|k_1 - k_2| \ll N_{(1)}$ and $|k_2 - k_3| \gtrsim N_{(1)}$.}\\

We further observe that
\[
\begin{split}
2 N^{2(s-1)}\Im & \bigg[ \int_0^T \int_\T P_N^2 D_x^{2\beta} \bar{w} \cdot|u|^2w \, dx dt \bigg] \\
& =  N^{2(s-1)}\Im \bigg[ \int_0^T \int_\T P_N^2 D_x^{2\beta} \bar{w} \cdot|u|^2w - \bar{w} \cdot|u|^2 P_N^2 D_x^{2\beta} w \, dx dt \bigg] \\
& =  N^{2(s-1)}\Im \bigg[ \int_0^T \sum_{k_1 - k_2 + k_3 - k_4 = 0} B(k_4,k_3) \bar{\widehat{w}}(k_4) \widehat{w}(k_3) \bar{\widehat{ u}}(k_2) \widehat{ u}(k_1)
dt \bigg] \\
& =  N^{2(s-1)}\Im \bigg[ \int_0^T \int_{\T} K_B ( {\bar w},  w) \bar u u  \,d x
dt \bigg],
\end{split}
\]
where $K_B$ is the multiplier with symbol $B$ defined similarly as before in the proof of Proposition \ref{Apriori}, specifically
\[
B(k_4,k_3) = \phi_N^2 (k_4) |k_4|^{2\beta} - \phi_N^2 (k_3) |k_3|^{2\beta}.
\]
As before, by Lemma \ref{commutator}, the multiplier $K_B$ satisfies, for $1/p + 1/q + 1/r = 1$ and $1 \leq p,q,r \leq \infty$,
\begin{equation} \label{B2Commutator}
\bigg|\int_\T fg \cdot K_B(u,v) \,dx\bigg| \leq \|[P_ND_x^{2\beta}, fg](u) \cdot v\|_{L^1_x} \les N^{2\beta - 1}\|D_x(fg)\|_{L^p_x}\|u\|_{L^q_x}\|v\|_{L^r_x}.
\end{equation}
After dyadic decomposition, we can write \eqref{DiffTerm1} as
\begin{equation}\label{J-1T}
{\rm I} \les \sum_{N \gg 1}\sum_{N_1,N_2,N_3,N_4} N^{2(s-1)}\Im \bigg[\int_0^T \int_{\T} K_B (P_{N_4} \bar w, P_{N_3}{w})\cdot P_{N_2}\bar u P_{N_1} u \, dx dt\bigg] . 
\end{equation}
We argue by showing the following multilinear estimate.
\begin{align}
\label{Term2-1}
\begin{split}
\bigg| &
\int_0^T\int_{\T}
K_B (P_{N_4} \bar v_4, P_{N_3} v_3) P_{N_2} \bar v_2 P_{N_1}v_1
\, dxdt
\bigg| \\
& \lesssim
T^\theta N^{2\beta-1}N_{(1)}^{1-\alpha}
\big(N_{(3)}N_{(4)}\big)^{\frac12}
\sum_{i=1}^4 \left(N_{(1)}^{0+}\|P_{N_i}v_i\|_{L^\infty_T L^2_x} + 
\|P_{N_i}v_i\|_{X^{0,1}_T} \right)
\prod_{j\neq i}
\|P_{N_j}v_j\|_{L_T^\infty L^2_x}.
\end{split}
\end{align}
Again without loss of generality we take $\beta > 0$. If we assume \eqref{Term2-1}, then 
\begin{equation}
\begin{split}
{\rm I} &\les T^\theta \sum_{\substack{N_1,N_2,N_3,N_4 \geq 1 \\ N_{(1)}\sim N_{(2)} \\ N_4 \sim N \text{ or } N_3 \sim N}} N^{2(s+\beta-1)-1}N_{(1)}^{2\beta+1-\alpha-2s}(N_{(3)}N_{(4)})^{\frac12 - s} N_3N_4\\
& \quad \times \|P_{N_1}u\|_{Z^{s}_T}\|P_{N_2}u\|_{Z^s_T}\left(\|P_{N_3}w\|_{Z^{s-1}_T}\|P_{N_4}w\|_{L^\infty_T H^{s-1}} + \|P_{N_3}w\|_{L^\infty_T H^{s-1}}\|P_{N_4}w\|_{Z^{s-1}_T} \right) \\
& \les T^\theta \|u\|_{Z^s_T}^2 \|w\|_{L^\infty_T H^{s-1}} \|w\|_{Z^{s-1}_T},
\end{split}
\end{equation}
provided $s > \frac12$, $\alpha \geq 4\beta $ and $s \geq \beta + 1- \frac\alpha2$.

Indeed splitting into cases $N_{(1)} \sim N_3 \sim N_4 \gtrsim N_1, N_2$ or $N_{(1)} \sim N_1 \sim N_2 \gtrsim N_3,N_4$, in the former after collecting factors
\[  N_{(1)}^{4\beta -\alpha}N_1^{\frac{1}{2}-s}N_2^{\frac12 -s },\]
we obtain summability for $\alpha \geq 4\beta $ and $s > \frac12$. In the latter, without loss of generality $N \sim N_4$,
\begin{equation}
N_{(1)}^{2\beta + 1-\alpha - 2s} N_3^{\frac32 - s}N_4^{2(s+\beta -1) +\frac12 - s} \les N_{(1)}^{2\beta + 2 - \alpha - 2s} N_3^{\frac12 -s} N_4^{2(s+\beta - 1) + \frac12 -s} \les N_3^{\frac12 -s}N_4^{\frac12 -s}N_4^{4\beta-\alpha},
\end{equation}
which is summable provided $s \geq \beta + 1-\frac\alpha2$,
$\alpha \geq 4\beta$, and $s> \frac12$.

To complete this subcase, we show \eqref{Term2-1}. First we denote the LHS of \eqref{Term2-1} by
\begin{align}
{\rm I}^{(2.1)} (v_1,v_2,v_3,v_4) := \bigg| \int_0^T \int_{\T} K_B ( \bar v_4, v_3) \bar v_2 v_1  \,dx dt \bigg|,
\end{align}
and further decompose into 
\begin{align}
\begin{split}
{\rm I}^{(2.1)} & (P_{N_1} v_1, P_{N_2} v_2, P_{N_3} v_3,  P_{N_4} v_4) \\
& \les \sum_{M \le N_{(3)}}  \bigg| \int_0^T \int_{\T}  K_B (P_{N_4} \bar v_4, P_{N_3}  v_3) P_M (P_{N_2} \bar v_2 P_{N_1} v_1) dxdt \bigg|. 
\end{split}
\end{align}
By the position of the frequency projection $P_M$, that is since $N_{(1)} \gg|k_2 - k_1| \sim M$, we must have $M \les N_{(3)}$
and thus the summation in $M$ contributes at most a factor of $\log N_{(3)}$, which is harmless, and can be omitted.

Let us consider the integral, introducing real line extensions and indicator functions for the time interval as we have done before,
\begin{align}
&{\rm I}^{(2.1,M)} (P_{N_1} \ind_T v_1, P_{N_2}\ind_T v_2, P_{N_3} v_3,  P_{N_4} v_4) \\
& \quad := \bigg| \int_\R \int_{\T} K_B (P_{N_4}\bar  v_4, P_{N_3} v_3) P_M(P_{N_2} \ind_T \bar v_2 P_{N_1} \ind_T v_1 ) \,dxdt \bigg|. 
\end{align}
Under these frequency projections and the current restrictions imposed
we have $|\Omega| \ges N_{(1)}^{\alpha - 1}M$ for each $M$. Let $v_{(i)}$ be term corresponding to frequency projection $P_{N_{(i)}}$. 
As in previous cases we similarly decompose:
\begin{equation}
\begin{split}
{\rm I}^{(2.1,M)} & (P_{N_1} \ind_T v_1,P_{N_2} \ind_T v_2, P_{N_3} v_3, P_{N_4} v_4)\\
& \leq {\rm I}^{(2.1,M)} (P_{N_1}\indh v_1,P_{N_2}\ind_T v_2, P_{N_3} v_3, P_{N_4} v_4) \\
&\quad + {\rm I}^{(2.1,M)} (P_{N_1}\indl v_1,P_{N_2}\indh v_2, P_{N_3} v_3, P_{N_4} v_4) \\
& \quad +  {\rm I}^{(2.1,M)} (P_{N_1}\indl v_1,P_{N_2}\indl v_2, P_{N_3} v_3, P_{N_4} v_4) \\
& =: {\rm I}^{(2.1,M)}_1 + {\rm I}_2^{(2.1,M)} + {\rm I}_3^{(2.1,M)}.
\end{split}
\end{equation}
Then by choosing $R = N_{(1)}^{\alpha -1-} M^{1+} \ll N_{(1)}^{\alpha -1} M $, \eqref{B2Commutator},\eqref{indL1Bound}, Bernstein and H\"older inequalities,
we have
\begin{align}
\begin{split}
{\rm I}^{(2.1,M)}_1 & (P_{N_1}\indh v_1,P_{N_2}\ind_T v_2, P_{N_3}v_3, P_{N_4} v_4) \\
& \les N^{2\beta-1} M \|\indh\|_{L^1_t} \|P_{N_{(1)}} v_{(1)}\|_{L^\infty_t L^2_x} \|P_{N_2} v_{(2)}\|_{L^\infty_t L^2_x} \| P_{N_{(3)}} v_{(3)}\|_{L^\infty_{tx}}\| P_{N_{(4)}}  v_{(4)} \|_{L^\infty_{tx}} \\
& \les T^\theta N^{2\beta-1} N_{(1)}^{1-\alpha +} \|P_{N_{(1)}} v_{(1)}\|_{L^\infty_t L^2_x} \|P_{N_2} v_{(2)}\|_{L^\infty_t L^2_x} \| P_{N_{(3)}} v_{(3)}\|_{L^\infty_{tx}} \|P_{N_{(4)}}   v_{(4)} \|_{L^\infty_{tx}}
\end{split}
\end{align}
which is sufficient for \eqref{Term2-1}.
With a similar argument for ${\rm I}^{(2.1,M)}_2$, together they imply
\begin{equation}
\begin{split}
& {\rm I}^{(2.1,M)}_1 + {\rm I}^{(2.1,M)}_2 \\
&\les T^\theta N^{2\beta-1}N_{(1)}^{1-\alpha +} \|P_{N_{(4)}} v_{(4)}\|_{L^\infty_{tx} }\|P_{N_{(3)}} v_{(3)}\|_{L^\infty_{tx}}\|P_{N_{(2)}} v_{(2)}\|_{L^\infty_t L^2_x}\|P_{N_{(1)}} v_{(1)}\|_{L^\infty_t L^2_x} \\
&\les T^\theta N^{2\beta -1}  N_{(1)}^{1-\alpha+} (N_{(3)} N_{(4)})^{\frac12} \prod_{i=1}^4 \|P_{N_{i}} v_{i}\|_{L^\infty_t L^2_x} \\
\end{split}
\end{equation}
which is again sufficient for \eqref{Term2-1}.

For the term $ {\rm I}_{3}^{(2.1,M)} $, we introduce the following modulation decomposition
\begin{equation}
\begin{split}
{\rm I}_{3}^{(2.1,M)}  & \leq {\rm I}_{3}^{(2.1,M)} (P_{N_1}Q_{\gtrsim L}\indl v_1,P_{N_2}\indl v_2, P_{N_3} v_3, P_{N_4} v_4) \\
&\quad + {\rm I}_{3}^{(2.1,M)} (P_{N_1}Q_{\ll L}\indl v_1,P_{N_2}Q_{\gtrsim L}\indl v_2, P_{N_3} v_3, P_{N_4} v_4) \\
&\quad + {\rm I}_{3}^{(2.1,M)} (P_{N_1}Q_{\ll L}\indl v_1,P_{N_2}Q_{\ll L}\indl v_2, P_{N_3}Q_{\gtrsim L} v_3, P_{N_4} v_4) \\
& \quad+ {\rm I}_{3}^{(2.1,M)} (P_{N_1}Q_{\ll L}\indl  v_1,P_{N_2}Q_{\ll L}\indl v_2, P_{N_3}Q_{\ll L} v_3, P_{N_4}Q_{\gtrsim L} v_4) \\ 
& =: {\rm I}_{3,1}^{(2.1,M)} + {\rm I}_{3,2}^{(2.1,M)}+ {\rm I}_{3,3}^{(2.1,M)}+ {\rm I}_{3,4}^{(2.1,M)}.
\end{split}
\end{equation}
We consider the case $N_1 \sim  N_2 \gtrsim N_3, N_4$.  The case
$N_3 \sim N_4 \gtrsim N_1,N_2$ follows after exchanging the pairs
$(v_1,v_2)$ and $(v_3,v_4)$.  In either ordering, Bernstein's inequality is
applied only to the two lower-frequency factors.
By H\"older, \eqref{indL2Bound}, $L := N_{(1)}^{\alpha-1} M \gg R$ and Lemma \ref{QLlow}
\begin{equation}
\begin{split}
 {\rm I}_{3,1}^{(2.1,M)} & \les N^{2\beta -1} M \|\indl\|_{L^2_t} \|P_{N_1}  Q_{\gtrsim L} v_1\|_{ L^2_{tx}} \|P_{N_2}  v_2\|_{L^\infty_t L^2_x}\|P_{N_3}v_3\|_{L^\infty_{tx}}\|P_{N_4} v_4\|_{L^\infty_{tx}} \\
& \les T^{1/2} N^{2\beta-1}N_{(1)}^{1-\alpha} (N_3N_4)^{\frac12} \|P_{N_4}v_4\|_{L^\infty_t L^2_x }\|P_{N_3} v_3\|_{L^\infty_t  L^2_x} \|P_{N_2}v_2 \|_{L^\infty_t  L^2_x}\|P_{N_1}v_1\|_{X^{0,1}},
\end{split}
\end{equation}
which implies \eqref{Term2-1}.
Similarly, we have
\begin{equation}
\begin{split}
 {\rm I}_{3,3}^{(2.1,M)} & \les N^{2\beta -1} M \|\indl\|_{L^2_t} \|P_{N_1}  v_1\|_{L^\infty_t L^2_x } \|P_{N_2}  v_2\|_{L^\infty_t L^2_{x}} \|P_{N_3}Q_{\gtrsim L}v_3\|_{L^2_t L^\infty_{x}}\|P_{N_4}  v_4\|_{L^\infty_{tx}} \\
& \les T^{1/2} N^{2\beta-1}N_{(1)}^{1-\alpha} (N_3N_4)^{\frac12} \|P_{N_4}v_4\|_{L^\infty_t L^2_x }\|P_{N_3} v_3\|_{X^{0,1}} \|P_{N_2}v_2 \|_{L^\infty_t L^2_x}\|P_{N_1}v_1\|_{L^\infty_t  L^2_x},
\end{split}
\end{equation}
which is again sufficient for our purpose.  The bounds for
${\rm I}_{3,2}^{(2.1,M)}$ and ${\rm I}_{3,4}^{(2.1,M)}$ are obtained from
the displayed estimates by placing the $X^{0,1}$ norm on $v_2$ and $v_4$,
respectively; all frequency factors are unchanged.\\

\noindent
{\it \large Case 2.2: $|k_1 - k_2| \gtrsim N_{(1)}$ and $|k_2 - k_3| \ll N_{(1)}$.}\\

Under the given frequency restrictions, we shall prove the following multilinear estimate
\begin{equation}
\begin{split} \label{Term2-2}
&\bigg|\int_0^T \int_\T P_{N_1}v_1  P_{N_2}\bar v_2 P_{N_3}v_3 P_{N_4}\bar v_4\,dxdt \bigg| \\
& \quad \les T^\theta N_{(1)}^{1-\alpha} N_{(3)}^{-\frac12}N_{(4)}^{\frac12 +} \sum_{i = 1}^4 \big( N_{(1)}^{0+}\|P_{N_i}v_i\|_{L^\infty_T L^2_x} + \|P_{N_i}v_i\|_{X^{0,1}_T} \big) \prod_{j \neq i} \|P_{N_j} v_j \|_{L^\infty_T L^2_x}.
\end{split}
\end{equation}
If we assume \eqref{Term2-2} then
\begin{equation}
\begin{split}
{\rm I} &\les T^\theta \sum_{\substack{N_1,N_2,N_3,N_4 \geq 1 \\ N_{(1)} \sim N_{(2)} \\ N_4 \sim N \text{ or } N_3 \sim N}} N^{2(s+\beta - 1)} N_{(1)}^{2\beta +1-\alpha -2s} N_{(3)}^{-\frac12 -s}N_{(4)}^{\frac12 -s +} N_3N_4\\
& \quad \times \|P_{N_1}u\|_{Z^{s}_T} \|P_{N_2}u\|_{Z^s_T}\big(\|P_{N_3}w\|_{Z^{s-1}_T}\|P_{N_4}w\|_{L^\infty_T H^{s-1}} + \|P_{N_3}w\|_{L^\infty_T H^{s-1}}\|P_{N_4}w\|_{Z^{s-1}_T} \big) \\
&\les T^\theta \|u\|_{Z^s_T}^2 \|w\|_{L^\infty_T H^{s-1}} \|w\|_{Z^{s-1}_T},
\end{split}
\end{equation}
provided $s  > \frac12$, $\alpha \geq 4\beta$ and $s \geq \beta + 1 -\frac\alpha2$. Indeed splitting into cases $N_{(1)} \sim N_2 \sim N_3 \gtrsim N_1,N_4$ or $N_{(1)} \sim N_1 \sim N_4 \gtrsim N_2, N_3$, by collecting factors in the former case, if $N \sim N_4$

\begin{equation}
N_{(3)}^{\frac12 -s}N_{(4)}^{\frac12 -s +} N_4^{2(s+\beta-1)}N_3^{2\beta +2-\alpha-2s} 
\les N_4^{4\beta -\alpha} N_{(3)}^{\frac12 -s }N_{(4)}^{\frac12 -s +}.
\end{equation}
With similar for $N \sim N_3$, in both cases $s > \frac12$, $\alpha \geq 4\beta$ and $s \geq \beta + 1- \frac\alpha2$ is sufficient for summability. The latter case is analogous.

It remains to prove \eqref{Term2-2}. By denoting the LHS by
\begin{equation}
{\rm I}^{(2.2)}(P_{N_1}v_1, P_{N_2}v_2, P_{N_3}v_3, P_{N_4}v_4) := \bigg|\int_0^T \int_\T P_{N_1}v_1 P_{N_2}\bar v_2 P_{N_3} v_3  P_{N_4} \bar v_4 \, dxdt\bigg|
\end{equation}
and further decompose
\begin{equation}
\begin{split}
&{\rm I}^{(2.2)}(P_{N_1}v_1, P_{N_2}v_2, P_{N_3}v_3, P_{N_4}v_4) \\
& \qquad \les \sum_{M \les N_{(3)}} \bigg|\int_0^T \int_\T P_{N_1}v_1 P_M(P_{N_2}\bar v_2 P_{N_3} v_3 ) P_{N_4} \bar v_4 \, dxdt\bigg|.
\end{split}
\end{equation}
Again the summation in $M$ contributes at most a factor of $\log N_{(3)}$ and can be omitted. By replacing by extensions \eqref{extension}, we consider the integral
\begin{equation}
\begin{split}
&{\rm I}^{(2.2,M)}(P_{N_1}\ind_T v_1, P_{N_2}\ind_T v_2, P_{N_3}v_3, P_{N_4}v_4) \\
& \qquad := \bigg|\int_\R \int_\T P_{N_1} \ind_T v_1 P_M(P_{N_2} \ind_T \bar v_2 P_{N_3} v_3 ) P_{N_4} \bar v_4 \, dxdt\bigg|.
\end{split}
\end{equation}
As before, let $v_{(i)}$ denote the term corresponding to frequency projection $P_{N_{(i)}}$ and decompose,
\begin{equation}
\begin{split}
{\rm I}^{(2.2,M)} & (P_{N_1} \ind_T v_1,P_{N_2} \ind_T v_2, P_{N_3} v_3, P_{N_4} v_4)\\
& \leq {\rm I}^{(2.2,M)} (P_{N_1}\indh v_1,P_{N_2}\ind_T v_2, P_{N_3} v_3, P_{N_4} v_4) \\
&\quad + {\rm I}^{(2.2,M)} (P_{N_1}\indl v_1,P_{N_2}\indh v_2, P_{N_3} v_3, P_{N_4} v_4) \\
& \quad +  {\rm I}^{(2.2,M)} (P_{N_1}\indl v_1,P_{N_2}\indl v_2, P_{N_3} v_3, P_{N_4} v_4) \\
& =: {\rm I}^{(2.2,M)}_1 + {\rm I}_2^{(2.2,M)} + {\rm I}_3^{(2.2,M)}.
\end{split}
\end{equation}
Estimating the first term, choose
$R=N_{(1)}^{\alpha-1-}M^{1+}$, so that
$R\ll L:=N_{(1)}^{\alpha-1}M$.  Using \eqref{indL1Bound}, Lemma
\ref{SobolevMult}, Bernstein's inequality, and H\"older's inequality,
\begin{equation}
\begin{split}
{\rm I}^{(2.2,M)}_1 & \les \|\indh\|_{L^1_t}\|P_{N_{(1)}}v_{(1)}\|_{L^\infty_t L^2_x}\|P_{N_{(2)}}v_{(2)}\|_{L^\infty_t L^2_x}\|P_M(P_{N_{(3)}}v_{(3)}P_{N_{(4)}}v_{(4)})\|_{L^\infty_{tx}} \\
& \les M\|\indh\|_{L^1_t}\|P_{N_{(1)}}v_{(1)}\|_{L^\infty_t L^2_x}\|P_{N_{(2)}}v_{(2)}\|_{L^\infty_t L^2_x}\|P_{N_{(3)}}v_{(3)}P_{N_{(4)}}v_{(4)}\|_{L^\infty_{t}H^{-\frac12}} \\
& \les T^\theta N_{(1)}^{1-\alpha+}N_{(3)}^{-\frac12}N_{(4)}^{\frac12 +}\|P_{N_{(1)}}v_{(1)}\|_{L^\infty_t L^2_x}\|P_{N_{(2)}}v_{(2)}\|_{L^\infty_t L^2_x}\|P_{N_{(3)}}v_{(3)}\|_{L^\infty_t L^2_x} \|P_{N_{(4)}} v_{(4)} \|_{L^\infty_t L^2_x},
\end{split}
\end{equation}
with the same for ${\rm I}^{(2.2,M)}_2$; this is sufficient for \eqref{Term2-2}. Lastly, as we have done many times already, for $ {\rm I}_{3}^{(2.2,M)} $ we introduce the following modulation decomposition
\begin{equation}
\begin{split}
{\rm I}_{3}^{(2.2,M)}  & \leq {\rm I}_{3}^{(2.2,M)} (P_{N_1}Q_{\gtrsim L}\indl v_1,P_{N_2}\indl v_2, P_{N_3} v_3, P_{N_4} v_4) \\
&\quad + {\rm I}_{3}^{(2.2,M)} (P_{N_1}Q_{\ll L}\indl v_1,P_{N_2}Q_{\gtrsim L}\indl v_2, P_{N_3} v_3, P_{N_4} v_4) \\
&\quad + {\rm I}_{3}^{(2.2,M)} (P_{N_1}Q_{\ll L}\indl v_1,P_{N_2}Q_{\ll L}\indl v_2, P_{N_3}Q_{\gtrsim L} v_3, P_{N_4} v_4) \\
& \quad+ {\rm I}_{3}^{(2.2,M)} (P_{N_1}Q_{\ll L}\indl  v_1,P_{N_2}Q_{\ll L}\indl v_2, P_{N_3}Q_{\ll L} v_3, P_{N_4}Q_{\gtrsim L} v_4) \\ 
& =: {\rm I}_{3,1}^{(2.2,M)} + {\rm I}_{3,2}^{(2.2,M)}+ {\rm I}_{3,3}^{(2.2,M)}+ {\rm I}_{3,4}^{(2.2,M)}.
\end{split}
\end{equation}
We assume the case $N_{(1)} \sim N_1 \sim N_4 \gtrsim N_2,N_3$.  The
alternative ordering $N_{(1)} \sim N_2\sim N_3 \gtrsim N_1,N_4$ follows by
exchanging the two frequency pairs.
By H\"older's inequality, \eqref{indL2Bound}, $L := N_{(1)}^{\alpha-1} M \gg R$, Lemma \ref{QLlow}, and Lemma \ref{SobolevMult},
\begin{equation}
\begin{split}
{\rm I}^{(2.2,M)}_{3,1} & \les \|\indl\|_{L^2_t} \|P_{N_1}Q_{\gtrsim L}v_1 \|_{L^2_{tx}}\|P_{M}(P_{N_2}v_2 P_{N_3}v_3 ) \|_{L^\infty_{tx}} \|P_{N_4} v_4 \|_{L^\infty_t L^2_x} \\
& \les T^\frac12 N_{(1)}^{1-\alpha} \|P_{N_1}v_1\|_{X^{0,1}} \|P_{N_2}v_2 P_{N_3}v_3 \|_{L^\infty_t H^{-\frac12}} \|P_{N_4}v_4\|_{L^\infty_t L^2_x} \\
& \les T^\frac{1}{2}N_{(1)}^{1-\alpha}N_{(3)}^{-\frac12}N_{(4)}^{\frac12 +}\|P_{N_1}v_1 \|_{X^{0,1}}\|P_{N_2}v_2\|_{L^\infty_t L^2_x} \|P_{N_3}v_3\|_{L^\infty_t L^2_x} \|P_{N_4}v_4\|_{L^\infty_t L^2_x}.
\end{split}
\end{equation}
Furthermore
\begin{equation}
\begin{split}
{\rm I}^{(2.2,M)}_{3,2} &\les \|\indl\|_{L^2_t}\|P_{N_1}v_1 \|_{L^\infty_t L^2_x} \|P_{M}(P_{N_2}Q_{\gtrsim L}v_2 P_{N_3}v_3 ) \|_{L^2_{t}L^\infty_x} \|P_{N_4}v_4 \|_{L^\infty_t L^2_x} \\
& \les T^\frac12 M\|P_{N_1}v_1 \|_{L^\infty_t L^2_x} \|P_{N_2}Q_{\gtrsim L}v_2 P_{N_3}v_3  \|_{L^2_{t}H^{-\frac12}} \|P_{N_4}v_4 \|_{L^\infty_t L^2_x} \\
& \les T^\frac12 MN_{(3)}^{-\frac12} N_{(4)}^{\frac12 +}\|P_{N_1}v_1 \|_{L^\infty_t L^2_x} \|P_{N_2}Q_{\gtrsim L}v_2\|_{L^2_{tx}} \| P_{N_3}v_3  \|_{L^\infty_t L^2_x} \|P_{N_4}v_4 \|_{L^\infty_t L^2_x} \\
& \les T^\frac{1}{2}N_{(1)}^{1-\alpha}N_{(3)}^{-\frac12}N_{(4)}^{\frac12 +}\|P_{N_1}v_1 \|_{L^\infty_t L^2_x}\|P_{N_2}v_2\|_{X^{0,1}} \|P_{N_3}v_3\|_{L^\infty_t L^2_x} \|P_{N_4}v_4\|_{L^\infty_t L^2_x}.
\end{split}
\end{equation}
Both bounds are sufficient for \eqref{Term2-2}.  For
${\rm I}_{3,3}^{(2.2,M)}$ and ${\rm I}_{3,4}^{(2.2,M)}$, the same calculation
places the $X^{0,1}$ norm on $v_3$ and $v_4$, respectively.  This completes
the proof of \eqref{Term2-2}.

\medskip 

\subsection{Second term estimate}~\\

We now consider the second term, \eqref{DiffTerm2}. After frequency decomposition, we are required to estimate
\begin{equation} \label{term2}
{\rm I\!I} :=\sum_{N \gg 1} \sum_{N_1,N_2,N_3,N_4} 2N^{2(s-1)}\Im \bigg[ \int_0^T \int_\T P_{N_1}u P_{N_2}\bar{w} P_{N_3}v D_x^{2\beta}P_N^2 P_{N_4}\bar{w} \, dxdt\bigg].
\end{equation}
Note that no symmetry can be abused to give rise to any commutators, though since $u$ and $v$ play symmetric roles we need not distinguish between them. We still subdivide into the following cases
\begin{itemize}
    \item {\it Case 1}: $|\Omega| \gtrsim \Nmax^\alpha$
    \item {\it Case 2}: $1\le |\Omega| \ll \Nmax^{\alpha} $
\end{itemize}
For {\it Case 1}, the estimate \eqref{Term1-1} applies after permuting the
four inputs so that the two $w$ factors occupy the third and fourth slots.
Multiplying by the Sobolev weights in \eqref{term2} gives the same dyadic
sum as in \eqref{DiffTerm2Dyadic}, with the stronger factor
$N_{(1)}^{-\alpha}$ in place of $N_{(1)}^{1-\alpha}$.  Hence the summation
below also controls {\it Case 1}.
As for {\it Case 2}, we may assume that $N_{(4)} \ll N_{(1)}$ in view of \eqref{DiffCase1}, furthermore we may subdivide into two cases
\[ |k_1 - k_2| \ll N_{(1)} \text{ and } |k_2 - k_3| \gtrsim N_{(1)}\]
\[ |k_1 - k_2| \gtrsim N_{(1)} \text{ and } |k_2 - k_3| \ll N_{(1)}.\]
The two subcases are symmetric after interchanging $u$ and $v$, so it is
enough to consider the second one.  Applying \eqref{Term2-2}, with the
indices permuted to match the sign pattern in \eqref{term2}, gives
\begin{equation}\label{DiffTerm2Dyadic}
\begin{split}
|{\rm I\!I}|
&\lesssim T^\theta
\sum_{\substack{N_1,N_2,N_3,N_4\ge1\\N_{(1)}\sim N_{(2)}}}
N_4^{s-1+2\beta}N_2^{1-s}N_1^{-s}N_3^{-s}
N_{(1)}^{1-\alpha+2\beta}
N_{(3)}^{-1/2}N_{(4)}^{1/2+} \\
&\qquad\times
\|P_{N_1}u\|_{Z_T^s}\|P_{N_3}v\|_{Z_T^s}
\Big(
\|P_{N_2}w\|_{Z_T^{s-1}}\|P_{N_4}w\|_{L^\infty_TH^{s-1}}
+ \|P_{N_2}w\|_{L^\infty_TH^{s-1}}
  \|P_{N_4}w\|_{Z_T^{s-1}}
\Big).
\end{split}
\end{equation}
We briefly verify the summation. If $N_4\sim N_{(1)}$, the two largest
frequencies and the condition $|k_2 - k_3| =|k_4 - k_1| \ll N_{(1)}$ force $N_1 \sim N_4 \sim N_{(1)}$. If furthermore $N_2 \sim N_{(1)}$ the same condition would imply $N_3 \sim N_{(1)}$ and we return to the special case \eqref{DiffCase1}. 
Otherwise the frequency contribution
is bounded by
\begin{equation}
N_{(1)}^{4\beta - \alpha} N_2^{1-s} N_{(3)}^{-\frac12} N_{(4)}^{\frac12 - s+} \lesssim N_{(1)}^{4\beta - \alpha} N_{(3)}^{\frac12 -s}N_{(4)}^{\frac12- s+},
\end{equation}
which is summable for $s > 1/2$ and $\alpha \geq 4\beta$.
If $N_4\ll N_{(1)}$, then $|k_4 - k_1| \ll N_{(1)}$ forces $N_1 \ll N_{(1)}$. Hence $N_2 \sim N_3 \sim N_{(1)}$; collecting factors gives
\[ N_{(1)}^{2\beta + 2 - \alpha -2s} N_4^{s-1 + 2\beta}N_1^{-s} N_{(3)}^{-\frac12} N_{(4)}^{\frac12 +}.\]
If $N_{(4)} \sim N_1$ and $N_{(3)} \sim N_4$, summing first in $N_{(4)}$ and then in $N_{(3)}$
\[
N_{(1)}^{2-2s-\alpha+2\beta}
N_{(3)}^{s-\frac32+2\beta}.
\]
When $s<\frac32-2\beta$, this is summable under
$s\ge\beta+1-\alpha/2$; when $s\ge\frac32-2\beta$, it is summable under
$s>1/2$ and $4\beta\le\alpha$. Otherwise, if $N_{(4)} \sim N_4$ and $N_{(3)} \sim N_1$ we get, provided $s \geq \beta + 1-\frac\alpha2$,
\[ N_{(1)}^{2\beta + 2-\alpha -2s}N_{(3)}^{\frac12 - s}N_{(4)}^{\frac12 -s+} N_{(3)}^{-1} N_{(4)}^{2(s+\beta)-1} \les  N_{(4)}^{4\beta -\alpha}N_{(3)}^{\frac12 - s}N_{(4)}^{\frac12 -s+}, \]
which again is summable for $s > 1/2$ and $\alpha \geq 4\beta$.
Thus \eqref{DiffTerm2Dyadic} is bounded by
\[
T^\theta
\|w\|_{L^\infty_TH^{s-1}}\|w\|_{Z_T^{s-1}}
\big(\|u\|_{Z_T^s}+\|v\|_{Z_T^s}\big)^2.
\]
Combining this with the estimates for \eqref{DiffTerm1}, and enlarging the
last factor harmlessly, proves the proposition.
\end{proof}

\section{Proof of Theorem \ref{MainTheorem}}
\label{sec-7}

 In this section, we use the estimates derived in the previous sections to prove the unconditional local well-posedness result stated in Theorem \ref{MainTheorem}.
 The argument follows the frequency-envelope compactness framework in
 \cite{MolinetTanakaUncond} and the references therein.  We include the
 details needed for the present equation.

When $\beta=0$, the result follows directly from the Duhamel formula:
$H^s(\T)$ is an algebra for $s>1/2$, the linear group is unitary on $H^s$,
and the cubic nonlinearity is locally Lipschitz on $H^s$. A standard
contraction in $C([0,T];H^s)$ therefore gives existence, unconditional
uniqueness, and continuous dependence. In the remainder of this section we
assume $\beta>0$.

\begin{proof}[Proof of local existence and strong continuity]
Fix $u_0\in H^s$ and set $A=\|u_0\|_{H^s}$. Let $\Pi_K$ denote the
Fourier projection onto $|k|\le K$, and consider the Galerkin system
\begin{equation}\label{Galerkin}
\begin{cases}
i\partial_tu_K+D_x^\alpha u_K
=\nu\Pi_KD_x^{2\beta}(|u_K|^2u_K),\\
u_K(0)=\Pi_Ku_0.
\end{cases}
\end{equation}
This is a finite-dimensional ODE and therefore has a unique smooth solution
on a maximal interval. Since all multipliers in the preceding estimates
commute with $\Pi_K$, Propositions \ref{Apriori} and \ref{DiffEstimate}, as
well as Lemma \ref{BourgainEstimates}, hold uniformly for \eqref{Galerkin}.

With $\omega_N\equiv1$, these estimates give, for some $\theta>0$,
\begin{equation}\label{GalerkinBootstrap}
\|u_K\|_{L^\infty_TH^s}^2
\le A^2+
CT^\theta\|u_K\|_{L^\infty_TH^s}^4
\big(1+\|u_K\|_{L^\infty_TH^s}^2\big)^6.
\end{equation}
Choose $T=T(A)\le1$ so that
\[
CT^\theta(2A+1)^2\big(1+(2A+1)^2\big)^6\le\frac14.
\]
A standard continuity argument in \eqref{GalerkinBootstrap} then gives
\begin{equation}\label{uniformHsBound}
\sup_K\|u_K\|_{L^\infty_TH^s}\le 2A+1.
\end{equation}
In particular, no Galerkin solution can cease to exist before $T$.

Choose an admissible frequency envelope $\omega_N\to\infty$ such that
$u_0\in H^s_\omega$; see \cite[Lemma 4.1]{KT-03}. Repeating the preceding
bootstrap with Proposition \ref{Apriori} in $H^s_\omega$, while using
\eqref{uniformHsBound} for all unweighted factors, yields
\begin{equation}\label{uniformWeightedBound}
\sup_K\|u_K\|_{L^\infty_TH^s_\omega}
\lesssim_{A,T}\|u_0\|_{H^s_\omega}.
\end{equation}

For completeness, we record the approximation point needed below. Applying
the proof of Proposition \ref{DiffEstimate} to two Galerkin solutions gives
\begin{equation}\label{GalerkinDifference}
\begin{split}
\|u_K-u_L\|_{L^\infty_TH^{s-1}}^2
&\lesssim \|\Pi_Ku_0-\Pi_Lu_0\|_{H^{s-1}}^2 +T^\theta C_A
\|u_K-u_L\|_{L^\infty_TH^{s-1}}^2+\varepsilon_{K,L},
\end{split}
\end{equation}
where $\varepsilon_{K,L}\to0$ as $K,L\to\infty$. Indeed, after subtracting
the two Galerkin equations, the new terms contain
$(\Pi_K-\Pi_L)D_x^{2\beta}(|u_J|^2u_J)$ for $J=K$ or $L$. In every dyadic
estimate for such a term the output frequency is $\gtrsim K\wedge L$, and
the relation $k_1-k_2+k_3=k_4$ then forces at least one input frequency to
be $\gtrsim K\wedge L$. By \eqref{uniformWeightedBound}, the corresponding
$H^s$ factor is $O(\omega_{K\wedge L}^{-1})$. Thus the same summations as in
Proposition \ref{DiffEstimate} give
$\varepsilon_{K,L}\lesssim_A\omega_{K\wedge L}^{-2}$.
After decreasing $T(A)$ so that the second term on the right of
\eqref{GalerkinDifference} can be absorbed, $(u_K)$ is Cauchy in
$C([0,T];H^{s-1})$.

The weighted estimate upgrades this convergence to the endpoint space.
For every dyadic $M$,
\[
\|u_K-u_L\|_{L^\infty_TH^s}
\lesssim M\|u_K-u_L\|_{L^\infty_TH^{s-1}}
+\omega_M^{-1}
\sup_J\|u_J\|_{L^\infty_TH^s_\omega}.
\]
First let $K,L\to\infty$ and then $M\to\infty$. Hence $u_K$ converges to
some $u$ in $C([0,T];H^s)$. Since $s>1/2$, the cubic map is continuous on
$H^s$, and passing to the limit in \eqref{Galerkin} shows that $u$ satisfies
\eqref{MMTEq} in the distributional sense and that $u(0)=u_0$. This proves
both existence and strong $H^s$ continuity.
\end{proof}

\begin{proof}[Proof of unconditional uniqueness]
Let $u,v\in L^\infty([0,T];H^s)$ be two solutions with the same initial
data, and put $w=u-v$. Lemma \ref{BourgainEstimates} gives
$u,v\in Z_T^s$ and
\[
\|w\|_{Z_\tau^{s-1}}\lesssim_{u,v}\|w\|_{L^\infty_\tau H^{s-1}}
\qquad (0<\tau\le T).
\]
Proposition \ref{DiffEstimate} therefore implies
\[
\|w\|_{L^\infty_\tau H^{s-1}}^2
\le C_{u,v}\tau^\theta\|w\|_{L^\infty_\tau H^{s-1}}^2.
\]
Choosing $\tau>0$ so that $C_{u,v}\tau^\theta<1$ gives $w=0$ on
$[0,\tau]$. The estimates are invariant under time translation, and the
relevant norms remain bounded on $[0,T]$; finitely many repetitions cover
the whole interval.
\end{proof}

\begin{proof}[Proof of continuity with respect to initial data]
Let $u_0^n\to u_0$ in $H^s$, and let $u_n,u$ be the corresponding
solutions on a common interval $[0,T]$, where $T$ depends only on a uniform
$H^s$ bound for the data. The set
\[
\mathcal K=\{u_0\}\cup\{u_0^n:n\ge1\}
\]
is compact in $H^s$. Consequently there is a single admissible envelope
$\omega_N\to\infty$ such that
\begin{equation}\label{CommonEnvelope}
\sup_{f\in\mathcal K}\|f\|_{H^s_\omega}<\infty;
\end{equation}
see \cite[Lemma 4.1]{KT-03} and
\cite[Lemma 4.6]{MolinetTanakaUncond}. The weighted {\it a priori} estimate and
the bootstrap above imply
\begin{equation}\label{HfreqEnvelopeBound}
\sup_{n\ge1}\|u_n\|_{L^\infty_TH^s_\omega}
+\|u\|_{L^\infty_TH^s_\omega}\le C.
\end{equation}
On the other hand, Proposition \ref{DiffEstimate} and
Lemma \ref{BourgainEstimates} give
$u_n\to u$ in $C([0,T];H^{s-1})$. Therefore, for every dyadic $M$,
\[
\|u_n-u\|_{L^\infty_TH^s}
\lesssim
M\|u_n-u\|_{L^\infty_TH^{s-1}}
+\frac{C}{\omega_M}.
\]
Letting first $n\to\infty$ and then $M\to\infty$ proves convergence in
$C([0,T];H^s)$ and hence continuity of the solution map.
\end{proof}

\bigskip
\subsection*{Acknowledgment} The first author would like to thank FAPESP Brazil for financial support under grant (\#2024/10613-4) and the School of Mathematics, University of Birmingham, UK, for hospitality where this work was developed. Y.W. was supported by the EPSRC Mathematical Sciences Small Grant (grant no. UKRI1116).\\


\noindent
{\bf Conflict of interest statement.} 
On behalf of all authors, the corresponding author states that there is no conflict of interest.\\

\noindent 
{\bf Data availability statement.} 
The datasets generated during and/or analyzed during the current study are available from the corresponding author on reasonable request.



\begin{thebibliography}{99}


\bibitem{FracLeib} A. B\'enyi, T. Oh,  T. Zhao; {\em Fractional Leibniz rule on the torus,}  Proc. Amer. Math.
Soc., {\bf 153} (1) (2025) 207--221.

\bibitem{Bourgain-93}  J.  Bourgain; {\em  Fourier transform restriction phenomena for certain lattice subsets and applications to
nonlinear evolution equations i. {S}chr\"odinger equations,}  Geom. Funct. Anal., {\bf 3} (2) (1993) 107--156.

\bibitem{BLLZ-23} E. Brun, G. Li, R. Liu, Y. Zine {\em Global well-posedness of one-dimensional cubic fractional nonlinear Schr\"odinger equations in negative Sobolev spaces,}
arXiv:2311.13370

\bibitem{YGSS-WellandIll} Y. Cho, G. Hwang, S. Kwon, S. Lee.;{\em Well-posedness and ill-posedness for the cubic fractional {S}chr\"odinger equations,} Discrete Contin. Dyn. Syst. {\bf 35} (7) (2015) 2863--2880

\bibitem {DET-ExistTheoryfNLS}  S. Demirbas, M. Erdo\u gan, N. Tzirakis {\em Existence and uniqueness theory for the fractional {S}chr\"odinger equation on the torus,} Some topics in harmonic analysis and applications. {Adv. Lect. Math. (ALM)} {\bf 34} (2016) 145–-162

\bibitem{FS-22} J. Forlano, K. Seong; {\em Transport of {G}aussian measures under the flow of one-dimensional fractional nonlinear {S}chr\"odinger equations,} Comm. Partial Differential Equations {\bf 47} (6) (2022) 1296--1337

\bibitem{GTV} J. Ginibre, Y. Tsutsumi, G. Velo; {\em On the Cauchy problem for the Zakharov system,} J. Funct. Anal. {\bf 151} (2) (1997) 384--436.

\bibitem{HIKK-10} S. Herr, A.D. Ionescu, C.E. Kenig, H. Koch; {\em  A para-differential renormalization technique for non-linear dispersive equations,} Commun. Partial Differ. Equ. {\bf 35} (10) (2010) 1827--1875.

\bibitem{IKT-08} A.D. Ionescu, C.E. Kenig, D. Tataru; {\em Global well-posedness of the initial value problem for the KP-I equation in the energy space,} Invent. Math. {\bf 173} (2) (2008) 265--304.

\bibitem{KKO-25} T. Kato, T. Kondo, M. Okamoto; {\em Well- and Ill-posedness of the Cauchy problem for derivative fractional nonlinear Schrödinger equations on the torus,} 	arXiv:2508.11866

\bibitem{KK-03} C. Kenig, K. Koenig; {\em  On the local well-posedness of the Benjamin-Ono and modified Benjamin-Ono equations,}  Math. Res. Lett. {\bf 10} (5-6) (2003) 879--895.

\bibitem{Kishimoto} N. Kishimoto {\em Unconditional uniqueness of solutions for nonlinear dispersive equations,} arXiv:1911.04349

\bibitem{KT-07} H. Koch, D. Tataru; {\em  A priori bounds for the 1D cubic NLS in negative Sobolev spaces,}  Int. Math. Res. Not. (16) (2007) rnm053

\bibitem{KT-03} H. Koch, N. Tzvetkov; {\em On the local well-posedness of the Benjamin-Ono equation in $ H^s(\R)$,}  Int. Math. Res. Not. (26) (2003) 1449--1464.

\bibitem{TKMO-26} T. Kondo, M. Okamoto; {\em Norm inflation for quadratic derivative fractional nonlinear Schr\"odinger equations,} J. Evol. Equ. {\bf 26} (2) (2026) Paper No. 74.

\bibitem{KO-25} T. Kondo, M. Okamoto; {\em Well- and ill-posedness of the Cauchy problem for semi-linear Schr\"odinger equations on the torus,} 	arXiv:2501.04205


\bibitem{MMT-97} A. Majda, D. McLaughlin, E.  Tabak; {\em  A one-dimensional model for dispersive wave turbulence,} J. Nonlinear Sci. {\bf 7} (1) (1997) 9--44.


\bibitem{MolinetUncondKdV}  L. Molinet, D. Pilod, S. Vento; {\em On unconditional well-posedness for the periodic modified
Korteweg–de Vries equation,}  J. Math. Soc. Japan, {\bf 71} (1) (2019) 147--201.

\bibitem{MPV-18} L. Molinet, D. Pilod, S. Vento; {\em  On well-posedness for some dispersive perturbations of Burgers’ equation,}  Ann. Inst. Henri Poincaré, Anal. Non Linéaire {\bf 35} (7) (2018) 1719--1756.

\bibitem{MT-25} L. Molinet, T. Tanaka; {\em Local well-posedness for the derivative nonlinear Schr\"odinger equation with nonvanishing boundary conditions,} arXiv:2025.20883v1

\bibitem{MolinetTanakaUncond} L. Molinet, T. Tanaka; {\em Unconditional well-posedness for some nonlinear periodic one-dimensional
dispersive equations,} J. Funct. Anal. {\bf 283} (1) (2022) Paper No. 109490, 45 pp.

\bibitem{MST-01} L. Molinet, J.-C. Saut, N. Tzvetkov; {\em  Ill-posedness issues for the Benjamin-Ono and related equations,} SIAM J. Math. Anal. {\bf 33} (4) (2001) 982--988.

\bibitem{MV-15} L. Molinet, S. Vento; {\em  Improvement of the energy method for strongly nonresonant dispersive equations and applications,} Anal. PDE {\bf 8} (6) (2015) 1455--1495.


\bibitem{PPW-26} M. Panthee, J. Patterson, Y. Wang; {\em On the well-posedness of the initial value problem for the MMT model,}  arXiv:2601.07771

\bibitem{TzvetkozPeriodicKP1} J.-C. Saut,  N. Tzvetkov; {\em  On periodic KP-I type equations,}  Comm. Math. Phys., {\bf 221} (3) (2001) 451--476.

\bibitem{ST-2020} C. Sun, N. Tzvetkov, {\em Gibbs measure dynamics for the fractional {NLS},}
SIAM J. Math. Anal. {\bf 52} (5) (2020) 4638--4704.

\bibitem{HT-99} H. Takaoka; {\em Well-posedness for the one-dimensional nonlinear Schr\"odinger equation with the derivative nonlinearity,} Adv. Differ. Equ., {\bf 4} (4) (1999) 561--580.

\bibitem{TT-04} T. Tao; {\em  Global well-posedness of the Benjamin-Ono equation in $H^1(\R)$,}  J. Hyperbolic Differ. Equ. {\bf 1} (1) (2004) 27--49.

\bibitem{TTaoL2} T.  Tao; {\em Multilinear weighted convolution of $ L^2$-functions, and applications to nonlinear dispersive
equations,}  Amer. J. Math., {\bf 123} (5) (2001) 839--908.

\bibitem{ZDP-04}  V. Zakharov, F. Dias, A. Pushkarev;  {\em One-dimensional wave turbulence,} Phys. Rep. {\bf 398} (1) (2004) 1--65.

\bibitem{ZGPD-01}  V. Zakharov, P. Guyenne, A. Pushkarev, F. Dias;  {\em Wave turbulence in one-dimensional models,}  Phys. D. {\bf 152/153} (2001) 573--619.



\end{thebibliography}
\end{document}